          \newcommand{\rig}{\mathsf{rig}} \newcommand{\chop}{\mathsf{chop}} 
\newcommand{\shatter}{\mathsf{shatter}} \newcommand{\shards}{\mathsf{shards}} \newcommand{\len}{\mathrm{len}}
 \newcommand{\vcong}{\mathfrak{v}}  \newcommand{\cross}{\chi}   \newcommand{\matousek}{Matou{\v{s}}ek } \newcommand{\spread}{\mathfrak{s}} \newcommand{\sobs}{\mathfrak{s}_{\mathrm{obs}}} \newcommand{\csobs}{\bar{\mathfrak{s}}_{\mathrm{obs}}} \newcommand{\cspread}{\bar{\mathfrak{s}}}
\def\stocmode{0} \def\jamesmode{0} \def\arxivmode{0} \def\fastmode{0}   \def\showauthornotes{0}  \def\showkeys{0} \def\showdraftbox{1} \def\showcolorlinks{1} \def\usemicrotype{1} \def\showfixme{1}
\newcommand{\vvS}{\vvmathbb{S}} \newcommand{\vvB}{\vvmathbb{B}} \else \newcommand{\vvS}{\mathbb{S}} \newcommand{\vvB}{\mathbb{B}} \fi
\newtheorem{theorem}{Theorem}[section]
\newtheorem*{theorem*}{Theorem}
\newtheorem*{proposition*}{Proposition}
\newtheorem{lemma}[theorem]{Lemma}
\newtheorem*{lemma*}{Lemma}
\newtheorem{corollary}[theorem]{Corollary}
\newtheorem*{conjecture*}{Conjecture}
\newtheorem{fact}[theorem]{Fact}
\newtheorem*{fact*}{Fact}
\newtheorem*{exercise*}{Exercise}
\newtheorem*{hypothesis*}{Hypothesis}
\theoremstyle{definition}
\newtheorem{definition}[theorem]{Definition}
\newtheorem{example}[theorem]{Example}
\newtheorem{question}[theorem]{Question}
\newtheorem{exercise-easy}[theorem]{Exercise}
\newtheorem{exercise-med}[theorem]{Exercise}
\newtheorem{exercise-hard}[theorem]{Exercise$^\star$}
\newtheorem{claim}[theorem]{Claim}
\newtheorem*{claim*}{Claim}
\newtheorem{remark}[theorem]{Remark}
\newtheorem*{remark*}{Remark}
\newtheorem*{observation*}{Observation}
\let\mathbb\varmathbb
\newcommand{\savehyperref}[2]{\texorpdfstring{\hyperref[#1]{#2}}{#2}}
\newcommand{\Sref}[1]{\hyperref[#1]{\S\ref*{#1}}}
\newcommand{\Authornote}[2]{{\sffamily\small\color{blue}{[#1: #2]}}\medskip}
\newcommand{\Authornotecolored}[3]{{\sffamily\small\color{#1}{[#2: #3]}}}
\newcommand{\Authorcomment}[2]{{\sffamily\small\color{gray}{[#1: #2]}}}
\newcommand{\Authorstartcomment}[1]{\sffamily\small\color{gray}[#1: }
\newcommand{\Authorfnote}[2]{\footnote{\color{red}{#1: #2}}}
\newcommand{\Authorfixme}[1]{\Authornote{#1}{\textbf{??}}}
\newcommand{\Authormarginmark}[1]{\marginpar{\textcolor{red}{\fbox{\Large #1:!}}}}
\newcommand{\Authornote}[2]{}
\newcommand{\Authornotecolored}[3]{}
\newcommand{\Authorcomment}[2]{}
\newcommand{\Authorstartcomment}[1]{}
\newcommand{\Authorfnote}[2]{}
\newcommand{\Authorfixme}[1]{}
\newcommand{\Authormarginmark}[1]{}
\newcommand{\Esymb}{\mathbb{E}}
\newcommand{\Psymb}{\mathbb{P}}
\DeclareMathOperator*{\E}{\Esymb}
\DeclareMathOperator*{\ProbOp}{\Psymb}
\renewcommand{\Pr}{\ProbOp}
\newcommand{\textparen}[1]{\text{(#1)}}
\newcommand{\because}[1]{\textparen{because #1}}
\renewcommand{\because}[1]{\textparen{because #1}}
\newcommand{\seteq}{\mathrel{\mathop:}=}
\newcommand\bdot\bullet
\newcommand{\Ind}{\mathbb I}
\newcommand{\Ind}{\mathds 1}
\DeclareMathOperator{\vol}{vol}
\DeclareMathOperator{\argmax}{argmax}
\DeclareMathOperator{\supp}{supp}
\DeclareMathOperator{\dist}{dist}
\newcommand{\Z}{\mathbb Z}
\newcommand{\N}{\mathbb N}
\newcommand{\R}{\mathbb R}
\newcommand{\cB}{\mathcal B}
\newcommand{\cF}{\mathcal F}
\newcommand{\cH}{\mathcal H}
\newcommand{\cL}{\mathcal L}
\newcommand{\cP}{\mathcal P}
\newcommand{\cT}{\mathcal T}
\renewcommand{\leq}{\leqslant}
\renewcommand{\geq}{\geqslant}
\let\epsilon=\varepsilon
\numberwithin{equation}{section}
\newcommand\MYcurrentlabel{xxx}
\newcommand{\MYstore}[2]{%
  \global\expandafter \def \csname MYMEMORY #1 \endcsname{#2}%
}
\newcommand{\MYload}[1]{%
  \csname MYMEMORY #1 \endcsname%
}
\newcommand{\MYnewlabel}[1]{%
  \renewcommand\MYcurrentlabel{#1}%
  \MYoldlabel{#1}%
}
\newcommand{\MYdummylabel}[1]{}
\newcommand{\torestate}[1]{%
  % overwrite label command
  \let\MYoldlabel\label%
  \let\label\MYnewlabel%
  #1%
  \MYstore{\MYcurrentlabel}{#1}%
  % restore old label command
  \let\label\MYoldlabel%
}
\newcommand{\restatetheorem}[1]{%
  % overwrite label command with dummy
  \let\MYoldlabel\label
  \let\label\MYdummylabel
  \begin{theorem*}[Restatement of \prettyref{#1}]
    \MYload{#1}
  \end{theorem*}
  \let\label\MYoldlabel
}
\newcommand{\restatelemma}[1]{%
  % overwrite label command with dummy
  \let\MYoldlabel\label
  \let\label\MYdummylabel
  \begin{lemma*}[Restatement of \prettyref{#1}]
    \MYload{#1}
  \end{lemma*}
  \let\label\MYoldlabel
}
\newcommand{\restateprop}[1]{%
  % overwrite label command with dummy
  \let\MYoldlabel\label
  \let\label\MYdummylabel
  \begin{proposition*}[Restatement of \prettyref{#1}]
    \MYload{#1}
  \end{proposition*}
  \let\label\MYoldlabel
}
\newcommand{\restatefact}[1]{%
  % overwrite label command with dummy
  \let\MYoldlabel\label
  \let\label\MYdummylabel
  \begin{fact*}[Restatement of \prettyref{#1}]
    \MYload{#1}
  \end{fact*}
  \let\label\MYoldlabel
}
\newcommand{\restate}[1]{%
  % overwrite label command with dummy
  \let\MYoldlabel\label
  \let\label\MYdummylabel
  \MYload{#1}
  \let\label\MYoldlabel
}
\newcommand{\addreferencesection}{
  \phantomsection
\ifnum\stocmode=0
  \addcontentsline{toc}{section}{References}
\else
  \addcontentsline{toc}{section}{References \hspace*{1in} --------- End of extended abstract ---------}
\fi

}
\newcommand{\e}{\epsilon}
\let\origparagraph\paragraph
\renewcommand{\paragraph}[1]{\vspace*{-15pt}\origparagraph{#1.}}
\let\pref=\prettyref
\newcommand{\diam}{\mathrm{diam}}
\renewcommand{\Ind}{\vvmathbb{1}}
\renewcommand{\Ind}{\bm{1}}
\newcommand\f{\varphi}
\newcommand*{\cut}{\mathsf{cut}}
\newcommand{\Indc}{\vvmathbb{I}} \else \newcommand{\Indc}{\mathbb{I}} \fi
\begin{document}

\title{Separators in region intersection graphs}

\author{James R. Lee\footnote{University of Washington.  Partially supported by NSF CCF-1407779.}} \date{}

\maketitle

\begin{abstract} For undirected graphs $G=(V,E)$ and $G_0=(V_0,E_0)$, say that $G$ is a {\em region intersection graph over $G_0$} if there is a family of connected subsets $\{ R_u \subseteq V_0 : u \in V \}$  of $G_0$ such that $\{u,v\} \in E \iff R_u \cap R_v \neq \emptyset$.

We show if $G_0$ excludes the complete graph $K_h$ as a minor for some $h \geq 1$, then every region intersection graph $G$ over $G_0$ with $m$ edges has a balanced separator with at most $c_h \sqrt{m}$ nodes, where $c_h$ is a constant depending only on $h$. If $G$ additionally has uniformly bounded vertex degrees, then such a separator is found by spectral partitioning.

A string graph is the intersection graph of continuous arcs in the plane.
String graphs are precisely region intersection graphs over planar graphs.
Thus the preceding result implies that every string graph with $m$ edges has a balanced separator of size $O(\sqrt{m})$. This bound is optimal, as it generalizes the planar separator theorem. It confirms a conjecture of Fox and Pach (2010), and improves over the $O(\sqrt{m} \log m)$ bound of Matou{\v{s}}ek (2013). \end{abstract}

\tableofcontents

\section{Introduction}

Consider an undirected graph $G_0=(V_0,E_0)$. A graph $G=(V,E)$ is said to be a {\em region intersection graph (rig) over $G_0$} if the vertices of $G$ correspond to connected subsets of $G_0$ and there is an edge between two vertices of $G$ precisely when those subsets intersect. Concretely, there is a family of connected subsets $\{R_u \subseteq V_0 : u \in V\}$ such that $\{u,v\} \in E \iff R_u \cap R_v \neq \emptyset$. For succinctness, we will often refer to $G$ as a {\em rig over $G_0$.}

Let $\rig(G_0)$ denote the family of all finite rigs over $G_0$. Prominent examples of such graphs include the intersection graphs of pathwise-connected regions on a surface (which are intersection graphs over graphs that can be drawn on that surface).

For instance, {\em string graphs} are the intersection graphs of continuous arcs in the plane. It is easy to see that every finite string graph $G$ is a rig over some planar graph: By a simple compactness argument, we may assume that every two strings intersect a finite number of times.  Now consider the planar graph $G_0$ whose vertices lie at the intersection points of strings and with edges between two vertices that are adjacent on a string (see \pref{fig:strings}). Then $G \in \rig(G_0)$.
It is not too difficult to see that the converse is also true; see
\pref{lem:planar-rigs}.

To illustrate the non-trivial nature of such objects, we recall that there are string graphs on $n$ strings that require $2^{\Omega(n)}$ intersections in any such representation \cite{KM91}. The recognition problem for string graphs is NP-hard \cite{Kra91}. Decidability of the recognition problem was established in \cite{SS04}, and membership in NP was proved in \cite{SSS03}. We refer to the recent survey \cite{Mat15} for more of the background and history behind string graphs.

Even when $G_0$ is planar, the rigs over $G_0$ can be dense:  Every complete graph is a rig over some planar graph (in particular, every complete graph is a string graph). It has been conjectured by Fox and Pach \cite{FP10} that every $m$-edge string graph has a balanced separator with $O(\sqrt{m})$ nodes. Fox and Pach proved that such graphs have separators of size $O(m^{3/4} \sqrt{\smash[b]{\log m}})$ and presented a number of applications of their separator theorem. Matou{\v{s}}ek \cite{Mat14} obtained a near-optimal bound of $O(\sqrt{m} \log m)$.
In the present work, we confirm the conjecture of Fox and Pach, and generalize the result to include all rigs over graphs that exclude a fixed minor.

\begin{theorem}\label{thm:main} If $G \in \rig(G_0)$ and $G_0$ excludes $K_h$ as a minor, then $G$ has a $\frac23$-balanced separator of size at most $c_h \sqrt{m}$ where $m$ is the number of edges in $G$. Moreover, one has the estimate $c_h \leq O(h^3 \sqrt{\smash[b]\log\, h})$. \end{theorem}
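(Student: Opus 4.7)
The plan is to transfer a separator from $G_0$ to $G$ by a standard lifting. Pick once and for all a representative $r_u \in R_u$ for each $u \in V$. For any vertex cut $S_0 \subseteq V_0$, define $S := \{u \in V : R_u \cap S_0 \neq \emptyset\}$; then $S$ is a vertex cut of $G$, because every region $R_u$ with $R_u \cap S_0 = \emptyset$ is connected and hence lies in the unique component of $G_0 \setminus S_0$ containing $r_u$, while regions sitting in distinct components cannot intersect. Consequently $S$ is $\tfrac{2}{3}$-balanced in $G$ whenever $S_0$ is $\tfrac{2}{3}$-balanced in $G_0$ with respect to the vertex weight $w_r(v) := |\{u : r_u = v\}|$. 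Moreover $|S| \leq \sum_{v \in S_0} w(v)$ where $w(v) := |\{u : v \in R_u\}|$, so the main task becomes: in the $K_h$-minor-free graph $G_0$, find a $w_r$-balanced vertex cut of $w$-weight $O(c_h \sqrt{m})$.

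To produce this weighted cut I would combine the Alon--Seymour--Thomas separator theorem for $K_h$-minor-free graphs with a multi-commodity flow / sparsest-vertex-cut duality. The routing is canonical: for each edge $\{u,v\} \in E(G)$, the connectivity of $R_u \cup R_v$ furnishes a path in $G_0$ from $r_u$ to $r_v$, and the total vertex congestion of these $m$ paths is controlled by $\sum_u \deg_G(u) \cdot |R_u|$ (times the flow/cut gap in $G_0$). By flow/cut duality this routing certifies a sparse vertex cut of small $w$-weight in $G_0$; a standard region-growing step converts sparsity into $\tfrac{2}{3}$-balance (losing only a constant), producing $S_0$ with $\sum_{v \in S_0} w(v) \leq c_h \sqrt{m}$.

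The principal obstacle is bounding the routing congestion, equivalently bounding $\sum_u |R_u|$ in terms of $m$, since a priori neither $|R_u|$ nor $|V_0|$ is controlled by $m$. I would address this by first normalizing the representation: replace each $R_u$ by a spanning tree of the induced subgraph $G_0[R_u]$, and then suppress every vertex of $V_0$ that lies in a unique such tree and has tree-degree at most two, merging its incident tree-edges. After normalization, every $v \in V_0$ is either an intersection vertex (so $w(v) \geq 2$) or a branch vertex of some tree, and a double-count of intersection incidences against the density bound $|E(G_0)| = O(h\sqrt{\log h}) \cdot |V_0|$ for $K_h$-minor-free graphs yields $\sum_u |R_u| = O_h(m)$ up to polynomial-in-$h$ factors. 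Substituting into the flow/cut estimate and tracking the losses should produce the separator with $c_h = O(h^3 \sqrt{\log h})$ as claimed.
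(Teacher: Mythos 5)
There are two genuine gaps here, and each is fatal as stated. First, the flow/cut duality is applied in the wrong direction, and to the wrong demand. Exhibiting a feasible low-congestion routing of some demand in $G_0$ certifies a \emph{lower} bound on every cut separating that demand; to conclude that a sparse (and hence, after region-growing, balanced) vertex cut \emph{exists}, you must upper-bound the maximum concurrent flow, i.e.\ prove that \emph{every} routing of the all-pairs demand between the representatives $\{r_u\}$ (weighted by $w_r$) incurs large congestion relative to the capacities $w$. Your single canonical routing of the $m$ edge-demands proves nothing in that direction, and the edge demand is in any case not the demand relevant to balanced separation. Worse, the impossibility statement you would need in $G_0$ is false in the $\ell_\infty$ sense: a single vertex of $G_0$ may lie in many regions, and the paper explicitly notes that the push-down of a flow from $G$ to $G_0$ does \emph{not} preserve $\ell_\infty$ congestion. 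This is precisely why the argument (following Matou\v{s}ek) must pass through the $\ell_2$/crossing congestion: any all-pairs flow in $G$ with $\ell_\infty$ congestion $C$ pushes down to a flow in $G_0$ with crossing congestion at most $(4m+n)C^2$ (\pref{thm:mat}, \pref{claim:crossup}), and the lower bound $\mathrm{cross}^*_{G_0}(K_n)\gtrsim n^4/(h^2\log h)$ from \cite{BLR08} then forces $C\gtrsim n^2/(h\sqrt{m\log h})$. Note also that the paper never cuts $G_0$ at all: the separator is found inside $G$ by optimizing a conformal (vertex-weighted) metric on $G$ and rounding it via a KPR-style random partition, with $G_0$ entering only through the forbidden-structure statement that a careful $K_h$-minor in $G$ yields a $K_h$-minor in $G_0$ (\pref{lem:careful-minor}).

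Second, the normalization step you rely on --- that after pruning one may assume $\sum_u |R_u| = O_h(m)$ up to $\mathrm{poly}(h)$ factors --- is false. By \cite{KM91} there are string graphs on $n$ strings for which every representation requires $2^{\Omega(n)}$ intersection points; equivalently, every representation of such a graph as a rig over a planar graph has $2^{\Omega(n)}$ vertices $v$ with $w(v)\geq 2$, so $\sum_u |R_u| \geq 2^{\Omega(n)} \gg \mathrm{poly}(m)$ no matter how you prune. Your double-counting argument bounds the number of branch vertices and of vertices lying in a unique region, but the number of \emph{intersection} vertices is controlled by $\sum_{\{u,u'\}\in E}|R_u\cap R_{u'}|$, and adjacent regions may be forced to intersect in exponentially many vertices. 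Consequently the routing congestion, the capacities $w$, and the quantity $\sum_v w(v)^2$ are all uncontrollable in terms of $m$, and any approach that charges the separator's cost to $\sum_{v\in S_0} w(v)$ for a cut $S_0\subseteq V_0$ runs into the same obstruction. Any repair essentially has to abandon cutting $G_0$ and work with vertex weights on $G$ itself, which is the route the paper takes.
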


In the preceding statement, an $\epsilon$-balanced separator of $G=(V,E)$ is a subset $S \subseteq V$ such that in the induced graph $G[V\setminus S]$, every connected component contains at most $\epsilon |V|$ vertices.

The proof of \pref{thm:main} is constructive, as it is based on solving and rounding a linear program; it yields a polynomial-time algorithm for constructing the claimed separator. In the case when there is a bound on the maximum degree of $G$, one can use the well-known spectral bisection algorithm (see \pref{sec:l2conformal}).

Since planar graphs exclude $K_5$ as a minor, \pref{thm:main} implies that $m$-edge string graphs have $O(\sqrt{m})$-node balanced separators. Since the graphs that can be drawn on any compact surface of genus $g$ exclude a $K_h$ minor for $h \leq O(\sqrt{g+1})$, \pref{thm:main} also applies to string graphs over any fixed compact surface.

In addition, it implies the Alon-Seymour-Thomas \cite{AST90} separator theorem\footnote{Note that \pref{thm:main} is quantitatively weaker in the sense that \cite{AST90} shows the existence of separators with $O(h^{3/2} \sqrt{n})$ vertices.  Since every $K_h$-minor-free graph has at most $O(nh\sqrt{\smash[b]\log\,h})$ edges \cite{Kost82,Thom84}, our bound is $O(h^{7/2} (\log h)^{3/4} \sqrt{n})$.} for graphs excluding a fixed minor, for the following reason. Let us define the {\em subdivision of a graph $G$} to be the graph $\dot{G}$ obtained by subdividing every edge of $G$ into a path of length two. Then every graph $G$ is a rig over $\dot{G}$, and it is not hard to see that for $h \geq 1$, $G$ has a $K_h$ minor if and only if $\dot{G}$ has a $K_h$ minor.

\begin{figure} \begin{center} \includegraphics[width=11cm]{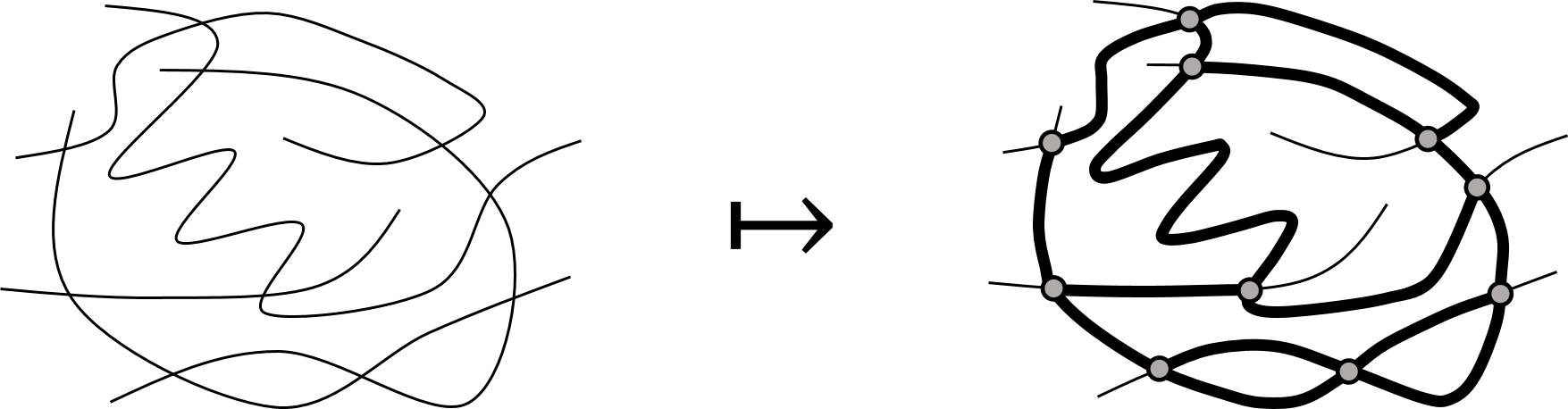} \caption{A string graph as a rig over a planar graph.\label{fig:strings}} \end{center} \end{figure}

\bigskip

\paragraph{Applications in topological graph theory}

We mention two applications of \pref{thm:main} in graph theory. In \cite{FP14}, the authors present some applications of separator theorems for string graphs.  In two cases, the tight bound for separators leads to tight bounds for other problems. The next two theorems confirm conjectures of Fox and Pach; as proved in \cite{FP14}, they follow from \pref{thm:main}. Both results are tight up to a constant factor.

\begin{theorem} There is a constant $c > 0$ such that for every $t \geq 1$, it holds that every $K_{t,t}$-free string graph on $n$ vertices has at most $c n t(\log t)$ edges. \end{theorem}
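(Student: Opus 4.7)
My plan is to proceed by strong induction on $n$, using \pref{thm:main} to produce a balanced separator and the K\H{o}v\'ari--S\'os--Tur\'an theorem to control the edges crossing the separator. Let $f(n)$ denote the maximum number of edges in a $K_{t,t}$-free string graph on $n$ vertices; I aim to show $f(n)\le cnt\log t$ for an absolute constant $c$, under the inductive hypothesis $f(k)\le ckt\log t$ for all $k<n$.

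Given such a graph $G$ with $m=f(n)$ edges, I apply \pref{thm:main} with $G_0$ planar (so $h=5$ and the separator constant is $O(1)$) to obtain a $\frac23$-balanced separator $S\subseteq V(G)$ with $|S|\le c_5\sqrt m$. The complement $V\setminus S$ decomposes into $A\cup B$ with $|A|,|B|\le 2n/3$. The edges of $G$ partition into three classes: edges inside $A$ or $B$, edges inside $S$, and edges between $S$ and $V\setminus S$. The first two classes are bounded via induction by $f(|A|)+f(|B|)+f(|S|)\le 2f(2n/3)+f(c_5\sqrt m)$. Since $G$ is $K_{t,t}$-free, the bipartite subgraph between $S$ and $V\setminus S$ is also $K_{t,t}$-free, so by K\H{o}v\'ari--S\'os--Tur\'an it has at most $O\!\bigl(t^{1/t}|S|\,(n-|S|)^{1-1/t}+t|S|\bigr)$ edges.

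The main obstacle is closing the induction. A single application of the separator theorem leaves a crossing-edge bound of order $\sqrt m\cdot n^{1-1/t}$, which is too large to absorb into the target $cnt\log t$ when $n\gg t\log t$. The standard resolution, carried out in \cite{FP14}, is to iterate the balanced separator theorem inside $G$: repeatedly apply \pref{thm:main} to each piece of the recursive decomposition until all leaves have at most $O(n/t)$ vertices, and gather the union of all separators encountered into a super-separator $S^\ast$. The total size of $S^\ast$ is controlled by a Cauchy--Schwarz sum over the $\Theta(\log t)$ levels of iteration, while the crossing edges at each level are bounded via K\H{o}v\'ari--S\'os--Tur\'an with increasingly favorable parameters since the piece sizes shrink geometrically. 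Combining these per-level contributions with the inductive edge bounds applied at the leaf pieces yields the target inequality $f(n)\le cnt\log t$; the factor $\log t$ arises precisely from the depth of the iterated separator decomposition.
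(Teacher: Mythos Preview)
The paper itself gives no proof of this theorem: it simply states that the derivation from \pref{thm:main} is carried out in \cite{FP14}. So there is nothing to compare against beyond a citation, and in that sense your proposal already goes further than the paper does.

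That said, your sketch has a genuine gap. You correctly observe that a single application of the separator plus K\H{o}v\'ari--S\'os--Tur\'an leaves a crossing term of order $\sqrt{m}\cdot n^{1-1/t}$ (or, using the other side of KST, $n\cdot |S|^{1-1/t}$), which cannot be absorbed into $Cnt\log t$ for $n$ large compared to $t$. But the fix you propose---iterating the separator $\Theta(\log t)$ times until pieces have size $O(n/t)$---does not resolve this. When you sum the KST crossing bounds over the levels, the dominant contribution is still of the same order as a single application: at level $j$ the piece sizes shrink like $(2/3)^j$ and the number of pieces grows like $(3/2)^j$, and after Cauchy--Schwarz the per-level crossing bound scales as $\sqrt{m}\,n^{1-1/t}\cdot(3/2)^{j(1/t-1/2)}$, which (for $t\geq 3$) is a convergent geometric series summing to $O(\sqrt{m}\,n^{1-1/t})$---exactly the term you were trying to beat. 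Meanwhile, the inductive bound applied at the leaves gives back $C(n-|S^\ast|)t\log t$, and the edges within $S^\ast$ give $C|S^\ast|t\log t$, so the linear terms reconstitute $Cnt\log t$ with no slack. The recursion does not close.

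In particular, your claim that ``the factor $\log t$ arises precisely from the depth of the iterated separator decomposition'' is not supported by the arithmetic: the depth contributes only to a secondary term (of order $tn\log t$, which is fine), not to the problematic KST term. The actual argument in \cite{FP14} handles the crossing edges differently; if you want to fill this in you should consult that paper rather than rely on the heuristic you have written.
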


A {\em topological graph} is a graph drawn in the plane so that its vertices are represented by points and its edges by curves connecting the corresponding pairs of points.

\begin{theorem}\label{thm:bi-crossing} In every topological graph with $n$ vertices and $m \geq 4n$ edges, there are two disjoint sets, each of cardinality \begin{equation}\label{eq:bi-crossing} \Omega\left(\frac{m^2}{n^2 \log \frac{n}{m}}\right) \end{equation} so that every edge in one set crosses all edges in the other. \end{theorem}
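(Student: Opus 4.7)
The plan is to deduce \pref{thm:bi-crossing} from the $K_{t,t}$-free density bound for string graphs stated just above (which itself is a consequence of \pref{thm:main}), combined with the classical crossing lemma, along the lines of Fox and Pach \cite{FP14}. First I would associate to the topological graph $T$ a string graph $S$ whose vertex set is $E(T)$ and in which two vertices of $S$ are adjacent precisely when the corresponding arcs of $T$ meet at a crossing point rather than at a shared endpoint. To realize $S$ as a bona fide string graph, I would truncate each arc by removing a small open disk around every vertex of $T$; the resulting arcs are continuous curves in the plane whose pairwise intersections correspond exactly to the crossings of $T$. Consequently $|V(S)| = m$ and $|E(S)|$ equals the total number of crossings of $T$.

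Next I would invoke the crossing lemma, which guarantees (since $m \ge 4n$) at least $c_0 m^3/n^2$ crossings in $T$ for a universal constant $c_0 > 0$, so that $|E(S)| \ge c_0 m^3/n^2$. On the other hand, the density bound applied to $S$ (which has $N = m$ vertices) asserts that if $S$ were $K_{t,t}$-free, then $|E(S)| \le C\,m\,t\log t$ for a universal constant $C$. Combining the two inequalities, $S$ cannot be $K_{t,t}$-free as long as $t \log t < (c_0/C)\,(m/n)^2$, and choosing $t$ at the threshold $t = \Theta\!\left(m^2/(n^2 \log(m/n))\right)$ forces $S$ to contain a copy of $K_{t,t}$. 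The two vertex classes of this $K_{t,t}$ pull back to two disjoint sets of $t$ edges of $T$, such that every edge in one set crosses every edge in the other; this yields the desired conclusion.

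The one delicate point in the plan is that edges of a topological graph that share an endpoint already ``intersect'' as arcs in the plane; the small-disk truncation above resolves this cleanly so that adjacency in $S$ truly corresponds to crossings in $T$. The substantive input is the linear-in-$m$ density bound for $K_{t,t}$-free string graphs, which is obtained from \pref{thm:main} by a standard recursion in which balanced separators are iteratively peeled off, and which I would use as a black box. I note that the sharp exponent $m^2/(n^2 \log(m/n))$ in the conclusion is precisely tied to the sharp $O(\sqrt{m})$ separator bound of \pref{thm:main}: the weaker $O(\sqrt{m}\log m)$ separator of \matousek would produce an extra polylogarithmic factor in the density bound, and hence a correspondingly weaker biclique estimate.
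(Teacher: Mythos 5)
Your proposal is correct and is essentially the argument the paper relies on: the paper does not prove \pref{thm:bi-crossing} itself but defers to Fox--Pach \cite{FP14}, whose derivation is exactly your route --- form the string graph on the edge set of the topological graph, lower-bound its edge count by the crossing lemma, upper-bound it via the $K_{t,t}$-free density theorem (itself a consequence of \pref{thm:main}), and extract the biclique at $t = \Theta\bigl(m^2/(n^2\log(m/n))\bigr)$. Your handling of shared endpoints by truncation and your remark that the $\log\frac{n}{m}$ in \eqref{eq:bi-crossing} should be read as $\log\frac{m}{n}$ are both consistent with the intended statement.
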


This improves over the bound of $\Omega\left(\frac{m^2}{n^2 (\log \frac{n}{m})^c}\right)$ for some $c > 0$ proved in \cite{FPT10}, where the authors also show that the bound \eqref{eq:bi-crossing} is tight.
Before we conclude this section, let us justify the observation made earlier.

\begin{lemma}
\label{lem:planar-rigs}
Finite string graphs are precisely finite region intersection graphs over planar graphs.
\end{lemma}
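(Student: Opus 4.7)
The first direction---every string graph is a rig over some planar graph---was observed just before the lemma statement by placing a vertex of $G_0$ at each pairwise intersection point of strings and connecting consecutive intersections along each string. It thus remains to show that every $G \in \rig(G_0)$ with $G_0$ planar is a string graph. I would fix a planar drawing of $G_0$ and, for each $u \in V(G)$, pick a spanning tree $T_u$ of the induced connected subgraph $G_0[R_u]$. Since $T_u$ inherits a plane drawing from that of $G_0$, and edges of a plane graph meet only at common endpoints, the plane subsets $T_u, T_{u'} \subseteq \mathbb{R}^2$ intersect (as point sets) if and only if they share a vertex of $V_0$, which is equivalent to $R_u \cap R_{u'} \neq \emptyset$. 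This reduces the problem to turning the combinatorial condition into a curve-intersection condition.

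The plan is then to replace each $T_u$ by a simple arc $s_u$ that stays very close to $T_u$ and passes exactly through each vertex of $R_u$. One concrete realization is to take a thin tubular neighborhood $N_u$ of $T_u$ in the plane of thickness $\epsilon_u$, where the $\epsilon_u$ are chosen distinct across $u$. For sufficiently small $\epsilon_u$, $N_u$ is a topological disk and its boundary $\partial N_u$ is a Jordan curve; I would locally pinch the tube at each vertex $v \in R_u$---notching inward into one of the sectors between the tree edges incident to $v$---so that $\partial N_u$ passes through $v$ while remaining a simple closed curve. Finally, deleting a tiny sub-arc in the interior of some edge opens $\partial N_u$ into a simple arc $s_u$. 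By construction, whenever $R_u \cap R_{u'} \neq \emptyset$, both arcs pass through a common vertex of $V_0$ and hence intersect.

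The main obstacle is to ensure that no spurious intersections arise. When $R_u \cap R_{u'} = \emptyset$, the plane subgraphs $T_u, T_{u'}$ are disjoint compact sets, so taking all thicknesses smaller than the positive minimum pairwise distance across the finite family of disjoint pairs $\{(T_u,T_{u'}) : R_u \cap R_{u'} = \emptyset\}$ makes the tubes---and hence the arcs---disjoint; such an $\epsilon$ exists by compactness. The role of distinct thicknesses $\epsilon_u \neq \epsilon_{u'}$ is to prevent $\partial N_u$ and $\partial N_{u'}$ from coinciding along an edge shared by $T_u$ and $T_{u'}$: with distinct thicknesses the two tubes along such an edge are nested rather than identical, so the only intersections between $s_u$ and $s_{u'}$ are at the pinned shared vertices, a finite set. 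The sub-arc removed to open each Jordan curve can then be taken in a small segment of $\partial N_u$ disjoint from all of these finitely many intersection points, so that opening into an arc destroys no required intersection.
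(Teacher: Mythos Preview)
Your argument has a gap at the start: you implicitly assume the regions $R_u$---and hence the spanning trees $T_u$---are finite when you appeal to compactness to get a positive minimum distance between disjoint trees. The lemma only asserts that $G$ is finite; the planar host $G_0$ and the regions $R_u \subseteq V_0$ may be infinite, in which case the plane drawing of $T_u$ need not be compact and your separation argument breaks. The paper closes this gap with a short ``type'' reduction: since $G$ is finite there are only finitely many sets $T(v)=\{u \in V : v \in R_u\}$, so each $R_u$ can be replaced by a finite connected subset realizing every type that occurs in $R_u$, and after that one may take $G_0$ itself to be finite. With that reduction in place, your construction goes through.

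Beyond this, your route from tree to string is more elaborate than the paper's. Instead of thickening, pinching, and opening a Jordan curve, the paper simply traces each $T_u$ by an Euler (depth-first) tour that begins and ends at a fixed vertex. The resulting curve sits entirely on the plane drawing of $T_u$, so two such curves meet precisely when the underlying trees share a vertex of $G_0$, i.e., when the regions intersect; no bookkeeping about distinct thicknesses or the location of intersection points is needed. Your tubular-neighborhood construction does have the minor virtue of producing \emph{simple} arcs, whereas the Euler tour retraces each edge, but that extra care is not required for the conclusion.
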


\begin{proof}
We have already argued that string graphs are planar rigs.
Consider now a planar graph $G_0=(V_0,E_0)$ and a finite graph $G=(V,E)$ such that $G \in \rig(G_0)$.  Let $\{ R_u \subseteq V_0 : u \in V\}$
be a representation of $G$ as a rig over $G_0$.

Since $G$ is finite, we may assume that each region $R_u$ is finite.
To see this, for $v \in V_0$, let its {\em type} be the set $T(v) = \{ u \in V : v \in R_u \}$.  Then since $G$ is finite, there are only finitely many types. For any region $R_u \subseteq V_0$, let $\tilde R_u$ be a finite set of vertices
that exhausts every type in $R_u$, and let $\hat R_u$ be a finite spanning tree of $\tilde R_u$ in the induced graph $G_0[R_u]$.  Then the regions $\{\hat R_u : u \in V\}$
are finite and connected, and also form a representation of $G$ as a rig over $G_0$.

When each region $R_u$ is finite, we may assume also that $G_0$ is finite.
Now take a planar drawing of $G_0$ in $\R^2$ where the edges
 of $G_0$ are drawn as continuous arcs, and for every $u \in V$, let
$T_u \subseteq \R^2$ be the drawing of the spanning tree of $R_u$.
Each $T_u$ can be represented by a string (simply trace the tree using
an in-order traversal that begins and ends at some fixed node), and thus
$G$ is a string graph.
\end{proof}

\subsection{Balanced separators and extremal spread} \label{sec:intro-spread}

Since complete graphs are string graphs, we do not have access to topological methods based on the exclusion of minors.  Instead, we highlight a more delicate structural theory. The following fact is an exercise.

\begin{fact*} If $\dot{G}$ is a string graph, then $G$ is planar. \end{fact*}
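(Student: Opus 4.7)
The idea is to take a string representation of $\dot{G}$ and convert it directly into a planar drawing of $G$. In $\dot{G}$, the vertex set $V(G)$ forms an independent set (edges of $G$ have been subdivided into length-two paths), and the subdivision vertices $\{w_e : e \in E(G)\}$ also form an independent set (each $w_e$ has degree two, adjacent only to the two endpoints of $e$). Hence, writing $A_v$ for the arc representing an original vertex $v \in V(G)$ and $B_e$ for the arc representing $w_e$, we obtain three structural properties: the $A_v$'s are pairwise disjoint, the $B_e$'s are pairwise disjoint, and $A_u \cap B_e \neq \emptyset$ iff $u$ is an endpoint of $e$.

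Starting from such a representation, I would first put the arcs in general position and replace each $B_e$, where $e = \{u,v\}$, by a sub-arc whose interior is disjoint from $\bigcup_{w \in V(G)} A_w$ and whose two endpoints lie on $A_u$ and $A_v$ respectively. This is possible because $B_e$ is a simple arc meeting both $A_u$ and $A_v$ but no other $A$-arc, so one may trim it between a suitable pair of consecutive crossings. Next, pick an arbitrary point $p_v \in A_v$ to serve as the image of each vertex $v$, and for every edge $e = \{u,v\} \in E(G)$ form a curve $\gamma_e$ by concatenating the sub-arc of $A_u$ from $p_u$ to the unique point of $A_u \cap B_e$, the trimmed $B_e$, and the sub-arc of $A_v$ from the unique point of $A_v \cap B_e$ to $p_v$. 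The family $\{\gamma_e : e \in E(G)\}$ is a topological drawing of $G$ with vertex points $\{p_v : v \in V(G)\}$.

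The main thing to verify is that, after a small perturbation near each $A_u$, the curves $\gamma_e$ meet pairwise only at common endpoints. Two edges that are vertex-disjoint in $G$ use entirely disjoint collections of $A$-arcs and $B$-arcs, so their curves do not intersect. Two edges sharing a vertex $u$ overlap only along sub-arcs of $A_u$; because $A_u$ is a Jordan arc in the plane, it possesses a two-sided tubular neighborhood in which the edges incident to $u$ can be routed as non-crossing parallel strands terminating at $p_u$, their cyclic order around $p_u$ being determined by the linear order of the points $A_u \cap B_e$ along $A_u$. I expect this tubular-neighborhood routing to be the only genuinely technical step, though it is a standard planar topology argument. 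Once it is carried out, the $\gamma_e$'s realize a crossing-free drawing of $G$, proving that $G$ is planar.
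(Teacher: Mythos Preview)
Your argument is correct. The paper itself does not prove this fact (it is stated as ``an exercise''), so there is no line-by-line comparison to make; but the paper's general machinery suggests a different route worth noting.

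The paper's implicit proof would go through \pref{lem:careful-minor} (equivalently \pref{lem:strict-minor}): if $\dot G$ is a string graph, then by \pref{lem:planar-rigs} we have $\dot G \in \rig(G_0)$ for some planar $G_0$; since $\dot G$ tautologically contains a strict $\dot G$-minor (hence a careful $G$-minor), \pref{lem:careful-minor} yields a $G$-minor in $G_0$, and planarity of $G$ follows from minor-closedness. This is a purely combinatorial argument carried out inside the planar graph $G_0$: the connected regions $R_u \subseteq V_{G_0}$ play the role of your arcs $A_u$, the regions $R_{w_e}$ play the role of your arcs $B_e$, and the construction of the minor in $G_0$ mirrors your trimming-and-concatenation step.

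Your approach is the direct geometric version of the same idea, working with arcs in $\R^2$ rather than connected subgraphs of $G_0$. The trade-off is that you avoid the detour through the rig-over-planar-graph characterization, at the cost of a genuine planar-topology step (the tubular-neighborhood rerouting near each $A_u$), which you correctly flag as the only technical point. One small caution there: after trimming, a $B_e$ can still re-enter the neighborhood $N_u$ multiple times, so you should either invoke general position more carefully near the endpoints $q_e^u$, or argue via contraction of the disjoint cellular sets $A_u$ (Moore's theorem) to sidestep the rerouting entirely. Either way the step goes through.
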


More generally, we recall that {\em $H$ is a minor of $G$} if $H$ can be obtained from $G$ by a sequence of edge contractions, edge deletions, and vertex deletions. If $H$ can be obtained using only edge contractions and vertex deletions, we say that {\em $H$ is a strict minor of $G$.} The following lemma appears in \pref{sec:careful}.

\begin{lemma}\label{lem:strict-minor} If $G \in \rig(G_0)$ and $\dot{H}$ is a strict minor of $G$, then $H$ is a minor of $G_0$. \end{lemma}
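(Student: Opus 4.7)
My plan is to translate the strict minor model of $\dot H$ in $G$ into an $H$-minor model in $G_0$ by taking unions of regions and then rerouting through the buffers supplied by the subdivision vertices. Since $\dot H$ is a strict minor of $G$, there is a family of pairwise disjoint connected vertex sets $\{B_v \subseteq V(G) : v \in V(\dot H)\}$ with the property that $B_v, B_w$ are joined by an edge of $G$ \emph{if and only if} $\{v,w\} \in E(\dot H)$. Splitting $V(\dot H) = V(H) \sqcup \{p_e : e \in E(H)\}$ into principal and subdivision vertices, and writing $\{R_u \subseteq V_0\}$ for the region representation of $G$, I would define lifts $S_v := \bigcup_{u \in B_v} R_u \subseteq V_0$ for each $v \in V(\dot H)$.

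First I would verify three basic properties of the $S_v$. Each $S_v$ is connected in $G_0$, because $B_v$ is connected in $G$ and the rig definition turns adjacency in $G[B_v]$ into overlap of the corresponding regions, so the $R_u$'s glue together along any spanning tree of $G[B_v]$. For $v \neq w$ non-adjacent in $\dot H$, $S_v \cap S_w = \emptyset$: a shared vertex would give $u \in B_v$, $u' \in B_w$ with $R_u \cap R_{u'} \neq \emptyset$, forcing an edge between $B_v$ and $B_w$ in $G$ and contradicting strictness. The same bookkeeping in reverse gives $S_v \cap S_w \neq \emptyset$ whenever $v \sim w$ in $\dot H$. In particular the principal cores $\{S_x : x \in V(H)\}$ are pairwise disjoint and connected in $G_0$ (no two principal vertices are adjacent in $\dot H$), each buffer $S_{p_e}$ is disjoint from every other buffer and from every principal $S_z$ with $z \notin e$, and $S_{p_e}$ meets both $S_x$ and $S_y$ when $e = \{x,y\}$.

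Next I would assemble the $H$-minor by absorbing each buffer into a single chosen endpoint. Fix an arbitrary orientation $\tau \colon E(H) \to V(H)$ with $\tau(e) \in e$. For each $e = \{x,y\}$ pick a shortest path $v_0, v_1, \ldots, v_k$ in $G_0[S_{p_e}]$ from $S_x \cap S_{p_e}$ to $S_y \cap S_{p_e}$; minimality forces the internal vertices into $S_{p_e} \setminus (S_x \cup S_y)$. Define
\[
C_x := S_x \,\cup\, \bigcup_{\substack{e = \{x,y\} \in E(H) \\ \tau(e) = x}} \{v_1, \ldots, v_{k-1}\}\mper
\]
Each $C_x$ is connected in $G_0$, because each appended path attaches to $S_x$ via the edge $\{v_0, v_1\}$ of $G_0$; and for every $e = \{x,y\} \in E(H)$ the $G_0$-edge $\{v_{k-1}, v_k\}$ (or $\{v_0, v_1\}$ when $k=1$) witnesses adjacency between $C_x$ and $C_y$, yielding the desired $H$-minor of $G_0$.

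The one point that requires care is pairwise disjointness of the $C_x$, and this is the step I expect to be the main obstacle. Fortunately, disjointness reduces to four observations: the principal cores $S_x$ are already pairwise disjoint; the orientation $\tau$ ensures each set of internal vertices is claimed by at most one $C_x$; the shortest-path choice keeps those internals out of every $S_y$; and distinct edges use disjoint buffers $S_{p_e}$. The whole construction leans essentially on the hypothesis that the strict minor is of $\dot H$ rather than of $H$: the subdivision vertices $p_e$ supply exactly the buffer regions $S_{p_e}$ lying outside all principal cores save the two endpoints of $e$, giving the slack needed to route an $H$-edge in $G_0$ without two principal cores colliding.
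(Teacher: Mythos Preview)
Your argument is correct and follows the same essential strategy as the paper: lift each supernode $B_v$ of the strict $\dot H$-minor to the union of its regions in $G_0$, use strictness to obtain the needed disjointness, and use the subdivision supernodes as buffers through which to route the $H$-edges. The one organizational difference is that the paper first proves an auxiliary structural lemma (\pref{lem:careful-struct}) showing that in any strict $\dot H$-minor one may replace each subdivision supernode by a \emph{single} vertex $w_{xy}$; after this reduction the buffer region is just $R_{w_{xy}}$, automatically connected and pairwise disjoint from the other buffers, so no shortest-path extraction or orientation trick is needed in $G_0$. You instead skip the pre-processing in $G$ and do the path extraction directly in $G_0$ via a shortest-path argument inside $S_{p_e}$. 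Both routes work; the paper's factoring isolates a reusable statement about careful minors, while your version is slightly more self-contained.
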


This topological structure of (forbidden) strict minors in $G$ interacts nicely with ``conformal geometry'' on $G$,
as we now explain.
Consider the family of all pseudo-metric spaces that arise from a finite graph $G$ by assigning non-negative lengths to its edges and taking the induced shortest path distance. Certainly if we add an edge to $G$, the family of such spaces can only grow (since by giving the edge length equal to the diameter of the space, we effectively remove it from consideration). In particular, if $G=K_n$ is the complete graph on $n$ vertices, then every $n$-point metric space is a path metric on $G$.

The same phenomenon does not arise when one instead considers {\em vertex-weighted} path metrics on $G$. A {\em conformal graph} is a pair $(G,\omega)$ such that $G=(V,E)$ is a graph, and $\omega : V \to \R_+$. This defines a pseudo-metric as follows:  Assign to every $\{u,v\} \in E$ a length equal to $\frac{\omega(u)+\omega(v)}{2}$ and let $\dist_{\omega}$ be the induced shortest path distance. We will refer to $\omega$ as a {\em conformal metric on $G$} (and sometimes we abuse terminology and refer to $\dist_{\omega}$ as a conformal metric as well).

A significant tool will be the study of extremal conformal metrics on a graph $G$. Unlike in the edge-weighted case, the family of path distances coming from conformal metrics can be well-behaved even if $G$ contains arbitrarily large complete graph minors. As a simple example, let $K_{\N}$ denote the complete graph on countably many vertices.
Every countable metric space is a shortest-path metric on some edge-weighting of $K_{\N}$, and yet
every distance arising from a conformal metric $\omega : V_{K_{\N}} \to \R_+$ is bi-Lipschitz to the ultrametric
$(u,v) \mapsto \max\{\omega(u),\omega(v)\}$.

\bigskip

\paragraph{Vertex expansion and observable spread}

Fix a graph $G=(V_G,E_G) \in \rig(G_0)$ with $n=|V_G|$ and $m=|E_G|$. Since the family $\rig(G_0)$ is closed under taking induced subgraphs, a standard reduction allows us to focus on finding a subset $U \subseteq V_{G}$ with small isoperimetric ratio: $\frac{|\partial U|}{|U|} \lesssim \frac{\sqrt{m}}{n}$, where \[ \partial U \seteq \{ v \in U : E_G(v,V_G \setminus U) \neq \emptyset\}\,, \] and $E_G(v,V_G \setminus U)$ is the set of edges between $v$ and vertices outside $U$. Also define the interior $U^{\circ} = U \setminus \partial U$.

Let us define the {\em vertex expansion constant} of $G$ as \begin{equation}\label{eq:expansion} \phi_G \seteq \min \left\{ \frac{|\partial U|}{|U|} : \emptyset \neq U \subseteq V_G, |U^{\circ}| \leq \frac{|V_G|}{2} \right\}\,. \end{equation}

In \cite{FHL08}, it is shown that this quantity is related to the concentration of Lipschitz functions
on extremal conformal metrics on $G$.
(The study of such properties has a rich history; consider, for instance,
the concentration function in the sense of L\'evy and Milman (e.g., \cite{MS86}) and 
Gromov's observable diameter \cite{Gromov07}.)

For a finite metric space $(X,\dist)$, we define the {\em spread of $X$} as the quantity \[\spread(X,\dist) \seteq \frac{1}{|X|^2} \sum_{x,y \in X} \dist(x,y)\,.\]
Define the {\em observable spread of $X$} by \begin{equation}\label{eq:sobs} \sobs(X,\dist) \seteq \sup_{f : X \to \R} \left\{ \frac{1}{|X|^2} \sum_{x,y \in X} |f(x)-f(y)| : \textrm{$f$ is $1$-Lipschitz}\right\}\,. \end{equation}

\begin{remark}
 We remark on the terminology:  In general, it is difficult to ``view'' a large metric space all at once; this holds both conceptually and from an algorithmic standpoint. If one thinks of Lipschitz maps $f : X \to \R$ as ``observations'' then the observable spread captures how much of the spread can be ``seen.''
\end{remark}

We then define the {\em $L^1$-extremal observable spread of $G$} as \begin{equation}\label{eq:csobs} \csobs(G) \seteq \sup_{\omega : V_G \to \R_+} \left\{ \sobs(V_G,\dist_{\omega}) : \|\omega\|_{L^1(V_G)} \leq 1\right\}\,, \end{equation} where $\|\omega\|_{L^1(V_G)} \seteq \frac{1}{|V_G|} \sum_{v \in V_G} \omega(v)$.

Such extremal quantities arise naturally in the study of linear programming relaxations
of discrete optimization problems (like finding the smallest balanced vertex separator in a graph).
Related extremal notions are often employed
in conformal geometry and its discretizations;
see, in particular, the notions of extremal length
employed by Duffin \cite{Duffin62} and 
Cannon \cite{Cannon94}.
   
   In \pref{sec:conformal-width}, we recall the proof of the following theorem from \cite{FHL08} that relates expansion to the observable spread.

\begin{theorem}[\cite{FHL08}] \label{thm:spread} For every finite graph $G$, \[ \tfrac12 \csobs(G) \leq \frac{1}{\phi_G} \leq 3 \csobs(G)\,. \] \end{theorem}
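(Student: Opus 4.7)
The theorem asserts a two-sided equivalence, and I plan to prove each direction separately; both rely on level-set/coarea arguments combined with the isoperimetric definition of $\phi_G$.

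For the lower bound $\csobs(G) \geq 1/(3\phi_G)$, the plan is to exhibit an explicit witness from a minimizer $U \subseteq V_G$ of $\phi_G$, so that $|\partial U|/|U| = \phi_G$ and $|U^\circ|\leq n/2$, where $n = |V_G|$. Take $\omega := (n/|\partial U|)\mathbf{1}_{\partial U}$, which satisfies $\|\omega\|_{L^1(V_G)} = 1$, and consider the test function $f(v) := \dist_\omega(v, V_G \setminus U)$. This $f$ is $1$-Lipschitz (as a distance function) with $f|_{V_G\setminus U}\equiv 0$, while $f(v) \geq n/(2|\partial U|)$ on $U^\circ$, since any path from $U^\circ$ to $V_G \setminus U$ must traverse an edge incident to $\partial U$ of $\omega$-length at least $n/(2|\partial U|)$. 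The spread then satisfies $\sobs(V_G, \dist_\omega) \geq |U^\circ|\cdot|V_G\setminus U|/n^2 \cdot n/(2|\partial U|)$. A short case analysis on the relative sizes of $U^\circ$ and $V_G\setminus U$ (when both are $\Omega(n)$ this witness is used directly; otherwise the relation $|U^\circ|+|\partial U|+|V_G\setminus U| = n$ combined with $|U^\circ|\leq n/2$ forces $\phi_G$ to be a constant, trivializing the bound) delivers the claimed factor $1/3$.

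For the upper bound $\csobs(G) \leq 2/\phi_G$, fix $\omega$ with $\|\omega\|_{L^1}\leq 1$ and a $1$-Lipschitz $f:V_G\to\R$, and translate so that $f$ has median $m = 0$. The coarea identity
\[
\sum_{x,y \in V_G}|f(x)-f(y)| \;=\; 2\int_{-\infty}^{\infty}|U_t|\,(n-|U_t|)\,dt \;\leq\; 2n\int_{-\infty}^{\infty}\min(|U_t|,\, n-|U_t|)\,dt,
\]
with $U_t := \{v : f(v) \leq t\}$, combined with the median property that $|U_t|\leq n/2$ for $t\leq 0$ and $|V_G\setminus U_t|\leq n/2$ for $t>0$, reduces the task to establishing the absolute-deviation inequality
\[
\phi_G \sum_{v\in V_G} |f(v)| \;\leq\; \sum_{v\in V_G} \omega(v),
\]
since $\int\min(|U_t|,n-|U_t|)\,dt = \sum_v |f(v)-m| = \sum_v|f(v)|$. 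One then applies the vertex-expansion inequality $|\partial A_s|\geq \phi_G |A_s|$ directly to each superlevel set $A_s := \{v : f(v)\geq s\}$ and its analogue for sublevel sets (both of which have interior of size at most $n/2$ for $s>0$), and accumulates the resulting bounds using the conformal Lipschitz condition to relate $s$-increments to $\omega$-weights.

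The main obstacle is establishing this absolute-deviation inequality, which is more delicate than it looks: a direct application of the vertex-boundary coarea formula does not suffice, because the integrated vertex boundary $\int |\partial U_t|\,dt$ is not in general bounded by $\sum_v \omega(v)$ (this fails already for a star with $\omega$ concentrated at the center). The correct argument instead proceeds level-by-level, coupling the isoperimetric ratio $|A_s|/|\partial A_s|\leq 1/\phi_G$ with the observation that the $s$-duration during which a vertex $v$ contributes to successive vertex boundaries is controlled, through the Lipschitz condition $|f(v)-f(w)|\leq (\omega(v)+\omega(w))/2$, directly by the conformal weights rather than by an edge-counting (degree-dependent) quantity.
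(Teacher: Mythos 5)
You have the right architecture for both directions, but each half has a concrete gap. For the bound $\csobs(G)\geq\frac{1}{3\phi_G}$: your witness $f(v)=\dist_\omega(v,V_G\setminus U)$ vanishes on all of $\partial U\cup(V_G\setminus U)$, so it certifies nothing when $|U^\circ|$ is small, and the fallback ``otherwise $\phi_G$ is a constant, trivializing the bound'' is not a proof. Since $\partial U\subseteq U$ forces $\phi_G\leq 1$, the theorem asserts $\csobs(G)\geq\tfrac13$ for \emph{every} connected graph, so even when $\phi_G=\Theta(1)$ you must still exhibit a witness achieving constant observable spread; $G=K_n$ is the cleanest instance, where your $f$ is identically zero. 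Moreover the ``both $\Omega(n)$'' case does not by itself deliver the constant: with $|U^\circ|=|V_G\setminus U|=0.3n$ and $|\partial U|=0.4n$, your lower bound misses the target $\frac{|U|}{3|\partial U|}$ by more than a factor of two. The paper covers these regimes with a \emph{second} test function, taking values $-\tfrac12,0,+\tfrac12$ on the pieces of a bipartition $\partial U=S_1\cup S_2$ and on $V_G\setminus \partial U$, invoked when $|\partial U|\geq|U^\circ|$; your case analysis needs an analogous second witness before it closes.

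For the bound $\csobs(G)\leq\frac{2}{\phi_G}$: you correctly observe that $\int|\partial U_t|\,dt$ is not bounded by $\sum_v\omega(v)$, but the repair you describe still accumulates the \emph{vertex boundaries} $\partial A_s$ against the inequality $|\partial A_s|\geq\phi_G|A_s|$, claiming that the duration each vertex spends in these boundaries is controlled by its conformal weight. Your own star example refutes exactly that claim: a leaf with $\omega(v)=0$ lies in $\partial A_s$ for all $s$ in an interval of length $\omega(c)/2$. The missing idea is to change the separating set rather than the accounting: take the fat level set $S_\theta=\{v:|f(v)-\theta|\leq\tfrac12\omega(v)\}$, for which $\int\1_{\{v\in S_\theta\}}\,d\theta=\omega(v)$ holds exactly, verify via the Lipschitz condition that no edge joins $\{f\leq\theta\}\setminus S_\theta$ to $\{f>\theta\}\setminus S_\theta$, and then apply the \emph{separator} form of vertex expansion --- $|S|\geq\phi_G\,|A|\,|B|/|V_G|$ whenever $S$ disconnects a partition $V_G=A\cup B$ --- rather than the boundary form $|\partial W|\geq\phi_G|W|$. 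Integrating $|S_\theta|\geq\phi_G|A_\theta|\,|B_\theta|/|V_G|$ over $\theta$ and using $\int|A_\theta|\,|B_\theta|\,d\theta=\tfrac12\sum_{x,y}|f(x)-f(y)|$ finishes the proof; without this substitution your level-by-level scheme cannot terminate in $\sum_v\omega(v)$.
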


\begin{example} If $G$ is the subgraph of the lattice $\Z^d$ on the vertex set $\{0,1,\ldots,L\}^{d}$, then $\phi_G \asymp 1/L$ and $\cspread(G) \asymp L$.  This can be achieved by taking $\omega \equiv 1$ and defining $f : V_G \to \R$ by $f(x)=x_1$. \end{example}

In light of \pref{thm:spread}, to prove \pref{thm:main}, it suffices to give a lower bound on $\csobs(G)$. It is
natural to compare this quantity to the {\em $L^1$-extremal spread of $G$}: \begin{equation}\label{eq:cspread} \cspread(G) \seteq \max \left\{ \frac{1}{|V_G|^2} \sum_{u,v \in V_G} \dist_{\omega}(u,v) : \|\omega\|_{L^1(V_G)} \leq 1\right\}\,. \end{equation}
Let us examine these two notions for planar graphs
using the theory of circle packings.

\begin{example}[Circle packings] Suppose that $G$ is a finite planar graph. The Koebe-Andreev-Thurston circle packing theorem asserts that $G$ is the tangency graph of a family $\{ D_v : v \in V_G\}$ of circles on the unit sphere $\mathbb{S}^2 \subseteq \R^3$. Let $\{c_v : v\in V_G\} \subseteq \mathbb{S}^2$ and $\{r_v > 0 : v \in V_G\}$ be the centers and radii of the circles, respectively. An argument of Spielman and Teng \cite{spielman-teng} (see also Hersch \cite{Hersch70} for the analogous result for conformal mappings) shows that one can take $\sum_{v \in V_G} c_v = \bm{0}$.

If we define $\omega(v) = r_v$ for $v \in V_G$, then $\dist_{\omega} \geq \dist_{\mathbb{S}^2} \geq \dist_{\R^3}$ on the centers $\{c_v : v \in V_G\}$.  (The latter two distances are the geodesic distance on $\mathbb{S}^2$ and the Euclidean distance on $\R^3$, respectively).

Using the fact that $\sum_{v \in V_G} c_v = \bm{0}$, we have \begin{equation}\label{eq:euclidean} \sum_{u,v \in V_G} \|c_u-c_v\|_2^2 = 2 n \sum_{u \in V_G} \|c_v\|^2 = 2 n^2\,. \end{equation} This yields \[ \sum_{u,v \in V_G} \dist_{\omega}(u,v) \geq \sum_{u,v\in V_G} \|c_u-c_v\| \geq \frac{n^2}{2}\,. \] Moreover,
 \[ \|\omega\|_{L^1(V_G)} \leq \|\omega\|_{L^2(V_G)} = \sqrt{\frac{1}{n} \sum_{v \in V_G} r_v^2} \leq \sqrt{\frac{\vol(\mathbb{S}^2)}{\pi n}} = \sqrt{\frac{4}{n}}\,.
 \] It follows that $\cspread(G) \geq \frac{\sqrt{n}}{4}$.

Observe that the three coordinate projections $\R^3 \to \R$ are all Lipschitz with respect to $\dist_{\omega}$, and one of them contributes at least a $1/3$ fraction to the sum \eqref{eq:euclidean}. We conclude that $\csobs(G) \geq \frac{\sqrt{n}}{12}$.
Combined with \pref{thm:spread}, this yields a proof of the Lipton-Tarjan separator theorem \cite{LT79}.
Similar proofs of the separator theorem based on circle packings are known (see \cite{MTWV97}),
and this one is not new (certainly it was known to the authors of \cite{spielman-teng}).
\end{example}

We will prove \pref{thm:main} in two steps:  By first giving a lower bound $\cspread(G) \gtrsim n/\sqrt{m}$ and then establishing $\csobs(G) \gtrsim \cspread(G)$.

\medskip

For the first step, we follow \cite{Mat14,FHL08,BLR08}.  The optimization \eqref{eq:cspread} is a linear program, and the dual optimization is a maximum multi-flow problem in $G$.
(See \pref{sec:flows} for a detailed discussion and statement of the duality theory.)
\matousek shows that
if $\hat{G}$ is a string graph,
then a multi-flow with small vertex congestion 
in $\hat{G}$
can be used to construct a multi-flow in a related
planar graph $G$ that has low vertex
congestion {\em in the $\ell_2$ sense.}
This element of the proof is crucial and ingenious; the reduction from a string graph $\hat{G}$
to a planar graph $G$ does not preserve congestion in the more standard $\ell_{\infty}$ sense.

Our work in \cite{BLR08} shows that such a multi-flow with small $\ell_2$ congestion cannot exist,
and thus one
concludes that there is no low-congestion flow in $\hat{G}$, providing a lower bound on $\cspread(G)$ via LP duality.
In \pref{sec:flows}, we extend this argument to rigs over $K_h$-minor-free graphs using the flow crossing framework of \cite{BLR08}.

\bigskip

\paragraph{Spread vs. observable spread}

Our major departure from \cite{Mat14} comes in the second step: Rounding a fractional separator to an integral separator by establishing that $\csobs(G) \geq C_h \cdot \cspread(G)$ when $G$ is a rig over a $K_h$-minor-free graph. \matousek used the following result that holds for any finite metric space.
It follows easily from the arguments of \cite{Bourgain85} or \cite{LR99} (see also \cite[Ch. 15]{Mat01}).

\begin{theorem} For any finite metric space $(X,d)$ with $|X| \geq 2$, it holds that \[ \sobs(X,d) \geq \frac{\spread(X,d)}{O(\log |X|)}\,. \] In particular, for any graph $G$ on $n \geq 2$ vertices, \[ \csobs(G) \geq \frac{\cspread(G)}{O(\log n)}\,. \] \end{theorem}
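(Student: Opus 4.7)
My plan is to prove the metric space statement first and then deduce the conformal corollary by applying it pointwise in $\omega$. The starting point is that Bourgain's embedding theorem (together with the standard Fréchet/Linial--London--Rabinovich interpretation) produces, for any $n$-point metric space $(X,d)$, a finite family of $1$-Lipschitz functions $\phi_1,\ldots,\phi_N : X \to \R$ and nonnegative weights $c_1,\ldots,c_N$ with $\sum_i c_i \leq 1$, such that
\[
d(x,y) \;\leq\; C \log n \cdot \sum_{i=1}^N c_i\,|\phi_i(x)-\phi_i(y)| \qquad \forall x,y \in X,
\]
for an absolute constant $C$. Concretely the $\phi_i$ can be taken to be (multiples of) the distance-to-set functions $x \mapsto d(x,A_{j,k})$ coming from Bourgain's random subsets at the $O(\log n)$ scales; these are automatically $1$-Lipschitz, and the inequality above is precisely the Bourgain lower bound after rescaling.

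Next I would sum the inequality over all pairs $(x,y) \in X \times X$ to obtain
\[
\sum_{x,y \in X} d(x,y) \;\leq\; C \log n \sum_{i=1}^N c_i \sum_{x,y \in X} |\phi_i(x)-\phi_i(y)|.
\]
Since $\sum_i c_i \leq 1$, an averaging (pigeonhole) argument yields an index $i^\star$ with
\[
\sum_{x,y \in X} |\phi_{i^\star}(x)-\phi_{i^\star}(y)| \;\geq\; \frac{1}{C \log n} \sum_{x,y \in X} d(x,y).
\]
Dividing both sides by $|X|^2$ and noting that $f := \phi_{i^\star}$ is a legitimate $1$-Lipschitz competitor in the sup defining $\sobs(X,d)$, this gives $\sobs(X,d) \geq \spread(X,d) / (C\log n)$, which is the first displayed bound.

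For the conformal consequence, fix any weight function $\omega : V_G \to \R_+$ with $\|\omega\|_{L^1(V_G)} \leq 1$. The pair $(V_G, \dist_\omega)$ is a finite metric space on $n$ points, and a function $f : V_G \to \R$ is $1$-Lipschitz with respect to $\dist_\omega$ if and only if it is an admissible test function in the supremum defining $\sobs(V_G,\dist_\omega)$. Applying the metric space statement to $(V_G,\dist_\omega)$ gives $\sobs(V_G,\dist_\omega) \geq \spread(V_G,\dist_\omega)/(C \log n)$, and then taking the supremum over all admissible $\omega$ on each side yields $\csobs(G) \geq \cspread(G)/(C \log n)$.

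The only substantive ingredient is Bourgain's theorem in its Fréchet-coordinate formulation, so the ``main obstacle'' is really just invoking it in the right form: one needs the decomposition as a nonnegative combination of $1$-Lipschitz coordinate functions rather than the black-box $\ell_2$ distortion statement, since the averaging step in the second paragraph requires honest $1$-Lipschitz ``observations.'' Once one has that, the rest is a one-line averaging argument followed by a pointwise-in-$\omega$ lift to the conformal setting.
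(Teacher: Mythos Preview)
Your argument is correct. The paper does not actually supply its own proof of this statement---it merely cites Bourgain and Linial--London--Rabinovich---and your derivation (Bourgain's Fr\'echet-coordinate embedding, averaging over all pairs, pigeonhole on the coordinates, then a supremum over $\omega$) is precisely the standard route those references encode.
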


Instead of using the preceding result, we employ the graph partitioning method of Klein, Plotkin, and Rao \cite{KPR93}. Those authors present an iterative process for repeatedly partitioning a metric graph $G$ until the diameter of the remaining components is bounded.  If the partitioning process fails, they construct a $K_h$ minor in $G$.

Since rigs over $K_h$-minor-free graphs do not necessarily exclude any minors, we need to construct a different sort of forbidden structure. This is the role that \pref{lem:strict-minor} plays in \pref{sec:careful}. In order for the argument to work, it is essential that we construct induced partitions:  We remove a subset of the vertices which induces a partitioning of the remainder into connected components.

After constructing a suitable random partition of $G$, standard methods from metric embedding theory allow us to conclude in \pref{thm:embeddings} that if $G$ is a rig over some $K_h$-minor-free graph, then \[ \csobs(G) \geq \frac{\cspread(G)}{O(h^2)}\,. \]

\subsection{Eigenvalues and $L^2$-extremal spread} \label{sec:l2conformal}

In \pref{sec:eigenvalues}, we show how the methods presented here can be used to control eigenvalues of the discrete Laplacian on rigs.  Consider the linear space $\R^{V_G} = \{f : V_G \to \R\}$. Let $\cL_G : \R^{V_G} \to \R^{V_G}$ be the symmetric, positive semi-definite linear operator given by \[ \cL_G f(v) = \sum_{u : \{u,v\} \in E_G} (f(v)-f(u))\,. \] Let $0=\lambda_0(G) \leq \lambda_1(G) \leq \cdots \leq \lambda_{|V_G|-1}(G)$ denote the spectrum of $\cL_G$.

Define the {\em $L^p$-extremal spread} of $G$ as \begin{equation}\label{eq:Lpspread} \cspread_p(G) = \max_{\omega : V_{G} \to \R_+} \left\{ \frac{1}{|V_G|^2} \sum_{u,v \in V_G} \dist_{\omega}(u,v) : \|\omega\|_{L^p(V_G)}\leq 1\right\}\,. \end{equation}

In \cite{BLR08}, the $L^2$-extremal spread is used to give upper bounds on the first non-trivial eigenvalue of graphs that exclude a fixed minor. In \cite{KLPT09}, a stronger property of conformal metrics is used to bound the higher eigenvalues as well.  Roughly speaking, to control the $k$th eigenvalue, one requires a conformal metric $\omega : V_G \to \R_+$ such that the spread on every subset of size $\geq |V_G|/k$ is large. Combining their main theorems with the methods of \pref{sec:separators} and \pref{sec:flows}, we prove the following theorem in \pref{sec:eigenvalues}.

\begin{theorem}\label{thm:eigenvalues} Suppose that $G \in \rig(G_0)$ and $G_0$ excludes $K_h$ as a minor for some $h \geq 3$. If $d_{\max}$ is the maximum degree of $G$, then for any $k=1,2,\ldots,|V_G|-1$, it holds that \[ \lambda_k(G) \leq O(d_{\max}^2 h^6 \log h) \frac{k}{|V_G|}\,. \] \end{theorem}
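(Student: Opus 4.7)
The plan is to mirror in the $L^2$ setting the two-step program underlying \pref{thm:main}—flow-based lower bound on extremal spread, then padded partitioning to boost spread to observable spread—and then feed the resulting extremal conformal metric into the higher-eigenvalue machinery of \cite{KLPT09}.

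First, I would establish an $L^2$ analogue of the argument in \pref{sec:flows}, proving a lower bound $\cspread_2(H) \gtrsim \sqrt{|V_H|/|E_H|}/\poly(h)$ for every induced subgraph $H$ of $G$. The LP dual of \eqref{eq:Lpspread} with $p=2$ is a multicommodity-flow problem in which vertex congestion is measured in the $\ell_2^2$ sense. I would adapt Matoušek's rig-to-underlying-graph reduction so that it preserves $\ell_2$ vertex congestion, and then apply the crossing-number lower bound of \cite{BLR08} for multi-flows in $K_h$-minor-free graphs. Because $\rig(G_0)$ is closed under taking induced subgraphs, the resulting bound descends to every induced subgraph of $G$ uniformly.

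Second, to invoke \cite{KLPT09} I need the stronger property that a single conformal weight $\omega : V_G \to \R_+$ with $\|\omega\|_{L^2(V_G)} \leq 1$ witnesses large observable spread of $\dist_\omega$ on \emph{every} subset of size at least $|V_G|/k$. I would obtain this by taking $\omega$ to be (essentially) the maximizer of $\cspread_2(G)$ and then running the KPR-style padded random partitioning from \pref{sec:careful}: because that partitioning uses only \pref{lem:strict-minor}, works directly on rigs over $K_h$-minor-free graphs, and produces \emph{induced} partitions, its guarantees descend to every induced subgraph with only an $O(h^2)$ loss, as in \pref{thm:embeddings}. The relevant form of the partitioning is the random one, so that its observable-spread guarantee holds simultaneously for every subset of size at least $|V_G|/k$.

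Third, the main eigenvalue lemma of \cite{KLPT09} converts such a family of well-spread subsets into $k$ pairwise-disjoint, approximately $L^2$-orthogonal bump functions, each of whose Rayleigh quotients is $\lesssim d_{\max}^2 \cdot k/|V_G|$ times an inverse-square factor in the observable spread; the $d_{\max}^2$ overhead is the standard cost of bounding $\|\nabla f\|_2^2$ by $\|\nabla f\|_\infty^2$ for $\dist_\omega$-Lipschitz $f$. Courant–Fischer then yields the claimed bound, with the polynomial in $h$ landing at $O(h^6 \log h)$ once one collects the $O(h^2)$ loss from \pref{thm:embeddings} together with the squared Kostochka–Thomason edge-density bound $|E_G| \leq O(|V_G| h \sqrt{\smash[b]{\log h}})$ that enters via $\cspread_2$. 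The main obstacle is the first step: one must verify that the flow-crossing framework of \cite{BLR08} combines with Matoušek's rig-to-underlying-graph reduction \emph{in the $\ell_2$ sense}, simultaneously across all source–sink pairs, since the $\ell_2^2$ notion of congestion is more delicate than $\ell_\infty$ under the reduction.
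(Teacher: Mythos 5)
Your overall architecture is right---$\ell_2$ vertex congestion, crossing-number lower bounds in the underlying $K_h$-minor-free graph, padded partitions, and the eigenvalue machinery of \cite{KLPT09}---but there is a genuine gap in your second step. The hypothesis that \cite{KLPT09} needs is that a \emph{single} weight $\omega$ with $\|\omega\|_{\ell_2(V_G)}\le 1$ is $(r,\epsilon)$-spreading for $r=\lfloor n/8k\rfloor$, i.e.\ \emph{every} $r$-point subset has large average pairwise $\dist_\omega$-distance. Taking $\omega$ to be the maximizer of $\cspread_2(G)$ does not give this: that maximizer only controls the average distance over all of $V_G$, and it can leave an $r$-point subset sitting inside a ball of diameter zero (think of a long path attached to a dense cluster, where the extremal weight lives on the path). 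Neither the random padded partition nor passing to induced subgraphs repairs this---the partition cannot create distance that the metric does not have, and having a separate $\cspread_2$-extremal weight for each induced subgraph is not the same as one weight spreading all subsets simultaneously. The correct mechanism, and the one the paper uses, is the duality of \cite[Thm 2.4]{KLPT09} (\pref{thm:klpt-duality}) between $(r,\epsilon)$-spreading weights and $\mu$-flows $\Lambda\in\cF_r(G)$ where $\mu$ is supported on $r$-subsets; one then lower-bounds $\|c_\Lambda\|_{\ell_2}$ for every such flow by pushing it into the underlying graph via the path-mapping of \pref{thm:mat} and invoking the crossing-congestion bound $\cross_G^*(H,\mathfrak{D}_\mu)\gtrsim r^5/(|V_H|h^2\log h)$ (\pref{thm:klpt-main}). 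This subset-uniform flow duality is the idea your proposal is missing.

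Two smaller accounting errors: the factor $d_{\max}^2$ does not come entirely from the Rayleigh-quotient estimate---one factor of $d_{\max}$ comes from \pref{thm:klpt-evs}, and the other comes from the rig-to-underlying-graph transfer, since the charging argument of \pref{claim:crossup} bounds the crossing congestion by $4\,d_{\max}\|c_\Lambda\|_{\ell_2}^2$ (the edge term $\sum_{\{u,v\}\in E}(c_\Lambda(u)+c_\Lambda(v))^2$ is controlled by the degree bound). And the Kostochka--Thomason edge-density bound plays no role here; the exponent in $h^6\log h$ is $(h^2)^2$ from the squared padding parameter times $h^2\log h$ from the crossing-congestion lower bound.
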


In particular, the bound on $\lambda_1(G)$ shows that if $d_{\max}(G) \leq O(1)$, then recursive spectral partitioning (see \cite{spielman-teng}) finds an $O(\sqrt{n})$-vertex balanced separator in $G$.

\subsection{Additional applications} \label{sec:addl}

\medskip \noindent {\bf Treewidth approximations.} Bounding $\csobs(G)$ for rigs over $K_h$-minor-free graphs leads to some additional applications.  Combined with the rounding algorithm implicit in \pref{thm:spread} (and explicit in \cite{FHL08}), this yields an $O(h^2)$-approximation algorithms for the vertex uniform Sparsest Cut problem. In particular, it follows that if $G \in \rig(G_0)$ and $G_0$ excludes $K_h$ as a minor, then there is a polynomial-time algorithm that constructs a tree decomposition of $G$ with treewidth $O(h^2 \mathrm{tw}(G))$, where $\mathrm{tw}(G)$ is the treewidth of $G$. This result appears new even for string graphs. We refer to \cite{FHL08}.

\medskip \noindent {\bf Lipschitz extension.} The padded decomposability result of \pref{sec:pad} combines with the Lipschitz extension theory of \cite{LN05} to show the following. Suppose that $(G,\omega)$ is a conformal graph, where $G$ is a rig over some $K_h$-minor free graph. Then for every Banach space $Z$, subset $S \subseteq V_G$, and $L$-Lipschitz mapping $f : S \to Z$, there is an $O(h^2 L)$-Lipschitz extension $\tilde f : V_G \to Z$ with $\tilde f|_{S} = f$.  See \cite{MM16} for applications to flow and cut sparsifiers in such graphs.

\subsection{Preliminaries}

We use the notation $\R_+ = [0,\infty)$ and $\Z_+ = \Z \cap \R_+$.

All graphs appearing in the paper are finite and undirected unless stated otherwise. If $G$ is a graph, we use $V_G$ and $E_G$ for its edge and vertex sets, respectively. If $S \subseteq V_G$, then $G[S]$ is the induced subgraph on $S$. For $A,B \subseteq V_G$, we use the notation $E_G(A,B)$ for the set of all edges with one endpoint in $A$ and the other in $B$. Let $N_G(A) = A \cup \partial A$ denote the neighborhood of $A$ in $G$. We write $\dot{G}$ for the graph that arises from $G$ by subdividing every edge of $G$ into a path of length two.

If $(X,\dist)$ is a pseudo-metric space and $S,T \subseteq X$, we write $\dist(x,S) = \inf_{y \in S} \dist(x,y)$ and $\dist(S,T) = \inf_{x \in S, y\in T} \dist(x,y)$.

Finally, we employ the notation $A \lesssim B$ to denote $A \leq O(B)$, which means there exists a universal (unspecified) constant $C > 0$ for which $A \leq C \cdot B$.

\section{Vertex separators and conformal graph metrics} \label{sec:separators}

The following result is standard.  Recall the definition of the vertex expansion \eqref{eq:expansion}.

\begin{lemma}\label{lem:chomp} Suppose that every induced subgraph $H$ of $G$ satisfies $\phi_H \leq \frac{\alpha}{|V_H|}$. Then $G$ has a $\frac23$-balanced separator of size at most $\alpha$. \end{lemma}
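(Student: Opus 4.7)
The plan is to run a greedy peeling algorithm. Initialize $S = \emptyset$ and repeat the following step as long as some connected component $C$ of $G[V_G \setminus S]$ satisfies $|C| > \frac23 |V_G|$: apply the hypothesis to the induced subgraph $H = G[C]$ to obtain a set $U \subseteq C$ with $|U^\circ| \leq |C|/2$ and $|\partial_H U| \leq \alpha |U|/|C|$, and move $\partial_H U$ into $S$. When the loop terminates, every component of $G[V_G \setminus S]$ has size at most $\frac23 |V_G|$, so $S$ is by definition a $\frac23$-balanced separator.

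The correctness of a single peeling step relies on the definition of $\partial_H U$: every vertex of $U^\circ = U \setminus \partial_H U$ has all its $H$-neighbors inside $U$, so after removing $\partial_H U$ the set $U^\circ$ is disconnected from $C \setminus U$ inside $G[C \setminus \partial_H U]$. Consequently the former component $C$ splits into a small piece $U^\circ$ of size at most $|C|/2 \leq |V_G|/2 < \frac23 |V_G|$, which is henceforth ``frozen'' inside a permanently-small component and is never touched again, together with the remainder $C \setminus U$, which may still need further peeling in later iterations. In particular, the sets $U_t^\circ$ produced in distinct iterations are pairwise disjoint, whence $\sum_t |U_t^\circ| \leq |V_G|$.

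For the size bound I would charge each boundary $\partial_H U_t$ to the frozen mass $U_t^\circ$ it creates. Combining $|\partial_H U_t| \leq \alpha |U_t|/|C_t|$ with $|C_t| > \frac23 |V_G|$ and $|U_t| = |U_t^\circ| + |\partial_H U_t|$ and rearranging gives, assuming $\alpha$ is small compared to $|V_G|$ (else the lemma is trivial, since $S = V_G$ already beats the stated bound), an inequality of the shape $|\partial_H U_t| \leq c\,\alpha |U_t^\circ|/|V_G|$ for an absolute constant $c$. Summing over iterations using disjointness of the $U_t^\circ$ then yields $|S| \leq c\,\alpha$.

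The main (and only) obstacle is book-keeping: the most direct charging picks up a constant factor, so to match exactly the constant $\alpha$ claimed in the statement one must be slightly careful with the arithmetic — for instance by always peeling the smaller of the two sides $U$ and $V_H \setminus U$, or by absorbing the multiplicative constant into the stopping threshold and arguing that a $\frac12$-balanced separator of size $O(\alpha)$ yields a $\frac23$-balanced separator of the same order. No topological or metric content is used anywhere, which is why the excerpt labels the result ``standard''.
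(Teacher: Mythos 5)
The paper does not actually supply a proof of \pref{lem:chomp} --- it is dismissed as ``standard'' --- so there is nothing to compare against except the folklore argument, and your peeling procedure is exactly that argument. The core of your proof is sound: the sets $U_t$ (not just the $U_t^\circ$) produced in successive iterations are pairwise disjoint, since the next large component lives inside $C_t\setminus U_t$; each component of $G[U_t^\circ]$ has size at most $|C_t|/2\le |V_G|/2$ and is frozen; and the process terminates because $|C_{t+1}|\le |C_t|-|U_t|<|C_t|$. The cleanest accounting is to charge $\partial_H U_t$ to $U_t$ rather than to $U_t^\circ$: since $|\partial_H U_t|\le \alpha|U_t|/|C_t|$ and $|C_t|>\tfrac23|V_G|$, summing over the disjoint $U_t$ gives $|S|\le \tfrac{3}{2}\alpha$ directly, with no case split on the size of $\alpha$ (your version, charging to $U_t^\circ$, needs the reduction to $\alpha<|V_G|/3$ and yields $3\alpha$). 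The one thing I would flag is that your closing paragraph does not actually repair the constant: ``absorbing the multiplicative constant into the stopping threshold'' still only produces a separator of size $O(\alpha)$, and ``peeling the smaller side'' buys nothing because the definition of $\phi_H$ already forces $|U^\circ|\le |V_H|/2$. So what you have honestly proved is a $\tfrac23$-balanced separator of size at most $\tfrac32\alpha$, not $\alpha$. This discrepancy is immaterial for every use of the lemma in the paper (the constant is absorbed into $c_h$ in \pref{thm:main}), but if you want to claim the statement verbatim you should either exhibit an argument achieving the constant $1$ or note that the bound should read $O(\alpha)$.
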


Thus in the remainder of this section, we focus on bounding $\phi_G$. We remark on one other basic fact:  Consider a graph $G$, a partition $V_G = A \cup B$ and a subset $S \subseteq V$.  If $E_G(A \setminus S, B \setminus S)=\emptyset$, then \begin{equation}\label{eq:conductance} |S| \geq \phi_G \frac{|A| \cdot |B|}{|V_G|}\,. \end{equation}
Indeed, suppose that $|A| \leq |B|$.  Then, $\phi_G \leq \frac{|S|}{|A \cup S|} \leq \frac{|S|}{|A|} \frac{|V_G|}{|B|}\,.$

\subsection{Conformal graphs} \label{sec:conformal-width}

A conformal graph is a pair $(G,\omega)$ where $G$ is a connected graph and $\omega : V_G \to \R_+$. Associated to $(G,\omega)$, we define a distance function $\dist_{\omega}$ as follows. Assign a length $\frac{\omega(u)+\omega(v)}{2}$ to every $\{u,v\} \in E_G$; then $\dist_{\omega}$ is the induced shortest-path metric.
For $U \subseteq V_G$, we define $\diam_{\omega}(U) = \sup_{u,v \in U} \dist_{\omega}(u,v)$.

The extremal $L^1$-spread is a linear programming relaxation of the optimization \eqref{eq:expansion} defining $\phi_G$
(up to universal constant factors). In \pref{sec:vcon-rigs}, we establish the following result.

\begin{theorem}\label{thm:small-width} If $\hat G$ is a connected graph and $\hat G \in \rig(G)$ for some graph $G$ that excludes a $K_h$ minor, then \begin{equation}\label{eq:big-spread} \cspread_1(\hat G) \gtrsim \frac{n}{h\sqrt{m\log h}}\,, \end{equation} where $n=|V_{\hat G}|$ and $m=|E_{\hat G}|$. \end{theorem}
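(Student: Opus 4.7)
The plan is to prove \pref{thm:small-width} by passing through LP duality and then reducing a congested multi-commodity flow in $\hat G$ to one with bounded $\ell_2$ vertex congestion in $G$, where a ``no low-$\ell_2$-congestion flow'' theorem for $K_h$-minor-free graphs applies. First I would unpack the linear program defining $\cspread_1(\hat G)$: it is a packing LP in the conformal weights $\omega$, and its dual is a concurrent all-pairs multicommodity flow problem in $\hat G$ whose objective measures vertex congestion in an $L^1$-averaged sense. Proving \eqref{eq:big-spread} is therefore equivalent to the assertion that any multicommodity flow $F$ in $\hat G$ that routes a unit of flow between every ordered pair of vertices must have vertex congestion with $\ell_1$-average at least $\Omega(n \sqrt{m}/(h\sqrt{\log h}))$ in the appropriate normalization.

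Next I would carry out \matousek's reduction from $\hat G$ to $G$ in the present generality. For each $u \in V_{\hat G}$, fix a spanning tree $T_u$ of the connected region $R_u \subseteq V_G$ and a representative vertex $v_u \in R_u$. Given the hypothetical flow $F$ in $\hat G$, build a flow $F'$ in $G$ by replacing each transit through $u$ in an $\hat G$-path with a detour along $T_u$ between the relevant entry and exit vertices. The crucial observation, due to \matousek, is that even though the $\ell_\infty$ vertex congestion of $F'$ can be arbitrarily bad, its \emph{$\ell_2$} vertex congestion is controlled: the contribution to $\sum_{v\in V_G} \mathrm{cong}_{F'}(v)^2$ can be bounded using Cauchy--Schwarz against $|E_G|$ times the original congestion of $F$, since the flow through $v$ is a sum over the regions $R_u \ni v$ and each edge of $G$ participates in a controlled number of such regions.

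After producing such an $\ell_2$-bounded flow in $G$, I would invoke the flow-crossing framework of \cite{BLR08}. That machinery shows that in a $K_h$-minor-free graph, any multicommodity flow routing a substantial total demand must pay $\ell_2$ vertex congestion that scales with the demand divided by a function of $h$; quantitatively, the relevant bound loses a factor $h\sqrt{\log h}$, inherited from the Kostochka--Thomason bound on edge density (and the corresponding crossing-number estimate) for $K_h$-minor-free graphs. Composing the upper bound on $\ell_2$-congestion coming from the reduction with this lower bound, and tracking the $\sqrt{m}$ factor introduced when Cauchy--Schwarz converts $\ell_\infty$-style congestion into $\ell_2$, gives exactly $\cspread_1(\hat G) \gtrsim n/(h\sqrt{m \log h})$.

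The main obstacle will be the middle step: in \matousek's original setting $G$ is planar, and one can think concretely about how two routed paths cross in the plane, but here $G$ is only $K_h$-minor-free and one must substitute the abstract crossing/$\ell_2$-congestion duality of \cite{BLR08} for planar intuition. Verifying that the congestion accounting survives this generalization, with the right dependence on $h$ and on $m$, is where the quantitative work of \pref{sec:flows} must be concentrated; everything else (the LP-duality setup and the final assembly of the bound) is largely bookkeeping once the $\ell_2$-flow reduction is in place.
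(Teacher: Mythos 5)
Your overall architecture coincides with the paper's: dualize $\cspread_1(\hat G)$ to an all-pairs flow problem in $\hat G$ (\pref{thm:duality}), push the optimal flow into $G$ by routing through the regions, and invoke the crossing machinery of \cite{BLR08} for $K_h$-minor-free graphs (\pref{thm:blr}). One small correction at the duality step: the dual of the $L^1$-constrained spread is the $\ell_\infty$ vertex congestion, $\vcong_\infty(\hat G) = n\cdot \cspread_1(\hat G)$, not an ``$\ell_1$-averaged'' congestion --- though your later accounting treats it as $\ell_\infty$, so this is only a slip of terminology.

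The genuine gap is in the middle step. You claim the induced flow $F'$ in $G$ has controlled $\ell_2$ vertex congestion, bounded ``against $|E_G|$ times the original congestion.'' Neither half of this works. First, the $\ell_2$ congestion of $F'$ is \emph{not} controlled: the congestion at $x \in V_G$ is at most $\sum_{u:\, x\in R_u} c_\Lambda(u)$, so squaring and summing over $x$ produces terms $|R_u\cap R_{u'}|\, c_\Lambda(u)c_\Lambda(u')$, and $|R_u\cap R_{u'}|$ is unbounded (two regions may share arbitrarily many vertices of $G$; likewise a single vertex of $G$ may lie in up to $n$ regions, so ``each edge of $G$ participates in a controlled number of regions'' is false). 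This is exactly why the paper introduces the \emph{crossing congestion} $\cross_G(\check\Lambda,\check\f)$, which counts each intersecting pair of paths once rather than once per shared vertex: it lower-bounds $\sum_v c_{\Lambda}(v)^2$ for \emph{every} flow (so the \cite{BLR08} lower bound applies to it), while admitting an upper bound for the specific induced flow. Second, the correct upper bound (\pref{thm:mat}) is $(4|E_{\hat G}|+|V_{\hat G}|)\,\vcong_\infty(\hat G)^2$; the charge is placed on vertices and edges of $\hat G$, not of $G$. The mechanism is the crux of the whole argument: if two routed paths meet at a vertex of $G$, that vertex lies in $R_u\cap R_{u'}$ for some $u$ on one original path and $u'$ on the other, forcing $u=u'$ or $\{u,u'\}\in E_{\hat G}$, and the total charge at a vertex $u$ is at most $c_\Lambda(u)^2$ and at an edge $\{u,v\}$ at most $(c_\Lambda(u)+c_\Lambda(v))^2$. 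With $|E_G|$ in place of $m=|E_{\hat G}|$ the final bound would read $n/(h\sqrt{|E_G|\log h})$, which is useless: the host graph of a string graph may require $2^{\Omega(n)}$ intersection points, so $|E_G|$ is incomparably larger than $m$.
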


Recall that \pref{thm:small-width} completes the first step of our program for exhibiting small separators.  For the second step, we need to relate $\cspread_1(G)$ to $\csobs(G)$.  Before that, we restate the proof of \pref{thm:spread} from \cite{FHL08} in our language. (The proof presented below is also somewhat simpler, as it does not employ Menger's theorem as in \cite{FHL08}.)

\begin{theorem}[Restatement of \pref{thm:spread}] \label{thm:fhl} For any connected graph $G$, it holds that \begin{equation}\label{eq:fhl08} \tfrac12 \csobs(G) \leq \frac{1}{\phi_G} \leq 3 \, \csobs(G)\,. \end{equation} \end{theorem}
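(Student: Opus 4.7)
My plan is to prove the two inequalities in \eqref{eq:fhl08} separately. The lower bound will come from an explicit extremal construction, while the upper bound reduces, via a coarea-style manipulation, to a vertex Poincar\'e-type inequality that can be proved without invoking Menger's theorem (the simplification promised over \cite{FHL08}). Throughout, let $n = |V_G|$ and assume $\|\omega\|_{L^1(V_G)} = 1$, so $\sum_v \omega(v) = n$.

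For the lower bound $1/\phi_G \leq 3\csobs(G)$, I would fix an optimal set $U^* \subseteq V_G$ realizing $\phi_G = |\partial U^*|/|U^*|$ with $|U^{*\circ}| \leq n/2$, and concentrate the conformal metric on its separator. Concretely, set $\omega \seteq (n/|\partial U^*|)\,\1_{\partial U^*}$ so that $\|\omega\|_{L^1(V_G)} = 1$, and let $f(v) \seteq \dist_\omega(v, V_G \setminus U^*)$, which is $1$-Lipschitz by construction. Since every walk from the interior $U^{*\circ}$ to $V_G \setminus U^*$ must traverse some vertex of $\partial U^*$ whose weight $n/|\partial U^*|$ is charged to two incident edges, the function $f$ is forced to satisfy $f \equiv 0$ on $V_G \setminus U^*$ and $f \geq n/|\partial U^*|$ on $U^{*\circ}$. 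Evaluating $\sobs(V_G, \dist_\omega)$ by summing $|f(u)-f(v)|$ over the ordered pairs with one endpoint in each of $U^{*\circ}$ and $V_G \setminus U^*$ gives a lower bound of the form $2|U^{*\circ}|\,|V_G \setminus U^*|/(n\,|\partial U^*|)$; a short case analysis based on whether $\phi_G \leq 1/2$ (so $|U^{*\circ}|$ and $|V_G \setminus U^*|$ are each $\Omega(n)$) or $\phi_G > 1/2$ (handled by the trivial metric $\omega \equiv 1$ with a balanced $1$-Lipschitz $f$) then yields the constant $3$.

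For the upper bound $\csobs(G) \leq 2/\phi_G$, let $f$ be $1$-Lipschitz with respect to $\dist_\omega$. Since $\sobs$ is shift-invariant, I may assume $f$ has median $0$, so both $|\{f > 0\}|$ and $|\{f < 0\}|$ are at most $n/2$. Using the identity $\sum_{u,v}|f(u)-f(v)| = 2\int |\{f \leq t\}|\cdot|\{f > t\}|\,dt$ and bounding the smaller of the two level-set sizes by $n/2$ on each side of the median reduces the task to showing $\sum_v |f(v)| \leq n/\phi_G$. The Lipschitz property yields the pointwise bounds $f_+(v) \leq \dist_\omega(v, \{f \leq 0\})$ and $f_-(v) \leq \dist_\omega(v, \{f \geq 0\})$, so it further suffices to prove the following vertex Poincar\'e estimate: for every $T \subseteq V_G$ with $|T| \geq n/2$,
\[
\sum_v \dist_\omega(v, T) \;\leq\; \frac{1}{2\phi_G}\sum_v \omega(v).
\]

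The main obstacle will be this estimate---this is precisely where \cite{FHL08} invoke Menger's theorem and multi-commodity flow duality. My plan is to work with the sublevel sets $U_t = \{v : \dist_\omega(v, T) > t\}$; these lie in $V_G \setminus T$ and so have size at most $n/2$, and the vertex isoperimetric inequality gives $|U_t| \leq |\partial U_t|/\phi_G$. The $1$-Lipschitz property of $\dist_\omega(\cdot, T)$ ensures that every $v \in \partial U_t$ has a witness neighbor $u$ with $\omega(v)+\omega(u) \geq 2(\dist_\omega(v, T) - t)$, tying the boundary back to $\omega$. I expect the delicate step to be the accounting across threshold levels: a crude integration in $t$ over-counts by a factor depending on the maximum degree, so the argument must instead select a single well-chosen threshold---via a Cheeger-style extremal argument balancing interior volume against boundary mass---to produce a cut whose weighted contribution is bounded by $\sum_v \omega(v)$, and thereby close the inequality with the claimed constant.
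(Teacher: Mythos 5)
Both halves of your plan have genuine gaps.

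\emph{Lower bound.} Your choice of conformal metric $\omega = (n/|\partial U^*|)\,\1_{\partial U^*}$ and the distance function $f=\dist_\omega(\cdot,V_G\setminus U^*)$ is a reasonable (and correctly normalized) variant of the paper's construction, but the accounting and the case split do not work. First, $\phi_G\le 1/2$ does \emph{not} force $|V_G\setminus U^*|=\Omega(n)$: the constraint in \eqref{eq:expansion} is $|U^{*\circ}|\le n/2$, not $|U^*|\le n/2$, so the complement of $U^*$ can be a single vertex. Concretely, take a clique on $n-1$ vertices partitioned as $A\cup S$ with $|A|=n/2$, $|S|=n/2-1$, and one extra vertex $z$ adjacent to exactly the vertices of $S$. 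The optimal set is $U^*=A\cup S$ with $\partial U^*=S$, so $\phi_G=\frac{n/2-1}{n-1}<\tfrac12$, yet $|V_G\setminus U^*|=1$; your bound $2|U^{*\circ}|\,|V_G\setminus U^*|/(n|\partial U^*|)=O(1/n)$ is nowhere near $\frac{1}{3\phi_G}\approx \tfrac23$, and your second case does not apply. The pairs you discard --- those involving $\partial U^*$ --- carry essentially all of the spread in this regime. This is exactly why the paper uses \emph{two} test functions: $f_1$ (a three-level function separating $U^{\circ}$ from $\bar U$ across $\partial U$, which is what your $f$ amounts to once all pairs are counted) and $f_2$ (supported on a balanced bipartition of $\partial U$ itself), with the case split governed by $|\partial U|$ versus $|U^{\circ}|$ rather than by the value of $\phi_G$. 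Without an $f_2$-type function your argument cannot be closed even with full accounting: when $|\partial U^*|\approx|U^{*\circ}|\approx n/2$ and $|V_G\setminus U^*|=o(n)$, the quantity $\frac{1}{n^2}\sum_{u,v}|f(u)-f(v)|$ for your $f$ falls short of $\frac{1}{3\phi_G}$.

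\emph{Upper bound.} Here you have correctly identified the crux and then left it unproved: the vertex Poincar\'e estimate $\sum_v\dist_\omega(v,T)\le\frac{1}{2\phi_G}\sum_v\omega(v)$ is the entire content of the inequality, and your sketch of its proof (superlevel sets $U_t$, then $|U_t|\le|\partial U_t|/\phi_G$, then relate $\int|\partial U_t|\,dt$ to $\omega$) does, as you concede, lose a factor depending on the degrees, because a vertex $v$ can lie in $\partial U_t$ for a window of $t$ of length up to $\max_{u\sim v}\frac{\omega(u)+\omega(v)}{2}$, and $\sum_v\max_{u\sim v}\omega(u)$ is not controlled by $\sum_v\omega(v)$. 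Gesturing at ``a single well-chosen threshold'' is not an argument. The paper's route avoids the problem entirely and needs neither a median normalization nor a Poincar\'e lemma: for each $\theta$ it takes the \emph{fat level set} $S_\theta=\{v:|f(v)-\theta|\le\tfrac12\omega(v)\}$, observes via the Lipschitz condition that $E_G(A_\theta\setminus S_\theta,\,B_\theta\setminus S_\theta)=\emptyset$ where $A_\theta=\{f\le\theta\}$ and $B_\theta=\{f>\theta\}$, applies the two-sided isoperimetric fact \eqref{eq:conductance} (which lower-bounds the size of \emph{any} vertex set separating a bipartition, not merely $|\partial U|$), and integrates. The point is that $\int_\R|S_\theta|\,d\theta=\sum_v\omega(v)$ \emph{exactly}, since each $v$ contributes on an interval of length precisely $\omega(v)$, while $\int_\R|A_\theta||B_\theta|\,d\theta=\tfrac12\sum_{x,y}|f(x)-f(y)|$. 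You should replace your boundary sets $\partial U_t$ by these fat level sets; that is the missing idea.
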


\begin{proof} The right-hand inequality is straightforward: Suppose that $U \subseteq V_G$ witnesses $\phi_G = |\partial U|/|U|$. Let $\bar{U} = V_G \setminus U$ and $U^{\circ} = U \setminus \partial U$. Let $n=|V_G|$ and $s=|\partial U|$. Partition $\partial U = S_1 \cup S_2$ so that $|S_1|=\lceil s/2\rceil$ and $S_2 = \lfloor s/2\rfloor$.

Now let $\omega = \1_{\partial U}$ and
 define two maps
\begin{align*} f_1(v) = \begin{cases} \frac{-1}{2} & v \in U^{\circ}\\ 0 & v \in \partial U \\ \frac{1}{2} & v \in \bar{U} \end{cases}  \qquad\qquad f_2(v) = \begin{cases} \frac{-1}{2} & v \in S_1 \\ 0 & v \notin \partial U \\ \frac{1}{2} & v \in S_2\,. \end{cases} \end{align*}

Since $\partial U$ separates $U^{\circ}$ from $\bar{U}$, the maps $f_1, f_2 : (V_G, \dist_{\omega}) \to \R$ are $1$-Lipschitz, and \begin{align*} \sum_{u,v \in V_G} |f_1(u)-f_1(v)| &= 2|U^{\circ}| \cdot |\bar{U}| + |\partial U|(|U^{\circ}|+|\bar{U}|) \\ \sum_{u,v \in V_G} |f_2(u)-f_2(v)| &= 2 |S_1| \cdot |S_2| + |\partial U| \left(|U^{\circ}| + |\bar{U}|\right)\,. \end{align*}
 If $|\partial U| \geq |U^{\circ}|$, then the map $f_2$ satisfies $\frac{1}{|V_G|^2} \sum_{x,y \in V_G} |f_2(x)-f_2(y)| \geq \frac{1}{3\phi_G}.$ Otherwise, the map $f_1$ satisfies $\frac{1}{|V_G|^2} \sum_{x,y \in V_G} |f_1(x)-f_1(y)| \geq \frac{1}{2\phi_G}.$ In either case, we have shown that $\csobs(G) \geq \frac{1}{3\phi_G}$.

\medskip \paragraph{Left-hand inequality} Now we establish the more interesting bound. Suppose that $(G,\omega)$ is a conformal metric with $\|\omega\|_{L^1(V_G)}\leq 1$ and $f : (V_G,\dist_{\omega}) \to \R$ is a $1$-Lipschitz mapping with \[ \frac{1}{|V_G|^2} \sum_{x,y \in V_G} |f(x)-f(y)| = \csobs(G)\,. \]

For $\theta \in \R$, define the three sets \begin{align*} A_{\theta} &= \left\{ v \in V_G : f(v) \leq \theta \right\} \\ S_{\theta} &= \left\{ v \in V_G : |f(v)-\theta| \leq \frac12 \omega(v)\right\} \\ B_{\theta} &= \left\{ v \in V_G : f(v) > \theta \right\}\,. \end{align*} Observe that if $u \in A_{\theta} \setminus S_{\theta}$ and $v \in B_{\theta} \setminus S_{\theta}$, then $f(u) \leq \theta - \frac12 \omega(u)$ and $f(v) > \theta + \frac12 \omega(v)$, therefore $|f(u)-f(v)| > \frac{\omega(u)+\omega(v)}{2}$.  But if $\{u,v\} \in E_G$, then since $f$ is $1$-Lipschitz, it should hold that $|f(u)-f(v)| \leq \dist_{\omega}(u,v) = \frac{\omega(u)+\omega(v)}{2}$. We conclude that $E_G(A_{\theta} \setminus S_{\theta}, B_{\theta} \setminus S_{\theta})=\emptyset$. Therefore \eqref{eq:conductance} yields \begin{align*} \int_{\R} |S_{\theta}|\,d\theta
\geq \frac{\phi_G}{|V_G|} \int_{\R} |A_{\theta}| \cdot |B_{\theta}|\,d\theta =\frac{\phi_G}{2|V_G|} \sum_{x,y \in V_G} |f(x)-f(y)| = \frac{\phi_G |V_G|}{2}\, \csobs(G)\,. \end{align*}

On the other hand, \begin{align*} \int_{\R} |S_{\theta}|\,d\theta \leq \sum_{v \in V_G} \int_{\R} \1_{B(f(v), \omega(v)/2)}(\theta) \,d\theta = \sum_{v \in V_G} \omega(v) = |V_G| \cdot \|\omega\|_{L^1(V_G)}\,, \end{align*} where we have used the notation $B(x,r) = \{ y \in \R : |x-y| \leq r\}$. \end{proof}

In the next section, we prove the following theorem (though the main technical arguments appear in \pref{sec:careful}).

\begin{theorem}\label{thm:embeddings} If $\hat G \in \rig(G)$ and $G$ excludes a $K_h$ minor, then \begin{equation}\label{eq:khline} \cspread_1(\hat G) \leq O(h^2)\ \csobs(\hat G)\,. \end{equation} \end{theorem}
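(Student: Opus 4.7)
The plan is to reduce \pref{eq:khline} to the existence of a well-padded random partition of $\hat G$ under the conformal metric that nearly maximizes $\cspread_1(\hat G)$, and then convert such a partition into a $1$-Lipschitz map with large average pairwise gap via Rao-style random signs. Concretely, fix $\omega:V_{\hat G}\to\R_+$ with $\|\omega\|_{L^1(V_{\hat G})}\le 1$ and $\spread(V_{\hat G},\dist_\omega)\ge \tfrac{1}{2}\,\cspread_1(\hat G)$; the task is to exhibit a $1$-Lipschitz $f:(V_{\hat G},\dist_\omega)\to\R$ whose average pairwise gap is $\gtrsim \cspread_1(\hat G)/h^2$.

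The core technical step, the content of \pref{sec:careful}, is to show that for every $\Delta>0$ there is a random partition $\mathcal P_\Delta$ of $V_{\hat G}$ such that (i) every cluster has $\dist_\omega$-diameter at most $\Delta$, and (ii) for each $v\in V_{\hat G}$, the ball $B_\omega(v,\Omega(\Delta/h^2))$ lies inside a single cluster with probability at least $1/2$. I would construct $\mathcal P_\Delta$ by adapting the iterative Klein--Plotkin--Rao algorithm: repeatedly select a center, carve out a ball of random radius drawn from a suitable sub-interval of $[0,\Delta/4]$, remove the vertices in a thin shell just outside that ball, declare the interior a cluster, and recurse on the remaining vertices. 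The removed shell vertices are then absorbed into clusters of their neighbors, giving an \emph{induced} partition in the sense highlighted after \pref{lem:strict-minor}.

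The main obstacle is the failure analysis for padding. In the classical KPR setting, if some vertex is left unpadded across $h-1$ nested rounds, one directly extracts a $K_h$ minor of the host graph; here $\hat G$ itself can contain arbitrary minors, so that conclusion would be vacuous. Instead I would arrange the extraction so that the constructed branch vertices are linked by internally vertex-disjoint length-two paths, exhibiting a subdivision of $K_h$ as a \emph{strict} minor of $\hat G$; by \pref{lem:strict-minor} this forces $K_h$ to be a minor of $G$, contradicting the hypothesis on $G$. The vertex buffers removed between successive rounds and the induced-partition constraint are precisely what allow the candidate branch sets and connecting paths to be made disjoint inside $\hat G$.

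Once the padded decomposition is available at all scales, a standard Rao-style argument completes the proof. Sample independent uniform $\pm 1$ signs $\{\varepsilon_C\}$ over the clusters $C$ of $\mathcal P_\Delta$ and set
\[
 f_\Delta(v)=\varepsilon_{\mathcal P_\Delta(v)}\cdot \min\!\left\{\dist_\omega\!\left(v,\,V_{\hat G}\setminus \mathcal P_\Delta(v)\right),\ \Delta\right\}.
\]
Each $f_\Delta$ is $1$-Lipschitz with respect to $\dist_\omega$, and for any pair $u,v$ with $\dist_\omega(u,v)\in[\Delta/4,\Delta/2]$, property (ii) yields $\E\,|f_\Delta(u)-f_\Delta(v)|\gtrsim \Delta/h^2$. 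Combining these maps appropriately across dyadic scales (as in the classical metric-embedding literature) produces a single $1$-Lipschitz map realizing
\[
 \csobs(\hat G)\ \ge\ \sobs(V_{\hat G},\dist_\omega)\ \gtrsim\ \frac{\cspread_1(\hat G)}{h^2},
\]
which is \pref{eq:khline}. The only non-routine ingredient is the strict-minor extraction in the failure analysis; everything else is standard Rao/FHL-style padded-decomposition machinery.
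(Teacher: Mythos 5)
Your main technical step --- an iterative KPR-style random separator whose failure produces a subdivision of $K_h$ as a strict minor of $\hat G$, hence a $K_h$ minor of $G$ --- is exactly the route the paper takes (\pref{thm:pad}, \pref{lem:careful-minor}, \pref{sec:careful}), and your padding parameters match. The gap is in the last step, where you convert padded partitions into a single $1$-Lipschitz map $f:V_{\hat G}\to\R$ with large average gap. Your per-scale analysis only controls pairs with $\dist_\omega(u,v)\in[\Delta/4,\Delta/2]$, and you then appeal to ``combining these maps across dyadic scales.'' For a real-valued target this does not work as stated: summing $L$ dyadic-scale maps and renormalizing to keep the Lipschitz constant equal to $1$ costs a factor of $L\approx\log(\mathrm{diam}/\min\text{-distance})$, which is precisely the $\log$ loss of Matou\v{s}ek's argument that this theorem is meant to remove. (The classical Rao/Bourgain combination avoids this only because the scales go into \emph{separate coordinates} of $\ell_1$ or $\ell_2$, which is not available when bounding $\sobs$.) Moreover, a single scale genuinely cannot suffice in general: if the space is concentrated --- say all but a vanishing fraction of points lie in one ball of radius $\spread/4$, with the spread carried by a few very distant points --- then at every fixed scale $\Delta$ the fraction of pairs at distance $\approx\Delta$ is tiny, and your $f_\Delta$ has small average gap for every $\Delta$.

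The missing ingredient is the dichotomy of \pref{lem:rab} (Rabinovich's argument): either some ball $B(x_0,\spread/4)$ contains half the points, in which case $f(x)=d(x,B(x_0,\spread/4))$ is $1$-Lipschitz and already achieves average gap $\geq\spread/4$ (\pref{lem:rab1}), handling all the far pairs at once regardless of their scale; or no such ball exists, in which case at least half of all \emph{pairs} (counted with multiplicity, not weighted by distance) are at distance $>\spread/4$, and then the random-coloring map at the \emph{single} scale $\Delta=\spread/4$ gives average gap $\gtrsim\spread/\alpha$. With that case analysis in place of the multi-scale combination, your argument closes and coincides with the paper's proof.
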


Combining \pref{lem:chomp} with \pref{thm:small-width}, \pref{thm:fhl}, and \pref{thm:embeddings} yields a proof of \pref{thm:main}. Indeed, suppose that $\hat G \in \rig(G)$ and $G$ excludes a $K_h$ minor. Let $n=|V_{\hat G}|$ and $m=|E_{\hat G}|$. Then, \[ \phi_{\hat G} \stackrel{\eqref{eq:fhl08}}{\leq} \frac{2}{\csobs({\hat G})} \stackrel{\eqref{eq:khline}}{\lesssim} \frac{h^2}{\cspread_1(\hat G)} \stackrel{\eqref{eq:big-spread}}{\lesssim} \frac{h^3 \sqrt{\smash[b]{m \log\, h}}}{n}\,, \] completing the proof in light of \pref{lem:chomp}.

\subsection{Padded partitions and random separators} \label{sec:pad}

Let $(X,d)$ be a finite metric space.  For $x \in X$ and $R \geq 0$, define the closed ball \[ B_d(x,R) = \{ y \in X :d(x,y) \leq R \}\,. \] If $P$ is a partition of $X$ and $x \in X$, we write $P(x)$ for the set of $P$ containing $x$.

Say that a partition $P$ is $\Delta$-bounded if $S \in P \implies \diam(S) \leq \Delta$.

\begin{definition} A {\em random} partition $\bm{P}$ of $X$ is $(\alpha,\Delta)$-padded if it is almost surely $\Delta$-bounded and for every $x \in X$, \[ \Pr\left[B_d\left(x,\tfrac{\Delta}{\alpha}\right) \subseteq \bm{P}(x)\right] \geq \tfrac12\,. \] \end{definition}

The following result is essentially contained in \cite{Rab08} (see also \cite[Thm 4.4]{BLR08}). We recall the argument here since the exact statement we need has not appeared.

\begin{lemma}\label{lem:rab} Let $(X,d)$ be a finite metric space.
If $(X,d)$ admits an $(\alpha, \spread(X,d)/4)$-padded partition, then \[ \spread(X,d) \leq 16\alpha\cdot \sobs(X,d)\,. \] \end{lemma}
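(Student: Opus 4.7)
The plan is to construct a random $1$-Lipschitz function $f\colon X \to \R$ out of the padded partition, and to show that its expected pairwise $L^{1}$-variation over uniform $(x,y)\in X\times X$ lower-bounds $\sobs(X,d)$ by $\spread(X,d)/(16\alpha)$.

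Let $\Delta = \spread(X,d)/4$, and let $\bm{P}$ be the hypothesized $(\alpha,\Delta)$-padded random partition. Independently of $\bm{P}$, assign to each cluster $C \in \bm{P}$ a uniform random sign $\sigma_C \in \{-1,+1\}$, and define
\[
f(x) = \sigma_{\bm{P}(x)}\cdot d(x, X\setminus \bm{P}(x)).
\]
First I would verify that $f$ is $2$-Lipschitz with respect to $d$: within a cluster the signs coincide and $f$ reduces to a difference of distance-to-set functions (hence is $1$-Lipschitz), while across clusters one uses $d(x,X\setminus\bm{P}(x))\le d(x,y)$ since $y\notin\bm{P}(x)$, and symmetrically for $y$, to bound $|f(x)-f(y)| \le |f(x)|+|f(y)| \le 2d(x,y)$. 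Consequently $f/2$ is admissible in the definition of $\sobs$, giving
\[
\sobs(X,d) \;\ge\; \frac{1}{2|X|^{2}}\,\E_{\bm{P},\sigma}\sum_{x,y \in X}|f(x)-f(y)|.
\]

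The crucial estimate is a per-pair lower bound on $\E|f(x)-f(y)|$. For $(x,y)$ lying in distinct clusters of $\bm{P}$, the independence of $\sigma_{\bm{P}(x)}$ and $\sigma_{\bm{P}(y)}$ gives
\[
\E_\sigma|f(x)-f(y)| = \max\bigl(d(x,X\setminus\bm{P}(x)),\,d(y,X\setminus\bm{P}(y))\bigr).
\]
Combined with the padding property $\Pr[d(x,X\setminus\bm{P}(x))\ge \Delta/\alpha]\ge 1/2$, this yields $\E|f(x)-f(y)|\ge \Delta/(2\alpha)$. Since $\bm{P}$ is $\Delta$-bounded, every pair with $d(x,y)>\Delta$ is forced into distinct clusters, so this estimate applies to every such ``long'' pair.

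Finally I would aggregate. The normalization $\Delta = \spread(X,d)/4$ enters decisively here: short pairs contribute at most $\Delta\,|X|^{2} = \spread(X,d)\,|X|^{2}/4$ to $\sum_{x,y} d(x,y) = \spread(X,d)\,|X|^{2}$, so long pairs carry at least three quarters of the spread mass. Combining the per-long-pair bound $\Delta/(2\alpha)$ with this mass comparison and the $1/2$ from the Lipschitz normalization produces $\sobs(X,d) \ge \spread(X,d)/(16\alpha)$. The main obstacle lies precisely at this final aggregation: the per-pair bound $\Delta/(2\alpha)$ is constant across long pairs and does not scale with $d(x,y)$, so some care is required to thread together the Lipschitz normalization, the padding factor $1/(2\alpha)$, and the $\spread/\Delta = 4$ accounting in order to obtain the clean constant $16$. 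A convenient reformulation is to lower-bound $\sum_{(x,y)} \E|f(x)-f(y)|$ by $\Delta/(2\alpha)$ times the number of long pairs and then observe that this number is large in the weighted sense dictated by the spread normalization.
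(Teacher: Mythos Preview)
Your random function and the per-pair estimate are both fine (the paper uses $d(x,\bm{S})$ with $\bm{S}$ the union of ``$1$''-labelled clusters rather than your signed $d(x,X\setminus\bm{P}(x))$, but the two constructions are essentially interchangeable). The gap is exactly where you flagged it: the aggregation.

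You correctly observe that long pairs carry at least $\tfrac34$ of the total spread mass $\sum_{x,y}d(x,y)$, but your per-pair lower bound is the \emph{constant} $\Delta/(2\alpha)$, which does not scale with $d(x,y)$. So what you actually need is a lower bound on the \emph{count} $N_{\mathrm{long}}$ of long pairs, and the spread-mass statement does not provide one. Take $n-1$ coincident points together with a single outlier at distance $L$: then $\spread(X,d)\asymp L/n$, hence $\Delta\asymp L/n$, yet $N_{\mathrm{long}}\asymp n$. Your chain of inequalities gives only
\[
\sobs(X,d)\;\gtrsim\;\frac{1}{n^{2}}\cdot n\cdot\frac{\Delta}{\alpha}\;\asymp\;\frac{L}{\alpha n^{2}},
\]
a factor of $n$ short of the target $\spread/\alpha\asymp L/(\alpha n)$. (In this particular example your function $f$ actually succeeds, because $d(x,X\setminus\bm{P}(x))=L\gg\Delta/\alpha$; but your argument only ever extracts the padding guarantee $\Delta/\alpha$, and that is what breaks.) The phrase ``large in the weighted sense'' in your last paragraph conflates mass with count.

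The paper closes this gap with a dichotomy you are missing. If some ball $B_d(x_0,\spread/4)$ contains at least half of $X$, the deterministic $1$-Lipschitz map $x\mapsto d\bigl(x,B_d(x_0,\spread/4)\bigr)$ already yields $\sobs\ge\spread/4$ with no partition needed. Otherwise every such ball is light, which forces $N_{\mathrm{long}}\ge |X|^{2}/2$; plugging this count into essentially your per-pair bound then finishes. Your framework is salvageable with exactly this case split.
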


The proof breaks into two cases whose conjunction yields \pref{lem:rab}.

\begin{lemma}\label{lem:rab1} If there is an $x_0 \in X$ such that $|B_{d}(x_0, \spread(X,d)/4)| \geq \tfrac12 |X|$, then $\sobs(X,d) \geq \frac14 \spread(X,d)$. \end{lemma}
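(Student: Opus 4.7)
The plan is to use the single $1$-Lipschitz function $f(x) \seteq d(x,x_0)$ as the witness for $\sobs(X,d)$, and then argue that the hypothesis forces $f$ to concentrate most of its mass outside the ball $B \seteq B_d(x_0, \spread(X,d)/4)$, which in turn forces the pairwise deviations of $f$ across $B$ to be large.

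First I would observe that $f$ is $1$-Lipschitz and that by the triangle inequality
\[
\spread(X,d) = \frac{1}{|X|^2} \sum_{x,y \in X} d(x,y) \leq \frac{2}{|X|} \sum_{x \in X} f(x),
\]
so $\sum_{x \in X} f(x) \geq \tfrac12 |X| \spread(X,d)$. Next I would use the hypothesis $|B| \geq |X|/2$ together with the fact that $f(y) \leq \spread(X,d)/4$ for all $y \in B$, which bounds the contribution of $B$ to this sum by $|B|\, \spread(X,d)/4$. Subtracting, one obtains
\[
\sum_{x \notin B} \bigl(f(x) - \tfrac14 \spread(X,d)\bigr) \geq \tfrac12 |X| \spread(X,d) - |B| \cdot \tfrac14 \spread(X,d) - (|X|-|B|) \tfrac14 \spread(X,d) = \tfrac14 |X| \spread(X,d).
\]

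The third step is to lower-bound $\sobs$ by pairing points in $B$ against points outside $B$. Since $f(x) > \spread(X,d)/4$ for $x \notin B$ and $f(y) \leq \spread(X,d)/4$ for $y \in B$, we have $|f(x)-f(y)| \geq f(x) - \spread(X,d)/4$ for such a pair, and summing and doubling (by symmetry of the ordered-pair sum) yields
\[
\sum_{x,y \in X} |f(x)-f(y)| \geq 2 |B| \sum_{x \notin B}\bigl(f(x)-\tfrac14 \spread(X,d)\bigr) \geq 2 \cdot \tfrac12 |X| \cdot \tfrac14 |X| \spread(X,d) = \tfrac14 |X|^2 \spread(X,d),
\]
which after dividing by $|X|^2$ gives $\sobs(X,d) \geq \tfrac14 \spread(X,d)$.

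There is no real obstacle here: the only point requiring care is bookkeeping the two uses of the hypothesis $|B| \geq |X|/2$, namely to cap the in-ball contribution to $\sum_x f(x)$ and to supply enough pairing partners in $B$ for the final double-counting. The rest is the triangle inequality for $f(x)=d(x,x_0)$ and averaging.
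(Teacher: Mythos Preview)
Your proof is correct and follows essentially the same approach as the paper: both use a distance function as the $1$-Lipschitz witness, invoke the triangle inequality to get $\sum_x d(x,x_0) \geq \tfrac12 |X|\,\spread(X,d)$, and then pair points in $B$ against points outside $B$, using $|B|\geq |X|/2$ for the final count. The only difference is that the paper takes $f(x)=d(x,B)$ whereas you take $f(x)=d(x,x_0)$; these differ by at most $\spread(X,d)/4$, and your choice makes the bookkeeping slightly cleaner and delivers the stated constant $\tfrac14$ directly. (As a minor remark, your intermediate inequality $\sum_{x\notin B}\bigl(f(x)-\tfrac14\spread\bigr)\geq \tfrac14|X|\spread$ does not actually use $|B|\geq |X|/2$, since the $|B|$ terms cancel; the hypothesis is only needed in the pairing step.)
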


\begin{proof} Let $B=B_d(x_0, \spread(X,d)/4)$. Define $f : X \to \R$ by $f(x)= d(x,B)$. This map is $1$-Lipschitz, and moreover \begin{align*} \sum_{x,y \in X} |f(x)-f(y)| \geq \sum_{\substack{x \in B \\ y \notin B}} d(y,B) \geq \frac{|X|}{2} \sum_{y \in X} d(y,B) \geq \frac{|X|}{2} \sum_{y \in X} \left[d(y,x_0)-\frac{\spread(X,d)}{4}\right]. \end{align*} On the other hand, \begin{align*} \sum_{x,y \in X} d(x,y) \leq \sum_{x,y \in X} \left[d(x,x_0)+d(y,x_0)\right] = 2 |X| \sum_{y \in X} d(y, x_0)\,. \end{align*} Combining the two preceding inequalities yields \[ \sobs(X,d) \geq \frac{1}{|X|^2} \sum_{x,y \in X} |f(x)-f(y)| \geq -\frac{\spread(X,d)}{8} + \frac1{4|X|^2} \sum_{x,y\in X} d(x,y) = \frac{\spread(X,d)}{4}\,.\qedhere \] \end{proof}

\begin{lemma} If $|B_d(x, \spread(X,d)/4)| < \frac12 |X|$ for all $x \in X$, then $\sobs(X,d) \geq \frac{1}{16\alpha} \spread(X,d)$. \end{lemma}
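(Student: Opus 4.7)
The plan is to use the padded partition to build a random 1-Lipschitz function whose expected pairwise variation captures a $1/O(\alpha)$ fraction of $\spread(X,d)$. Set $\Delta = \spread(X,d)/4$ and $\tau = \Delta/\alpha$. Let $\bm{P}$ be an $(\alpha,\Delta)$-padded random partition, and independently of $\bm{P}$ assign each cluster $C \in \bm{P}$ a uniform random label $\sigma_C \in \{0,1\}$. Define
\[
f(x) = \sigma_{\bm{P}(x)} \cdot \min\bigl(d(x,X\setminus \bm{P}(x)),\; \tau\bigr).
\]
A short case analysis---splitting on whether $x$ and $y$ share a cluster---shows that $f$ is almost surely $1$-Lipschitz with respect to $d$: within a common cluster this follows from the Lipschitz property of the truncated distance $z \mapsto d(z,X\setminus \bm{P}(z)) \wedge \tau$, and across clusters $|f(x)-f(y)| \leq \max(g(x),g(y)) \leq d(x,y)$, where $g(z) = d(z,X\setminus \bm{P}(z)) \wedge \tau$ and $y \in X \setminus \bm{P}(x)$ already forces $g(x)\leq d(x,y)$.

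The heart of the argument is a lower bound on $\mathbb{E}|f(x)-f(y)|$ for pairs $(x,y)$ that $\bm{P}$ separates almost surely. Conditioning on $\bm{P}$ with $\bm{P}(x)\neq \bm{P}(y)$, the two relevant signs are independent Bernoulli$(\nicefrac12)$, and averaging over the four sign configurations gives $\mathbb{E}_{\sigma}|f(x)-f(y)| = \tfrac12 \max(g(x),g(y))$. Two facts then combine: first, because $\bm{P}$ is $\Delta$-bounded, every pair with $d(x,y) > \Delta$ surely satisfies $\bm{P}(x) \neq \bm{P}(y)$; second, the padding event $B_d(x,\tau) \subseteq \bm{P}(x)$, of probability at least $\nicefrac12$, forces $g(x) = \tau$. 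Together these imply $\mathbb{E}|f(x)-f(y)| \geq \tau/4$ whenever $d(x,y) > \Delta$.

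To conclude, the Case 2 hypothesis $|B_d(x,\Delta)| < |X|/2$ supplies at least $|X|/2$ points $y$ with $d(x,y) > \Delta$ for each $x \in X$, yielding at least $|X|^2/2$ ordered ``far'' pairs. Summing and dividing by $|X|^2$,
\[
\sobs(X,d) \;\geq\; \frac{1}{|X|^2} \mathbb{E}\!\sum_{x,y \in X} |f(x)-f(y)| \;\geq\; \frac{1}{|X|^2} \cdot \frac{|X|^2}{2} \cdot \frac{\tau}{4} \;=\; \frac{\tau}{8} \;=\; \frac{\spread(X,d)}{32\,\alpha},
\]
and extracting a realization of $(\bm{P},\sigma)$ that attains this expectation exhibits a deterministic 1-Lipschitz witness for $\sobs(X,d)$.

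The only obstacle is arithmetic rather than structural: the three independent $\nicefrac12$ losses (padding probability, sign averaging, and far-pair count) accumulate to $1/(32\alpha)$, a factor of two short of the stated $1/(16\alpha)$. Recovering the claimed constant will require tightening one of these steps---for instance, using the full strength of $\mathbb{E}\max(g(x),g(y))$ rather than the crude $\tau/2 \cdot \Pr[x\text{ padded}]$, or slightly refining the choice of truncation radius $\tau$---but neither refinement alters the overall structure of the argument.
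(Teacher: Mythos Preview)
Your argument is correct and follows essentially the same route as the paper: draw an $(\alpha,\Delta)$-padded random partition with $\Delta=\spread(X,d)/4$, assign independent $\{0,1\}$ labels to the clusters, and build a random $1$-Lipschitz map whose expected variation on ``far'' pairs (those with $d(x,y)>\Delta$) is at least $\Delta/(4\alpha)$. The only difference is cosmetic: the paper takes $\bm F(x)=d(x,\bm S)$ where $\bm S$ is the union of the label-$1$ clusters, whereas you take $f(x)=\sigma_{\bm P(x)}\cdot\bigl(d(x,X\setminus\bm P(x))\wedge\tau\bigr)$. Both are $1$-Lipschitz for the same reason (distance-to-a-set), and both recover exactly the per-pair bound $\E|f(x)-f(y)|\geq\tau/4=\spread/(16\alpha)$ on far pairs via the padding event and the sign averaging.

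Regarding the constant: your concern is not a defect of your argument. The paper's own proof produces the same per-pair bound $\spread/(16\alpha)$ and then combines it with the far-pair count $\geq|X|^2/2$; carrying that factor through gives $\spread/(32\alpha)$, matching what you obtain. The stated $1/(16\alpha)$ in the lemma appears to drop this last factor of $1/2$. In any case the discrepancy is immaterial, since only $\sobs(X,d)\gtrsim\spread(X,d)/\alpha$ is used downstream.
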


\begin{proof} Let $\bm{P}$ be an $(\alpha,\spread(X,d)/4)$-padded partition. Let $\bm{\sigma} : \bm{P} \to \{0,1\}$ be a map chosen uniformly at random conditioned on $\bm{P}$. Define $\bm{S} = \left\{ x \in X : \bm{\sigma}(\bm{P}(x)) = 1 \right\}$ and $\bm{F} : X \to \R$ by $\bm{F}(x) = d(x,\bm{S})$. Note that $\bm{F}$ is almost surely $1$-Lipschitz. Moreover, observe that \begin{equation}\label{eq:far} B_d(x, \spread(X,d)/4\alpha) \subseteq \bm{P}(x)  \wedge \bm{\sigma}(x)=0 \implies d(x,\bm{S}) \geq \frac{\spread(X,d)}{4\alpha}\,. \end{equation}

Therefore if $x,y \in X$ satisfy $d(x,y) > \spread(X,d)/4$, then \eqref{eq:far} and independence yields \[ \E\left[|\bm{F}(x)-\bm{F}(y)|\right] \geq \Pr[\bm{\sigma}(\bm{P}(x)) \neq \bm{\sigma}(\bm{P}(y))]\cdot \frac12 \frac{\spread(X,d)}{4\alpha}\,. \] By assumption, $\left|\left\{ (x,y) : d(x,y) > \spread(X,d)/4 \right\}\right| \geq \frac12 |X|^2$, hence \[ \sobs(X,d) \geq \frac{1}{|X|^2} \sum_{x,y \in X} \E\left[|\bm{F}(x)-\bm{F}(y)|\right] \geq \frac{\spread(X,d)}{16\alpha}\,.\qedhere \] \end{proof}

In order to produce a padded partition, we will construct an auxiliary random object. Let $(G,\omega)$ be a conformal graph. Define the {\em skinny ball:}  For $c \in V_G$ and $R \geq 0$, \[ {\cB}_{\omega}(c,R) = \left\{ v \in V_G : \dist_{\omega}(c,v) < R - \frac12 \omega(v) \right\},\\ \]

Say that a random subset $\bm{S} \subseteq V$ is an {\em $(\alpha,\Delta)$-random separator} if the following two conditions hold: \begin{enumerate} \item For all $v \in V_G$ and $R \geq 0$, \[ \Pr[\cB_{\omega}(v,R) \cap \bm{S} = \emptyset] \geq 1- \alpha \frac{R}{\Delta}\,. \] \item Almost surely every connected component of $G[V\setminus \bm{S}]$ has diameter at most $\Delta$ (in the metric $\dist_{\omega}$). \end{enumerate}

\begin{lemma}\label{lem:padsep} If $(G,\omega)$ admits an $(\alpha,\Delta)$-random separator, then $(V_G, \dist_{\omega})$ admits an $(8\alpha,\Delta)$-padded partition. \end{lemma}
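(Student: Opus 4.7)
The plan is to build the partition directly from the separator: let $\bm{P}$ be the partition whose parts are the connected components of $G[V_G \setminus \bm{S}]$, together with one singleton $\{v\}$ for every $v \in \bm{S}$. Property~(2) of the random separator immediately gives that $\bm{P}$ is almost surely $\Delta$-bounded in $\dist_\omega$, so the task reduces to verifying the padding condition at radius $\Delta/(8\alpha)$.

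The geometric heart of the argument is a ``path-closure'' property of skinny balls. If $x = v_0, v_1, \ldots, v_k$ is a shortest path in the conformal metric, a telescoping calculation using the edge lengths $(\omega(v_{i-1})+\omega(v_i))/2$ yields
\[
\dist_\omega(x,v_i) + \tfrac{\omega(v_i)}{2} \;=\; \tfrac{\omega(x)}{2} + \sum_{j=1}^{i} \omega(v_j),
\]
whose right-hand side is nondecreasing in $i$. Hence $v_k \in \cB_\omega(x,R)$ forces $v_i \in \cB_\omega(x,R)$ for every $0 \leq i \leq k$. Combined with the hypothesis $\cB_\omega(x,R) \cap \bm{S} = \emptyset$, this shows that any such $y = v_k$ is joined to $x$ by a path in $G[V_G \setminus \bm{S}]$, and hence lies in the part $\bm{P}(x)$.

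To extract the padding, I would set $R := \Delta/(2\alpha)$, so that property~(1) gives $\Pr[\cB_\omega(x,R) \cap \bm{S} = \emptyset] \geq 1/2$, and show that on this event $B_{\dist_\omega}(x, R/4) \subseteq \bm{P}(x)$, with a case split on $\omega(x)$. In the ``heavy'' case $\omega(x) \geq 2R$, the inequality $\dist_\omega(x,y) \geq \omega(x)/2 \geq R > R/4$ for $y \neq x$ collapses $B_{\dist_\omega}(x,R/4)$ to $\{x\}$, so the inclusion is trivial. Otherwise $\omega(x) < 2R$, so $x \in \cB_\omega(x,R)$ and therefore $x \notin \bm{S}$; and for any $y \neq x$ with $\dist_\omega(x,y) \leq R/4$, the trivial bound $\omega(y)/2 \leq \dist_\omega(x,y)$ gives $\omega(y)/2 + \dist_\omega(x,y) \leq R/2 < R$, i.e., $y \in \cB_\omega(x,R)$, and the path-closure step places $y$ in $\bm{P}(x)$. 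Since $R/4 = \Delta/(8\alpha)$, this is exactly the required padding.

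The main obstacle, and the source of the case analysis, is the asymmetric appearance of $\omega(x)$ in the definition of $\cB_\omega$: the center $x$ need not lie in its own skinny ball when $\omega(x)$ is large, and this boundary phenomenon must be treated separately. Balancing the loss in property~(1) (which caps $R$ at $\Delta/(2\alpha)$) against the factor of $4$ lost when inflating from $B_{\dist_\omega}(x,\cdot)$ to $\cB_\omega(x,R)$ via $\omega(y) \leq 2\dist_\omega(x,y)$ is precisely what produces the constant $8\alpha$.
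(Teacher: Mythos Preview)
Your proof is correct and follows essentially the same approach as the paper: build $\bm{P}$ from the components of $G[V_G\setminus\bm{S}]$ plus singletons, set $R=\Delta/(2\alpha)$, and show $B_{\dist_\omega}(x,R/4)\subseteq\cB_\omega(x,R)\subseteq\bm{P}(x)$ on the event $\cB_\omega(x,R)\cap\bm{S}=\emptyset$. Your path-closure computation is exactly the justification for the paper's bare assertion that $\cB_\omega(v,R)$ is connected, and your case split on $\omega(x)$ is in fact a bit more careful than the paper, which silently drops the boundary case $\omega(v)\geq 2R$ (harmless, since then $B_{\dist_\omega}(v,R/4)=\{v\}$).
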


\begin{proof} The random partition $\bm{P}$ is defined by taking all the connected components of $G[V \setminus \bm{S}]$, along with the single sets $\{\{x\} : x \in \bm{S}\}$. The fact that $\bm{P}$ is almost surely $\Delta$-bounded is immediate.

Set $R = \frac{\Delta}{2\alpha}$ and observe that for every $v \in V_G$, $\cB_{\omega}(v,R) \cap \bm{S} = \emptyset \implies \cB_{\omega}(v,R) \subseteq \bm{P}(x)$, since $\cB_{\omega}(v,R)$ is a connected set in $G$.

Moreover, $B_{\dist_{\omega}}(v,R/4) \subseteq \cB_{\omega}(v,R)$. To see this, observe that \[\max_{x \in B_{\dist_{\omega}}(v,R/4) \setminus \{v\}} \omega(x) \leq \tfrac{R}{2} \,,\] and thus \[ x \in B_{\dist_{\omega}}(v,R/4) \implies \dist_{\omega}(v,x) < R - \tfrac12 \omega(x) \implies x \in \cB_{\omega}(v,R)\,. \] It follows that for every $v \in V_G$, we have \[ \Pr\left[B_{\dist_{\omega}}(v_G, \tfrac{\Delta}{8\alpha}) \subseteq \bm{P}(x)\right] \geq \tfrac12\,, \] completing the proof that $\bm{P}$ is $(8\alpha,\Delta)$-padded. \end{proof}

This following result is proved in \pref{sec:careful} (see \pref{cor:random-sep}).

\begin{theorem}\label{thm:pad} If $\hat G \in \rig(G)$ and $G$ excludes $K_h$ as a minor, then for every $\Delta > 0$, every conformal metric $(G,\omega)$ admits an $(\alpha,\Delta)$-random separator with $\alpha \leq O(h^2)$. \end{theorem}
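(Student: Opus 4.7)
The plan is to adapt the iterative ball-growing scheme of Klein, Plotkin, and Rao to the conformal setting on $\hat G$, using \pref{lem:strict-minor} as the mechanism that converts a failure of the scheme into a forbidden $K_h$-minor of $G$. I build the random separator $\bm S \subseteq V_{\hat G}$ in $h-1$ rounds. At round $i$, for each connected component $C$ of $\hat G[V_{\hat G} \setminus \bm S]$ of $\dist_\omega$-diameter $> \Delta$, I pick an arbitrary center $c_i \in C$, sample a radius $r_i$ uniformly from an interval of length $\Theta(\Delta/h)$, and add to $\bm S$ the ``shell'' of vertices $x \in C$ with $\dist_\omega(c_i,x) - \tfrac12 \omega(x) < r_i \leq \dist_\omega(c_i,x) + \tfrac12 \omega(x)$. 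The skinny-ball interior $\cB_\omega(c_i,r_i) \cap C$ is kept for recursion in later rounds, and the rest of $C$ is also released. Because the shell is removed, the two pieces it separates end up in distinct components of $\hat G[V_{\hat G} \setminus \bm S]$.

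The padding bound comes from a direct union bound. For a fixed vertex $v$ and radius $R \geq 0$, the skinny ball $\cB_\omega(v,R)$ is sliced in a single round only if the uniform random radius $r_i$ lands in an interval of length at most $2R$ within its sampling interval of length $\Theta(\Delta/h)$; this occurs with probability $O(hR/\Delta)$. Summing over $h-1$ rounds gives $\Pr[\cB_\omega(v,R) \cap \bm S \neq \emptyset] \leq O(h^2)\, R/\Delta$, which is exactly $\alpha = O(h^2)$.

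The main obstacle, and the only place where minor-freeness of $G$ is used, is proving termination: that no component of $\dist_\omega$-diameter $> \Delta$ survives all $h-1$ rounds. Suppose for contradiction such a component $C_{h-1}$ remains. Then there are centers $c_1,\ldots,c_{h-1}$ in successively nested skinny balls and a further vertex $c_h \in C_{h-1}$ at $\dist_\omega$-distance $> \Delta$ from $c_{h-1}$. From this nested data I will construct $\dot K_h$ as a \emph{strict} minor of $\hat G$: branch sets $A_1,\ldots,A_h$ for the original $K_h$-vertices, chosen to be connected subsets around each $c_i$ lying inside the $i$-th ball but strictly inside the next shell, together with branch sets $P_{ij}$ for the subdivision vertex on edge $\{i,j\}$, chosen as a connected subpath of the shell removed at round $\max(i,j)$ that sits in the annular region joining $c_i$ to $c_j$. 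The non-adjacency constraint of $\dot K_h$ --- that no edge of $\hat G$ runs directly between $A_i$ and $A_j$ --- is enforced by exactly the fact that the removed shells separate the nested cores from one another, so any edge between $A_i$ and $A_j$ in $\hat G$ must pass through the shell that supplies $P_{ij}$. Invoking \pref{lem:strict-minor} then yields $K_h$ as a minor of $G$, contradicting the hypothesis and hence forcing termination.

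The delicate combinatorics is in simultaneously producing the $\binom{h}{2}$ disjoint subdivision branches $P_{ij}$ while keeping each core $A_i$ connected. I expect this to follow from routine bookkeeping once the centers are chosen deep enough inside the preceding skinny balls: by picking each $c_{i+1}$ to lie well inside $\cB_\omega(c_i,r_i)$, the annulus between successive shells has $\dist_\omega$-thickness $\Theta(\Delta/h)$, which is room enough to route the $i$ required subdivision paths involving $c_{i+1}$ through disjoint shortest paths within the shell removed at round $i+1$. With the $\dot K_h$-strict-minor in hand, the argument closes, and \pref{lem:padsep} converts the random separator into the padded partition promised by the statement of \pref{thm:pad}.
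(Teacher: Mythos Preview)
Your outline matches the paper's strategy at the coarsest level (iterated chopping, failure yields a strict $\dot K_h$ minor, invoke \pref{lem:strict-minor}), but two of the steps you flag as routine are in fact where the real content lies, and as written your algorithm does not terminate.

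\textbf{Arbitrary centers do not work.} Take $\hat G$ to be a long path with unit weights; it is planar, so $h=5$ suffices. If at each round you pick the leftmost surviving vertex as center and remove a single shell at radius $r_i \in [0,\Theta(\Delta/h)]$, then after $h-1$ rounds you have only nibbled $O(\Delta)$ off the left end, and a component of diameter $\gg \Delta$ survives. Your contradiction step now asks you to build a strict $\dot K_5$ minor from the centers $c_1,\dots,c_4$ together with a far-away $c_5$---but $c_1,\dots,c_4$ are all within $O(\Delta)$ of each other, so no such minor exists (and indeed none can, since paths are $K_3$-minor-free). The paper avoids this by choosing each new center to \emph{maximize} $\dist_\omega^G$ to all previous centers (see \eqref{eq:argmax}); then ``termination fails'' means precisely that the depth-$(h-1)$ node is $21h\Delta$-spaced, so \emph{all} pairs of centers are $\geq 21h\Delta$ apart in the ambient metric. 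This pairwise separation is what drives the minor construction. A single condition ``$\dist_\omega(c_h,c_{h-1}) > \Delta$'' is not enough.

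\textbf{The careful $K_h$ minor is not routine bookkeeping.} Your sketch places $P_{ij}$ inside the shell removed at round $\max(i,j)$, but those shells are exactly the vertices you deleted; nothing prevents two such shells (from different rounds, in different induced subgraphs) from being adjacent in $\hat G$, so the independence of the subdivision vertices is not guaranteed. More seriously, you must enforce $E_{\hat G}(A_i,A_j)=\emptyset$ for all $i\neq j$ and $E_{\hat G}(w_{ij},A_k)=\emptyset$ for $k\notin\{i,j\}$, simultaneously for $\binom{h}{2}$ pairs. The paper's proof of \pref{lem:spaced} does this by an induction maintaining six invariants (P1)--(P6): at step $t$ one has supernodes $A^t_1,\dots,A^t_t$ with designated representatives $r^t_i$ that stay within $10t\Delta$ of $c_i$ in $\dist_\omega^G$; when the $(t{+}1)$st center $c_{t+1}$ is revealed, each $A^t_i$ is extended by a $\dist_\omega^{H_{t+1}}$-shortest path $\gamma_i$ from $r^t_i$ toward $c_{t+1}$, and the subdivision vertex $w_{i,t+1}$ is the single point where $\gamma_i$ crosses the sphere $\vvS_\omega^{H_{t+1}}(c_{t+1},r-8\Delta)$. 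Every non-adjacency condition is then checked by explicit distance arithmetic that uses the $21h\Delta$ separation of centers; this is the bulk of \pref{sec:warm}. Your annular picture (cores nested in balls, subdivision branches in shells) does not supply these estimates.

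Finally, the paper does not bound the diameter directly after $h-1$ chops; it uses the failure of $21h\Delta$-spacing to conclude that every surviving vertex is within $21h\Delta$ of some earlier center, and then applies a separate \emph{shatter} step around those centers to cap the diameter. Your single-shell-per-round scheme has no analogue of this.
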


We can now prove \pref{thm:embeddings}.

\begin{proof}[Proof of \pref{thm:embeddings}] Suppose that $\hat G \in \rig(G)$ and $G$ excludes a $K_h$ minor. Let $(\hat G,\omega)$ be a conformal metric such that $\spread(V_{\hat G},\dist_{\omega})=\cspread_1(\hat G)$. Combining \pref{thm:pad} and \pref{lem:padsep} shows that $(V_{\hat G}, \dist_{\hat G})$ admits an $(\alpha,\cspread_1(\hat G)/4)$-random separator for some $\alpha \leq O(h^2)$.  Now \pref{lem:rab} shows that $\csobs(\hat G) \geq \sobs(V_{\hat G},\dist_{\omega}) \gtrsim \frac{1}{h^2} \spread(V_{\hat G}, \dist_{\omega}) = \frac{1}{h^2} \cspread_1(\hat G)$, completing the proof. \end{proof}

\begin{remark} One advantage to introducing the auxiliary random separator $\bm{S}$ is that it can be used to directly relate $\phi_G$ and $\cspread(G)$ without going through padded partitions. Indeed, this can be done using the weaker property that for every $v \in V_G$, \[ \Pr[v \in \bm{S}] \leq \alpha \frac{\omega(v)}{\Delta}\,. \] (The stronger padding property has a number of additional applications; see \pref{sec:applications} and \pref{sec:addl}.)

We present the argument. Suppose that $(G,\omega)$ is a conformal graph with $\|\omega\|_{L^1(V_G)} \leq 1$ and let $\mathfrak{s} \seteq \spread(V_G,\dist_{\omega}).$ If there is some vertex $v_0 \in V_G$ for which $|B_{\dist_{\omega}}(v_0, \mathfrak{s}/4)| \geq \tfrac12 |V_G|$, then one can apply \pref{lem:rab1} and \pref{thm:fhl} to obtain \[ \phi_G \leq \frac{2}{\csobs(G)} \leq \frac{2}{\sobs(V_G,\dist_{\omega})} \leq \frac{8}{\mathfrak{s}}\,.\]

Suppose that no such $v_0$ exists.  In that case, every subset $U \subseteq V_G$ with $\diam_{\omega}(U) \leq \mathfrak{s}/4$ has $|U| < |V_G|/2$. Let $\bm{S}$ be an $(\alpha, \mathfrak{s}/4)$-random separator.  Then every connected component of $G[V \setminus \bm{S}]$ has at most $|V_G|/2$ vertices. (In particular, $\bm{S}$ is a $\frac23$-balanced separator with probability 1.) Therefore by linearity of expectation, \[ \phi_G \leq \frac{\E\left[|\bm{S}|\right]}{|V_G|/2} \leq \frac{8\alpha\sum_{v \in V_G} \omega(v)}{|V_G| \mathfrak{s}} = \alpha \frac{8 \|\omega\|_{L^1(V_G)}}{\mathfrak{s}} \leq \frac{8\alpha}{\mathfrak{s}}\,. \] \end{remark}

\section{Multi-flows, congestion, and crossings} \label{sec:flows}

Let $G$ be an undirected graph, and let $\cP_G$ denote the the set of all paths in $G$. Note that we allow length-$0$ paths consisting of a single vertex. For vertices $u,v \in V_G$, we use $\cP_G^{uv} \subseteq \cP_G$ for the subcollection of $u$-$v$ paths. A {\em multi-flow in $G$} is a map $\Lambda : \cP_G \to \R_+$.
We will use the terms ``flow'' and ``multi-flow'' interchangeably.

 Define the {\em congestion map} $c_{\Lambda} : V_G \to \R_+$ by \[ c_{\Lambda}(v) = \sum_{\gamma \in \cP_G : v \in \gamma} \Lambda(\gamma)\,. \] For $u,v \in V_G$, we denote the total flow sent between $u$ and $v$ by
\[
\Lambda[u,v] = \sum_{\gamma \in \cP_G^{uv}} \Lambda(\gamma)\,.
\]
For an undirected graph $H$, an {\em $H$-flow in $G$} is a pair $(\Lambda, \f)$ that satisfies the following conditions: \begin{enumerate} \item  $\Lambda$ is a flow in $G$ \item $\f : V_H \to V_G$ \item For every $u,v \in V_G$, \[ \Lambda[u,v] = \#\left\{\{x,y\} \in E_H : \{\f(x),\f(y)\}=\{u,v\} \right\}\,.
\] \end{enumerate} If the map $\f$ is injective, we say that $(\Lambda,\f)$ is {\em proper.} Say that $(\Lambda,\f)$ is {\em integral} if $\{ \Lambda(\gamma) : \gamma \in \cP_G \} \subseteq \Z_+$.

\subsection{Duality between conformal metrics and multi-flows}

For $p \in [1,\infty]$, define the {\em $\ell_p$-vertex congestion of $G$} by \[\vcong_{p}(G) = \min_{\Lambda} \left\{ \|c_{\Lambda}\|_{\ell_p(V_G)} : \Lambda[u,v] = 1 \ \ \forall \{u,v\} \in {V_G \choose 2} \right\}\,, \] where the minimum is over all flows in $G$.

The next theorem follows from the strong duality of convex optimization; see \cite[Thm. 2.2]{BLR08} which employs Slater's condition for strong duality (see, e.g., \cite[Ch. 5]{BV04}).

\begin{theorem}[Duality theorem]\label{thm:duality} For every $G$, it holds that if $(p,q)$ is a pair of dual exponents, then \[ \vcong_p(G) = |V_G|^{2-\frac{1}{q}} \cdot \cspread_q(G)\,. \] \end{theorem}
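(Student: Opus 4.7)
The plan is to derive the identity from LP/convex duality; the statement is essentially \cite[Thm.~2.2]{BLR08}, which is invoked in the paper using Slater's condition for strong duality.

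The first step is variational reformulation. By H\"older duality,
\[ \|c_\Lambda\|_{\ell_p(V_G)} = \max\Set{\tsum_{v\in V_G}\omega(v)\,c_\Lambda(v) : \omega\geq 0,\ \|\omega\|_{\ell_q(V_G)}\leq 1}. \]
Expanding $c_\Lambda(v) = \sum_{\gamma\ni v}\Lambda(\gamma)$ and swapping sums yields
\[ \vcong_p(G) = \min_\Lambda\max_\omega\sum_{\gamma\in\cP_G}\Lambda(\gamma)\tsum_{v\in\gamma}\omega(v), \]
with $\Lambda$ ranging over non-negative multi-flows satisfying $\Lambda[u,v]=1$ on every unordered pair and $\omega$ over the non-negative unit $\ell_q$-ball.

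The second step is to swap $\min_\Lambda$ and $\max_\omega$ and to evaluate the inner minimization. The objective is bilinear, both feasible sets are closed convex, and strictly positive $\omega$ is interior to the $\ell_q$-ball, so Slater's condition applies and one may swap to $\max_\omega\min_\Lambda$. For fixed $\omega$, the inner minimization decouples across unordered pairs: unit demand between $u$ and $v$ is routed along the $u$-$v$ path minimizing $\sum_{w\in\gamma}\omega(w)$. After absorbing the endpoint half-weight convention into the conformal metric $\dist_\omega$, the optimum is (a positive constant multiple of) $\sum_{\{u,v\}}\dist_\omega(u,v)$.

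The last step is to match normalizations. The relation $\|\omega\|_{L^q(V_G)} = |V_G|^{-1/q}\|\omega\|_{\ell_q(V_G)}$ shows that rescaling the constraint from $\ell_q$ to $L^q$ contributes $|V_G|^{1/q}$, while the prefactor $|V_G|^{-2}$ in the definition of $\cspread_q(G)$ contributes $|V_G|^2$ upon forming the double sum. Their product yields the claimed prefactor $|V_G|^{2-1/q}$. The main obstacle is careful bookkeeping, ensuring that the endpoint half-weight conventions and the ordered/unordered pair symmetrization line up so that the prefactor is exactly $|V_G|^{2-1/q}$; the convex-analytic core (strong duality via Slater, or equivalently Sion's minimax theorem) is routine and handled by citation.
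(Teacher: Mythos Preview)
Your approach—H\"older dualize $\|c_\Lambda\|_{\ell_p}$, swap $\min$ and $\max$ via Slater/Sion, then route each unit demand along a shortest $\omega$-weighted path—is exactly the route the paper indicates; the paper does not give its own proof but simply cites \cite[Thm.~2.2]{BLR08} and Slater's condition. So at the level of method there is nothing to compare.

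There is, however, a real gap in the step you flag as ``the main obstacle'' and then wave away. After the swap, the inner minimum over $u$-$v$ paths is
\[
\min_{\gamma \in \cP_G^{uv}} \sum_{w\in\gamma}\omega(w) \;=\; \dist_\omega(u,v) + \frac{\omega(u)+\omega(v)}{2},
\]
because the vertex-sum along a path exceeds its conformal length by exactly half the endpoint weights. This is an \emph{additive} correction, not a multiplicative one; your assertion that the optimum is ``a positive constant multiple of $\sum_{\{u,v\}}\dist_\omega(u,v)$'' is false. In fact, with the paper's half-weight convention for $\dist_\omega$, the stated identity does not hold exactly: on the path $P_3$ with three vertices one has $\vcong_\infty(P_3)=3$ while $|V_{P_3}|^{2-1}\cspread_1(P_3)=4$. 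What your argument does establish (using $\dist_\omega(u,v)\geq \tfrac{\omega(u)+\omega(v)}{2}$ for $u\neq v$) is the two-sided estimate
\[
\tfrac12\,|V_G|^{2-1/q}\,\cspread_q(G)\;\leq\;\vcong_p(G)\;\leq\;|V_G|^{2-1/q}\,\cspread_q(G)\,.
\]
This suffices for every use the paper makes of the theorem (each invocation is inside a $\gtrsim$), so the slip is harmless downstream; but the exact equality as written would require replacing $\dist_\omega$ by the full vertex-weight distance $\dist_\omega(u,v)+\tfrac{\omega(u)+\omega(v)}{2}$ in the definition of $\cspread_q$, or making the matching adjustment on the congestion side. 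You should say this explicitly rather than claim the constants line up.
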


We will only require the case $p=\infty, q=1$, except in \pref{sec:eigenvalues} where the $p=q=2$ case is central.

\subsection{Crossing congestion and excluded minors}

Now we define the {\em crossing congestion} of a flow: If $(\Lambda,\f)$ is an $H$-flow in $G$, denote \[ \cross_G(\Lambda,\f) = \sum_{\substack{\{u,v\},\{u',v'\} \in E_H \\ |\{u,v,u',v'\}|=4}}\ \sum_{\gamma \in \cP_G^{\f(u) \f(v)}} \sum_{\gamma' \in \cP_G^{\f(u') \f(v')}}
 \Lambda(\gamma) \Lambda(\gamma')\1_{\{\gamma \cap \gamma' \neq \emptyset\}}\,.
\]
It provides a lower bound on the $\ell_2$ congestion of $\Lambda$, as clearly
\[
\sum_{v \in V_G} c_{\Lambda}(v)^2 = \sum_{v \in V_G}\ \sum_{\substack{\gamma, \gamma'\in \cP_G : \\ v \in \gamma \cap \gamma'}} \Lambda(\gamma) \Lambda(\gamma')
\geq \sum_{\gamma,\gamma' \in \cP_G} \Lambda(\gamma) \Lambda(\gamma')\1_{\{\gamma \cap \gamma' \neq \emptyset\}}
\geq \chi_G(\Lambda,\f)\,.
\]

Define \[ \cross_G^*(H) = \inf_{(\Lambda,\f)} \cross_G(\Lambda,\f)\,, \] where the infimum is over all $H$-flows in $G$. Define also \[ \cross_G^{\dag}(H) = \min_{(\Lambda,\f)} \cross_G(\Lambda,\f)\,, \] where the infimum is over all {\em integral} $H$-flows in $G$.  The next lemma offers a nice property of crossing congestion: The infimum is always achieved by integral flows.

\begin{lemma} For every graph $H$, it holds that \[ \cross^*_G(H) = \cross^{\dag}_G(H)\,. \] \end{lemma}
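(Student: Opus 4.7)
The direction $\cross^*_G(H) \leq \cross^\dagger_G(H)$ is immediate since every integral flow is a flow. For the reverse, my plan is randomized rounding: given an arbitrary fractional $H$-flow $(\Lambda,\f)$, I will construct a random \emph{integral} $H$-flow $(\Lambda^\dagger,\f)$ (keeping $\f$ fixed) whose \emph{expected} crossing congestion equals $\cross_G(\Lambda,\f)$. The probabilistic method will then produce a realization with $\cross_G(\Lambda^\dagger,\f) \leq \cross_G(\Lambda,\f)$, so taking the infimum over fractional flows yields $\cross_G^\dagger(H) \leq \cross_G^*(H)$.

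The rounding is defined as follows. For each pair $\{u,v\} \subseteq V_G$ with $n_{uv} \seteq \Lambda[u,v] > 0$, observe that $n_{uv}$ is a positive integer (it counts the edges of $H$ that $\f$ sends to $\{u,v\}$), so $p_{uv}(\gamma) \seteq \Lambda(\gamma)/n_{uv}$ is a probability measure on $\cP_G^{uv}$. Independently for every edge $e = \{x,y\} \in E_H$, I will draw a path $\gamma_e \sim p_{\f(x)\f(y)}$ and set $\Lambda^\dagger(\gamma) \seteq \#\{e \in E_H : \gamma_e = \gamma\}$. This is manifestly an integral $H$-flow still realizing the same $\f$.

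The main calculation is to evaluate $\E[\cross_G(\Lambda^\dagger,\f)]$ one edge-pair at a time. Fix $e_1 = \{u,v\}, e_2 = \{u',v'\} \in E_H$ with $|\{u,v,u',v'\}|=4$, and split on whether the image pairs $\{\f(u),\f(v)\}$ and $\{\f(u'),\f(v')\}$ share a vertex in $V_G$. If they do, then every $\gamma \in \cP_G^{\f(u)\f(v)}$ intersects every $\gamma' \in \cP_G^{\f(u')\f(v')}$ at that shared vertex, so the $(e_1,e_2)$-contribution is deterministically $n_{\{\f(u),\f(v)\}}\, n_{\{\f(u'),\f(v')\}}$ in both $\cross_G(\Lambda,\f)$ and $\cross_G(\Lambda^\dagger,\f)$. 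Otherwise the image pairs are disjoint; then any two $H$-edges $f_1,f_2$ with $\f(f_i)$ equal to the respective images are forced to be distinct, so the drawn paths $\gamma_{f_1}$ and $\gamma_{f_2}$ are independent, and expanding $\Lambda^\dagger(\gamma) = \sum_f \1_{\{\gamma_f = \gamma\}}$ gives that the expected $(e_1,e_2)$-contribution equals
\[
n_{\f(e_1)}\, n_{\f(e_2)} \sum_{\gamma,\gamma'} p_{\f(e_1)}(\gamma)\, p_{\f(e_2)}(\gamma')\, \1_{\{\gamma \cap \gamma' \neq \emptyset\}} = \sum_{\gamma,\gamma'} \Lambda(\gamma)\,\Lambda(\gamma')\, \1_{\{\gamma \cap \gamma' \neq \emptyset\}},
\]
which is exactly the matching term in $\cross_G(\Lambda,\f)$.

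The main subtlety is the disjoint-images case: I must rule out a diagonal $f_1 = f_2$ contribution when expanding $\Lambda^\dagger(\gamma)\Lambda^\dagger(\gamma')$, because for $f_1 = f_2$ the paths $\gamma_{f_1}$ and $\gamma_{f_2}$ coincide and the expectation does not factor. Disjointness of the images in $V_G$ is precisely what forbids this, since then distinct image-pairs force distinct $H$-edges. Summing the pairwise identities over all qualifying $(e_1,e_2)$ yields $\E[\cross_G(\Lambda^\dagger,\f)] = \cross_G(\Lambda,\f)$, and the probabilistic method completes the proof.
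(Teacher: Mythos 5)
Your proposal is correct and is essentially the paper's proof: the same randomized rounding in which each edge of $H$ independently samples a path with probability proportional to its fractional weight, followed by linearity of expectation and the probabilistic method. The only difference is that you carefully verify the expectation computation (splitting on whether the image pairs share a vertex, which correctly disposes of the diagonal terms), a step the paper compresses into one sentence.
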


\begin{proof} Given any $H$-flow $(\Lambda,\f)$, define a random integral flow $\bm{\Lambda}^{\dag}$ as follows: For every edge $\{u,v\} \in E_H$, independently choose a path $\gamma \in \cP_G^{\f(u) \f(v)}$ with probability $\frac{\Lambda(\gamma)}{\Lambda[\f(u),\f(v)]}$ and let $\bm{\Lambda}^{\dag}(\gamma)$ be equal to the number of edges of $E_H$ that choose the path $\gamma$. (For all paths $\gamma$ not selected in such a manner, $\bm{\Lambda}^{\dag}(\gamma)=0$.) Independence and linearity of expectation yield $\E[\cross_G(\bm{\Lambda}^{\dag},\f)]=\cross_G(\Lambda,\f)$. \end{proof}

The next result relates the topology of a graph to crossing congestion; it appears as \cite[Lem 3.2]{BLR08}.

\begin{lemma} If $H$ is a bipartite graph with minimum degree 2 and $(\Lambda,\f)$ is an $H$-flow in $G$ with $\cross_G(\Lambda,\f)=0$, then $G$ has an $H$-minor. \end{lemma}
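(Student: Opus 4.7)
The plan is to construct an $H$-minor of $G$ directly, by defining pairwise-disjoint connected branch sets $\{B_x \subseteq V_G : x \in V_H\}$ with an edge of $G$ between $B_x$ and $B_y$ whenever $\{x,y\} \in E_H$. I first invoke the preceding lemma to assume $\Lambda$ is integral, so that each edge $e = \{u,v\} \in E_H$ is carried by a single path $P_e$ from $\f(u)$ to $\f(v)$, and $\cross_G(\Lambda,\f) = 0$ becomes the statement that $P_e \cap P_{e'} = \emptyset$ whenever $|e \cup e'|=4$. As a preliminary step I would deduce from $\cross_G(\Lambda,\f)=0$ together with the hypotheses on $H$ that $\f$ is injective: if $\f(x)=\f(y)$ for $x \neq y$, then the minimum-degree-$2$ and bipartite conditions furnish edges $e_1 \ni x$, $e_2 \ni y$ with $|e_1 \cup e_2| = 4$, whose paths both contain $\f(x) = \f(y)$, contradicting $\cross = 0$.

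The structural engine is bipartiteness. If $v \in V_G$ lies on two or more distinct paths $P_{e_1}, \ldots, P_{e_r}$, then the $e_i$ pairwise intersect in $H$; since $H$ is triangle-free, three such edges without a common vertex would form a triangle, so the collection $C(v) = \{e \in E_H : v \in P_e\}$ forms a star in $H$ with a well-defined center $c(v) \in V_H$ whenever $|C(v)| \geq 2$. In particular, if $v$ lies on $P_e$ with $e=\{x,y\}$ together with any other path, then $c(v) \in \{x,y\}$. I then select an integral realization minimizing $\sum_{e} |P_e|$ and, by a rerouting argument, establish a compatibility property: whenever $P_e, P_{e'}$ share a vertex $v$ and $e \cap e' = \{z\}$, their sub-paths from $\f(z)$ to $v$ can be taken to coincide. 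Compatibility together with the bipartite-star structure yields a monotonicity claim: along each $P_e = \f(x) \to \cdots \to \f(y)$, every multi-path vertex with $c(\cdot) = x$ strictly precedes every multi-path vertex with $c(\cdot) = y$, with single-path stretches interspersed in between.

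Granted monotonicity, I build the branch sets as follows: place $\f(x) \in B_x$, assign each multi-path $v$ to $B_{c(v)}$, and assign each maximal run of single-path vertices inside $P_e = \f(x) \to \cdots \to \f(y)$ to $B_x$ or $B_y$ according to which multi-path block bounds it (the unique transition run, if any, being split at its midpoint). By monotonicity, $B_x \cap P_e$ is a contiguous prefix of $P_e$ for every $e \ni x$, so $B_x$ is non-empty (it contains $\f(x)$), connected (every vertex is joined to $\f(x)$ via its own prefix), and the $B_x$'s are pairwise disjoint by construction. For each $\{x,y\} \in E_H$, the path $P_{\{x,y\}}$ crosses between $B_x$ and $B_y$ along a single edge of $G$, providing the required adjacency. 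Hence $\{B_x\}$ witnesses $H$ as a minor of $G$.

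The main obstacle I anticipate is the monotonicity claim, and specifically the compatibility step on which it rests. The naive rerouting argument must be checked not to inflate crossing congestion, which takes careful combinatorial analysis exploiting bipartiteness: any crossing introduced by swapping a shorter prefix would require a third edge $e^*$ sharing a vertex with the original $e$ but disjoint from $e'$, and such configurations are constrained by the no-triangle property. Once compatibility is established, the proof of monotonicity is a case analysis: a $y$-labelled multi-path vertex on $P_e$, combined with compatibility, forces every later vertex of $P_e$ lying on another path to also have $c(\cdot)=y$, and bipartiteness blocks a third incident edge from pulling the center back to $x$. With monotonicity in hand, the remaining construction and verification are essentially bookkeeping.
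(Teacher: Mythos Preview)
The paper does not prove this lemma; it merely cites it as \cite[Lem.~3.2]{BLR08}. So there is no in-paper proof to compare against directly, and I will assess your proposal on its own merits.

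Your reduction to an integral flow, the injectivity of $\f$, and the bipartite star structure (every $v$ lying on two or more paths has a well-defined center $c(v)\in V_H$) are all correct and form the right skeleton. The branch-set construction you describe at the end is also fine \emph{given} monotonicity. The issue is the step on which monotonicity rests.

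Your rerouting argument for ``compatibility'' has the roles reversed. Suppose $e=\{z,a\}$, $e'=\{z,b\}$, and you replace the $\f(z)$-to-$v$ prefix of $P_e$ by the corresponding prefix of $P_{e'}$. A new crossing with some $P_{e^*}$ requires $e\cap e^*=\emptyset$ together with $P_{e^*}$ meeting the borrowed $P_{e'}$-prefix; the latter forces $e^*\cap e'\neq\emptyset$. So the dangerous third edge shares a vertex with $e'$, not with $e$ as you wrote: concretely $e^*=\{b,c\}$ with $c\notin\{z,a\}$. The three edges $e,e',e^*$ then form a path $a\,$--$\,z\,$--$\,b\,$--$\,c$ in $H$, not a triangle, and bipartiteness places no obstruction. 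Hence the length-minimizing realization need not be stable under this reroute, and the minimality contradiction does not fire.

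There is a useful observation hiding here, though. At the offending intersection $w\in P_{e'}\cap P_{e^*}$ one has $c(w)=b$, while at $v$ one has $c(v)=z$; and $w$ precedes $v$ on $P_{e'}$ (it lies on the borrowed prefix). Thus the very obstruction to your reroute is itself a monotonicity failure on $P_{e'}$. This suggests that the right potential is not total path length but some direct count of non-monotone inversions (or an inductive argument that eliminates them one edge at a time), rather than the length-based reroute you sketched. As written, the compatibility step is a genuine gap.
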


The preceding lemma allows one to use standard crossing number machinery to arrive at the following result (see \cite[Thm 3.9--3.10]{BLR08}).

\begin{theorem} \label{thm:blr} For every $h \geq 2$, the following holds: If $G$ excludes $K_h$ as a minor, then for any $N \geq 4h$, \[ \cross^*_G(K_{N}) \gtrsim \frac{N^4}{h^3}\,. \] Moreover, there is a constant $K > 0$ such that if $N \geq  K h \sqrt{\smash[b]\log\,h}$, then \[ \cross^*_G(K_{N}) \gtrsim \frac{N^4}{h^2 \log h}\,. \] \end{theorem}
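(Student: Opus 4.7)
The plan is to carry out the classical probabilistic proof of the crossing lemma, with the bipartite zero-crossing structure lemma quoted just above serving as the base case. By the integrality lemma already established, it suffices to consider integral $K_N$-flows throughout.

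\emph{Sampling step.} First I would show that for any integral $K_N$-flow $(\Lambda, \f)$ with $c = \cross_G(\Lambda, \f)$ crossings and any $k \leq N$, restricting to a uniformly random $k$-subset $S \subseteq [N]$ yields a $K_k$-flow in $G$ whose expected number of surviving crossings is at most $\binom{N-4}{k-4}/\binom{N}{k} \cdot c \leq (k/N)^4 c$, since each crossing involves four distinct vertices of $[N]$. Some outcome realizes the expectation, giving $\cross^*_G(K_N) \geq (N/k)^4 \cross^*_G(K_k)$. An identical argument in the bipartite setting gives $\cross^*_G(K_{a,a}) \geq (a/b)^4 \cross^*_G(K_{b,b})$, and $\cross^*_G(K_N) \geq \cross^*_G(K_{N/2, N/2})$ by restricting an optimal $K_N$-flow to a balanced bipartite subgraph, which can only drop crossings.

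\emph{Base case via the bipartite $2$-core.} For any integral $K_{a, a}$-flow with $c$ crossings, delete one $K_{a,a}$-edge per crossing to obtain a bipartite subgraph with at least $a^2 - c$ edges supporting a zero-crossing sub-flow. Iteratively removing vertices of degree less than $2$ yields the $2$-core $H^*$: bipartite, of minimum degree $\geq 2$, with $|V(H^*)| \leq 2a$ and $|E(H^*)| \geq a^2 - c - 2a$. The bipartite zero-crossing structure lemma now gives $H^*$ as a minor of $G$, so $H^*$ is $K_h$-minor-free. Since $K_{h-1, h-1}$ itself has a $K_h$-minor (contract any $h-2$ disjoint bipartite edges), $H^*$ cannot contain $K_{h-1, h-1}$ as a subgraph.

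\emph{Two density regimes.} For the first bound, take $a = 2h$: K\H{o}v\'ari--S\'os--Tur\'an gives $|E(H^*)| \leq \mathrm{ex}(2h, 2h;\, K_{h-1, h-1}) \leq 2h^2 + O(h)$, and comparing with $|E(H^*)| \geq 4h^2 - c - 4h$ forces $c \gtrsim h^2$, so $\cross^*_G(K_{2h, 2h}) \gtrsim h^2$; combining with sampling yields
\[
\cross^*_G(K_N) \;\geq\; \cross^*_G(K_{N/2, N/2}) \;\geq\; (N/4h)^4 \cdot h^2 \;\gtrsim\; N^4/h^2 \;\geq\; N^4/h^3.
\]
For the moreover statement, take $a = K h \sqrt{\smash[b]{\log h}}$ with $K$ a sufficiently large absolute constant, and replace K\H{o}v\'ari--S\'os--Tur\'an by the Kostochka--Thomason density bound $|E(H^*)| \leq \beta \cdot 2a \cdot h \sqrt{\smash[b]{\log h}}$ for $K_h$-minor-free graphs. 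Choosing $K \geq 4\beta$ forces $c \geq a^2/4$, so $\cross^*_G(K_{a,a}) \gtrsim a^2$; sampling and bipartite restriction then give $\cross^*_G(K_N) \geq \cross^*_G(K_{N/2, N/2}) \geq (N/2a)^4 \cdot a^2 \gtrsim N^4/(h^2 \log h)$ whenever $N \geq 2a$.

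The one real subtlety is preserving the minimum-degree-$2$ hypothesis required by the bipartite lemma; the $2$-core reduction handles this while losing only $O(a)$ edges, a lower-order correction in both regimes.
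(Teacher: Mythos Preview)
The paper does not give its own proof of this theorem; it is quoted from \cite{BLR08} with the remark that it follows from ``standard crossing number machinery.''  Your overall scheme---probabilistic amplification plus the bipartite zero-crossing lemma as a base case---is precisely that machinery, and your argument for the \emph{moreover} bound (seeding the base case with the Kostochka--Thomason density $O(nh\sqrt{\smash[b]{\log h}})$ at scale $a \asymp h\sqrt{\smash[b]{\log h}}$) is correct.

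The first regime, however, has a genuine gap.  Your claimed K\H{o}v\'ari--S\'os--Tur\'an estimate $\mathrm{ex}(2h,2h;K_{h-1,h-1}) \le 2h^2 + O(h)$ is false.  When the forbidden complete bipartite graph has parts comparable to the host ($s=t=h-1$ inside $K_{2h,2h}$), the Zarankiewicz bound $e \le (t-1)^{1/s} n^{2-1/s}$ degenerates to the trivial $(1+o(1))\cdot 4h^2$; the factor $\tfrac12$ you seem to be using belongs only to the \emph{non-bipartite} Tur\'an number $\mathrm{ex}(n;K_{s,t})$, not to the bipartite Zarankiewicz problem.  Concretely, the circulant bipartite graph on $\Z_{2h} \times \Z_{2h}$ with $a_i \sim b_j \iff j-i \in \{0,\dots,2h-5\} \pmod{2h}$ has $4h^2 - 8h$ edges and contains no $K_{h-1,h-1}$.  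So from $a = 2h$ one cannot extract $c \gtrsim h^2$, and indeed your conclusion $\cross^*_G(K_N) \gtrsim N^4/h^2$ should already have been a warning: it is \emph{stronger} than the second bound $N^4/(h^2 \log h)$, yet the $\log h$ loss there is known to be necessary because Kostochka--Thomason is tight.

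To recover $N^4/h^3$ down to $N \asymp h$ one needs a different base-case input (not K\H{o}v\'ari--S\'os--Tur\'an, which is vacuous at this scale).  The argument in \cite{BLR08} treats this range with a separate density estimate; the amplification step alone, seeded only by Kostochka--Thomason, does not reach below $N \asymp h\sqrt{\smash[b]{\log h}}$.
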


\subsection{Vertex congestion in rigs} \label{sec:vcon-rigs}

We now generalize Matou{\v{s}}ek's argument to prove the following theorem.

\begin{theorem}\label{thm:mat} For any graph $G$ and $\hat G \in \rig(G)$, \[ \cross_{G}^*(K_{|V_{\hat G}|}) \leq (4 |E_{\hat G}|+|V_{\hat G}|)\, \vcong_{\infty}(\hat G)^2\,. \] \end{theorem}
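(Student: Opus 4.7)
The plan is to adapt Matou{\v{s}}ek's flow transformation: given an $\hat G$-flow $\Lambda$ achieving $\vcong_\infty(\hat G) = C$ (so $\Lambda$ routes a unit of flow between every pair of vertices of $\hat G$ with $\ell_\infty$-vertex congestion $C$), I will build a $K_N$-flow $(\Lambda',\varphi)$ in $G$, where $N = |V_{\hat G}|$, whose crossing congestion is at most $(4|E_{\hat G}| + |V_{\hat G}|)\,C^2$. The first step is to put the rig representation in a convenient form. By the argument used in the proof of \pref{lem:planar-rigs}, each region $R_u$ may be taken to be the vertex set of a finite tree $T_u \subseteq G[R_u]$. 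For each $u \in V_{\hat G}$ fix a distinguished vertex $x_u \in R_u$ and for each edge $\{u,v\} \in E_{\hat G}$ a meeting vertex $y_{uv} \in R_u \cap R_v$; then replace $T_u$ by a Steiner tree in $G[R_u]$ spanning $Y_u := \{x_u\} \cup \{y_{uv} : \{u,v\} \in E_{\hat G}\}$, so every leaf of $T_u$ lies in $Y_u$. Contracting interior degree-$2$ vertices produces a tree $\tilde T_u$ with at most $2|Y_u|-1 = 2\deg_{\hat G}(u)+1$ vertices, and the \emph{branching set} $V^\bullet := \bigcup_u V_{\tilde T_u}$ therefore satisfies
\[
|V^\bullet| \leq \sum_{u \in V_{\hat G}} \bigl(2\deg_{\hat G}(u)+1\bigr) = 4|E_{\hat G}| + |V_{\hat G}|\mper
\]

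Next, define the $K_N$-flow by $\varphi(u) = x_u$, and, for each $\hat G$-path $\gamma = (u_0,u_1,\ldots,u_k)$, replace $\gamma$ by the $G$-path $\gamma'$ obtained by concatenating the unique tree path in $T_{u_0}$ from $x_{u_0}$ to $y_{u_0 u_1}$, the tree paths in $T_{u_i}$ from $y_{u_{i-1} u_i}$ to $y_{u_i u_{i+1}}$ for $1 \leq i < k$, and the tree path in $T_{u_k}$ from $y_{u_{k-1} u_k}$ to $x_{u_k}$; set $\Lambda'(\gamma')$ to be the sum of $\Lambda(\gamma)$ over all $\hat G$-paths $\gamma$ collapsing to $\gamma'$. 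This is a valid $K_N$-flow routing a unit of demand between $x_a$ and $x_b$ for every $\{a,b\} \in E_{K_N}$. Using $\1_{\{\gamma_1 \cap \gamma_2 \neq \eset\}} \leq |\gamma_1 \cap \gamma_2|$, the crossing congestion is bounded by the $\ell_2$ vertex congestion:
\[
\cross_G(\Lambda',\varphi) \leq \sum_{\gamma_1,\gamma_2 \in \cP_G} \Lambda'(\gamma_1)\Lambda'(\gamma_2)\,\1_{\{\gamma_1 \cap \gamma_2 \neq \eset\}} \leq \sum_{w \in V_G} c_{\Lambda'}(w)^2\mper
\]

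The final---and most delicate---step is to show that the rightmost sum is at most $|V^\bullet| \cdot C^2$. Two intuitions drive this. For $w \in V_G \setminus V^\bullet$, the vertex $w$ is interior to some ``pipe'' $P$ of some $\tilde T_u$; any tree segment in $T_u$ passing through $w$ must traverse all of $P$, so $c_{\Lambda'}(w)$ coincides with the contribution already recorded at $P$'s endpoints, which lie in $V^\bullet$. For $w \in V^\bullet$, each path $\gamma'$ passing through $w$ traces a unique $u$ with $w \in T_u$, and the total such flow is bounded by $c_\Lambda(u) \leq C$. I expect the main obstacle to lie in converting these intuitions into a clean bound: this will require normalizing the tree representation so that distinct trees $T_u,T_v$ meet only at their designated intersection vertex $y_{uv}$ (likely by pre-subdividing $G$ and making careful Steiner choices), after which the pipe-interior contributions can be charged cleanly to pipe endpoints and each vertex of $V^\bullet$ satisfies $c_{\Lambda'}(w) \leq C$, completing the estimate.
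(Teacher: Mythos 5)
Your setup (push the extremal $K_n$-flow from $\hat G$ into $G$ along the regions, then bound crossings) is the same as the paper's, but the accounting step has a genuine gap. The chain $\cross_G(\Lambda',\varphi) \leq \sum_{w \in V_G} c_{\Lambda'}(w)^2 \leq |V^\bullet|\, C^2$ cannot work: the middle sum ranges over \emph{all} of $V_G$, and every interior vertex of a ``pipe'' contributes its own positive term. A single pipe of length $\ell$ carrying flow $C$ already contributes $\ell\, C^2$, so the $\ell_2$ congestion scales with the total size of the trees $T_u$, not with $|V^\bullet|$. The observation that the congestion of a pipe-interior vertex ``coincides with'' that of the pipe's endpoints does not remove these terms from the sum; it only says they are redundant, which is exactly why one must not relax the indicator $\1_{\{\gamma_1\cap\gamma_2\neq\emptyset\}}$ to a sum over all shared vertices. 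The natural repair--replacing the sum over $V_G$ by a sum over $V^\bullet$ only--requires that any two intersecting pushed-forward paths share a vertex of $V^\bullet$, and this fails: $R_u \cap R_v$ may contain many vertices besides the designated $y_{uv}$, and two paths can cross at such a vertex while lying on pipes of $\tilde T_u$ and $\tilde T_v$ respectively, sharing no branching vertex. The normalization you defer to (``distinct trees meet only at their designated intersection vertex'') cannot in general be achieved inside $G$ itself: splitting unwanted intersection vertices produces a different ambient graph, whereas $\cross_G^*$ requires the flow to live in $G$.

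The paper avoids this entirely by keeping the indicator and charging each crossing \emph{pair} exactly once to an element of $V_{\hat G} \cup E_{\hat G}$ rather than to a vertex of $G$: if $\check\gamma_1$ and $\check\gamma_2$ meet at a point $x \in R_{u_i} \cap R_{v_j}$, the weight $\Lambda(\gamma_1)\Lambda(\gamma_2)$ is charged to $u_i$ when $u_i = v_j$ and to the edge $\{u_i, v_j\} \in E_{\hat G}$ otherwise. The total charged to a vertex $u$ is at most $c_{\Lambda}(u)^2$ and to an edge $\{u,v\}$ at most $(c_{\Lambda}(u)+c_{\Lambda}(v))^2$, which sums to $(4m+n)\|c_{\Lambda}\|_\infty^2$ with no assumption on where or how often the regions intersect. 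I'd suggest abandoning the Steiner-tree normalization and the set $V^\bullet$ altogether and charging crossings back to $\hat G$ in this way; note also that even your intended endgame would not give $c_{\Lambda'}(w) \leq C$ at the meeting vertices $y_{uv}$ (which lie in two trees and so can carry up to $2C$), so the constant would come out differently in any case.
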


Before moving to the proof, we state the main result of this section. It follows immediately from the conjunction of \pref{thm:mat} and \pref{thm:blr}. (One does not require a lower bound on $n$ as in \pref{thm:blr} because the bound $\vcong_{\infty}(\hat G) \geq |V_{\hat G}|$ always holds.)

\begin{corollary}\label{cor:congestion} Suppose $\hat G$ is a connected graph and $\hat G \in \rig(G)$ for some graph $G$ that excludes $K_h$ as a minor. If $n=|V_{\hat G}|$ and $m=|E_{\hat G}|$, then \[ \vcong_{\infty}(\hat G) \gtrsim \sqrt{\frac{\cross_G^*(K_n)}{m}} \gtrsim \frac{n^2}{h\sqrt{m \log h}}\,. \] In particular, \pref{thm:duality} yields \[ \cspread_{1}(\hat G) \gtrsim \frac{n}{h \sqrt{m \log h}}\,. \] \end{corollary}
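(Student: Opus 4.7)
The plan is to combine \pref{thm:mat} and \pref{thm:blr} to lower-bound $\vcong_{\infty}(\hat G)$, and then invoke \pref{thm:duality} to transfer the bound to $\cspread_1(\hat G)$. The argument is essentially a concatenation of the three earlier results, executed in three short steps.

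First, from crossings to vertex congestion: since $\hat G$ is connected, $m \geq n-1$, so $4m + n \leq 6m$ whenever $m \geq 1$ (the case $m=0$ is trivial). Rearranging the inequality $\cross_G^*(K_n) \leq (4m+n)\,\vcong_{\infty}(\hat G)^2$ from \pref{thm:mat} then gives $\vcong_{\infty}(\hat G) \gtrsim \sqrt{\cross_G^*(K_n)/m}$, which is the first of the two claimed inequalities.

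Next, I would lower-bound $\cross_G^*(K_n)$. If $n \geq K h \sqrt{\log h}$, where $K$ is the constant from \pref{thm:blr}, then its sharper case yields $\cross_G^*(K_n) \gtrsim n^4/(h^2 \log h)$, and substituting into the first step gives $\vcong_{\infty}(\hat G) \gtrsim n^2/(h\sqrt{m \log h})$. For the complementary range $n < K h \sqrt{\log h}$, where \pref{thm:blr} does not apply directly, I would fall back on the trivial bound $\vcong_{\infty}(\hat G) \geq n-1$: for any admissible flow $\Lambda$ and any vertex $v$, $c_{\Lambda}(v) \geq \sum_{u \neq v} \Lambda[u,v] = n-1$. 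A short check, using $\sqrt{m \log h} \geq \sqrt{\log h}$ together with $n < K h \sqrt{\log h}$, shows that $n^2/(h\sqrt{m \log h}) = O(n)$ in this range, so the trivial bound already suffices up to an absolute constant.

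Finally, for the spread bound, I would apply \pref{thm:duality} with the dual pair $(p,q) = (\infty, 1)$, which gives $\vcong_{\infty}(\hat G) = |V_{\hat G}|^{2-1} \cdot \cspread_1(\hat G) = n \cdot \cspread_1(\hat G)$. Dividing the lower bound on $\vcong_\infty(\hat G)$ by $n$ then yields the second claim, $\cspread_1(\hat G) \gtrsim n/(h\sqrt{m \log h})$. The only mild subtlety is the case split in the middle step; this is exactly what the parenthetical remark immediately preceding the corollary anticipates when it notes that the trivial bound $\vcong_{\infty}(\hat G) \geq |V_{\hat G}|$ removes the need for a lower bound on $n$ in \pref{thm:blr}.
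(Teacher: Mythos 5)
Your proof is correct and follows essentially the same route as the paper: rearranging \pref{thm:mat}, invoking \pref{thm:blr} (with the trivial bound $\vcong_{\infty}(\hat G)\gtrsim n$ covering the range $n < Kh\sqrt{\smash[b]\log\,h}$, exactly as the paper's parenthetical remark indicates), and then applying \pref{thm:duality} with $(p,q)=(\infty,1)$. The arithmetic in each step checks out.
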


\begin{proof}[Proof of \pref{thm:mat}] Let $\{R_u : u \in V_{\hat G}\}$ be a set of regions realizing $\hat G$ over $G$. For every path $\gamma$ in $\hat G$, we specify a path $\check{\gamma}$ in $G$. For each $v \in V_{\hat G}$, fix some distinguished vertex $\check{v} \in R_v$.

   Suppose that $\gamma = (v_1, v_2, \ldots, v_k)$. Let $\check{\gamma}$ be any path $\check{\gamma} = \check{\gamma}_1 \circ \check{\gamma}_2 \circ \cdots \circ \check{\gamma}_{k}$
 which starts at $\check{v_1}$, ends at $\check{v_k}$,
and where for each $i=1,\ldots,k$, the entire subpath $\check{\gamma}_i$ is contained in $R_{v_i}$.  This is possible because each $R_{v_i}$ is connected and $\{v_i,v_{i+1}\} \in E_{\hat G}$ implies that $R_{v_i}$ and $R_{v_{i+1}}$ share at least one vertex of $G$. We describe the path $\check{\gamma}$ as ``visiting'' the regions $R_{v_1}, R_{v_2}, \cdots, R_{v_k}$ in order.

Let $n=|V_{\hat G}|$ and $m=|E_{\hat G}|$. Let $(\Lambda, \f)$ be a proper $K_n$-flow in $\hat G$ achieving $\|c_{\Lambda}\|_{\infty} = \vcong_{\infty}(\hat G)$. The path mapping $\gamma \mapsto \check\gamma$ sends $(\Lambda,\f)$ to a (possibly improper) $K_n$-flow $(\check \Lambda, \check\f)$ in $G$. Establishing the following claim will complete the proof of \pref{thm:mat}.

\begin{claim}\label{claim:crossup} It holds that \begin{equation}\label{eq:crossup} \cross_G(\check\Lambda, \check\f) \leq \sum_{u \in V_{\hat G}} c_{\Lambda}(u)^2 + \sum_{\{u,v\} \in E_{\hat G}} \left(c_{\Lambda}(u) + c_{\Lambda}(v)\right)^2 \leq (4m+n) \|c_{\Lambda}\|^2_{\infty}\,. \end{equation} \end{claim}

We prove the claim as follows:  If $\check{\gamma_1}$ and $\check{\gamma_2}$ intersect in $G$, we charge weight $\Lambda({\gamma_1}) \Lambda({\gamma_2})$ to some element in $V_{\hat G} \cup E_{\hat G}$. If $\check{\gamma_1}$ visits the regions $R_{u_1}, R_{u_2}, \cdots, R_{u_{k_1}}$ and $\check{\gamma_2}$ visits the regions $R_{v_1}, R_{v_2}, \cdots, R_{v_{k_2}}$ and $\check{\gamma_1} \cap \check{\gamma_2} \neq \emptyset$, then they meet at some vertex $x \in R_{u_i} \cap R_{v_j}$.  If $u_i=v_j$, we charge this crossing to $u_i \in V_{\hat G}$.  Otherwise we charge this crossing to the edge $\{u_i,v_j\} \in E_{\hat G}$.

If $u \in V_{\hat G}$ is charged by $(\gamma_1,\gamma_2)$, then $u \in \gamma_1 \cap \gamma_2$. Thus the total weight charged to $u$ is at most \[ \sum_{\gamma \neq \gamma' \in \cP_{\hat G} : u \in \gamma} \Lambda(\gamma) \Lambda(\gamma') \leq c_{\Lambda}(u)^2\,. \] Similarly, if $\{u,v\} \in E_{\hat G}$ is charged by $(\gamma_1, \gamma_2)$, then $\{u,v\} \subseteq \gamma_1 \cup \gamma_2$, thus the total weight charged to $\{u,v\}$ at most $\left(c_{\Lambda}(u)+c_{\Lambda}(v)\right)^2$. Since all of the weight contributing to $\chi_{G}(\check{\Lambda},\check{\f})$ has been charged, this yields the desired claim. \qedhere \end{proof}

\section{Careful minors and random separators} \label{sec:careful}

For graphs $H$ and $G$, one says that {\em $H$ is a minor of $G$} if there are pairwise-disjoint connected subsets $\{ A_u \subseteq V_G : u \in V_H \}$ such that $\{u,v\} \in E_H \implies E_G(A_u, A_v) \neq \emptyset$. We will sometimes refer to the sets $\{A_u\}$ as {\em supernodes.} Say that {\em $H$ is a strict minor of $G$} if the stronger condition $\{u,v\} \in E_H \iff E_G(A_u, A_v) \neq \emptyset$ holds. Finally, we say that {\em $H$ is a careful minor of $G$} if $\dot{H}$ is a strict minor of $G$.

\medskip

The next result explains the significance of careful minors for region intersection graphs. We prove it in the next section.

\begin{lemma}\label{lem:careful-minor} If $\hat G \in \rig(G)$ and $\hat G$ has a careful $H$-minor, then $G$ has an $H$-minor. \end{lemma}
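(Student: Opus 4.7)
The plan is to lift the careful minor structure from $\hat{G}$ down to $G$ by replacing each supernode vertex with its representing region, and then use the subdivision bridges to realize adjacencies as actual $G$-edges. Let $\{A_w \subseteq V_{\hat G} : w \in V_{\dot H}\}$ be the strict-minor supernodes witnessing that $\dot H$ is a strict minor of $\hat G$, and recall that $V_{\dot H} = V_H \cup \{e : e \in E_H\}$, where each subdivision vertex $e = \{u,v\}$ is adjacent in $\dot H$ only to $u$ and $v$. Let $\{R_u \subseteq V_G : u \in V_{\hat G}\}$ be the regions realizing $\hat G \in \rig(G)$, and for each $w \in V_{\dot H}$ set $B_w = \bigcup_{v \in A_w} R_v \subseteq V_G$. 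I would first verify that each $B_w$ is connected in $G$: $A_w$ is connected in $\hat G$, each $R_v$ is connected in $G$, and any $\hat G$-edge $\{v,v'\}$ forces $R_v \cap R_{v'} \neq \emptyset$. The strict-minor condition $E_{\hat G}(A_w, A_{w'}) = \emptyset$ for non-adjacent $w,w' \in V_{\dot H}$ immediately yields $B_w \cap B_{w'} = \emptyset$. In particular, the $B_u$ for $u \in V_H$ are pairwise disjoint, the $B_e$ for $e \in E_H$ are pairwise disjoint, and $B_u \cap B_e = \emptyset$ whenever $u \notin e$.

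The remaining issue is to produce, for each $e = \{u,v\} \in E_H$, an actual $G$-edge between the eventual supernode of $u$ and that of $v$; this is where the bridging set $B_e$ is used. Since $\{u,e\},\{e,v\} \in E_{\dot H}$, the strict-minor property gives $B_u \cap B_e \neq \emptyset$ and $B_v \cap B_e \neq \emptyset$. Pick $x_u^e \in B_u \cap B_e$ and $x_v^e \in B_v \cap B_e$, and choose a simple path $Q_e = (q_0, \ldots, q_k)$ in $G[B_e]$ from $x_u^e$ to $x_v^e$. Let $i^* = \max\{i : q_i \in B_u\}$ and $j^* = \min\{j > i^* : q_j \in B_v\}$; both are well-defined since $q_0 \in B_u$, $q_k \in B_v$, and $B_u \cap B_v = \emptyset$, and the middle segment $I_e = \{q_{i^*+1}, \ldots, q_{j^*-1}\}$ lies in $B_e \setminus (B_u \cup B_v)$ by the choice of $i^*,j^*$ (possibly $I_e = \emptyset$ in the degenerate case $i^* = j^* - 1$).

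For each $e \in E_H$ assign $I_e$ arbitrarily to one of its two endpoints, and define
\[
C_u \;=\; B_u \cup \bigcup_{\substack{e \ni u \\ I_e \text{ assigned to } u}} I_e, \qquad u \in V_H.
\]
I would then check the three $H$-minor properties. Connectedness of $C_u$: each added $I_e$ is a $G$-path attached to $B_u$ via the edge $\{q_{i^*}, q_{i^*+1}\}$. Pairwise disjointness of the $C_u$'s: any added middle segment lies in some $B_e$, and distinct middle segments live in disjoint $B_e$'s, while every $I_e$ avoids every $B_{u'}$ with $u' \in V_H$ (since $I_e \subseteq B_e \setminus (B_u \cup B_v)$ and $B_e$ is disjoint from $B_{u'}$ whenever $u' \notin e$). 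Adjacency: for every $e = \{u,v\} \in E_H$ the $G$-edge $\{q_{j^*-1}, q_{j^*}\}$ connects $C_u$ to $C_v$, since $q_{j^*} \in B_v \subseteq C_v$ and $q_{j^*-1}$ lies either in $I_e \subseteq C_u$ or, in the degenerate case $i^* = j^*-1$, equals $q_{i^*} \in B_u \subseteq C_u$ (possibly after swapping the roles of $u$ and $v$ according to which endpoint received $I_e$).

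The principal obstacle, and the reason the hypothesis requires a \emph{careful} (subdivided strict) minor rather than merely a minor or strict minor, is precisely this last point: if $u,v \in V_H$ were themselves adjacent in the strict-minor picture, then $B_u$ and $B_v$ could overlap yet have no direct $G$-edge between them, and no simple inflation of $A_u, A_v$ would produce disjoint supernodes joined by a $G$-edge. The subdivision forces a nontrivial bridge supernode $A_e$ between $A_u$ and $A_v$, and its inflation $B_e$ is a connected set in $G$ touching both $B_u$ and $B_v$ yet disjoint from every other $B_{u'}$, which is exactly what the reassignment of the middle segment $I_e$ exploits.
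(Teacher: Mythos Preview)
Your proof is correct and follows essentially the same strategy as the paper's: inflate each $\hat G$-supernode to the union of its regions in $G$, use strictness of the $\dot H$-minor to obtain the needed disjointness, and route the required adjacencies through the subdivision bridges. The only cosmetic difference is that the paper first invokes a structural lemma (\pref{lem:careful-struct}) reducing each subdivision supernode to a single vertex $w_{uv} \in V_{\hat G}$, so the connecting path for edge $\{u,v\}$ lies inside the single region $R_{w_{uv}}$, whereas you work directly with the full bridge $B_e$ and absorb the middle segment $I_e$ into one endpoint supernode.
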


We now state the main result of this section; its proof occupies Sections \ref{sec:chopping}--\ref{sec:warm}.

\begin{theorem}\label{thm:random-sep} For any $h \geq 1$, the following holds. Suppose that $G$ excludes a careful $K_h$ minor. Then there is a number $\alpha \leq O(h^2)$ such that for any $\omega : V_G \to \R_+$ and $\Delta > 0$, the conformal graph $(G,\omega)$ admits an $(\alpha,\Delta)$-random separator. \end{theorem}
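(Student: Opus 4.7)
The plan is to construct $\bm{S}$ via an iterative Klein--Plotkin--Rao (KPR) style ball-cutting procedure with $h-1$ rounds, adapted both to the conformal metric $\dist_\omega$ on $G$ and to the requirement that each round produce an \emph{induced} partition. Set $\delta = \Delta/(4h)$, and initialize $\cP = \{V_G\}$, $\bm{S} = \emptyset$. In round $i = 1, \ldots, h-1$, for each cluster $C \in \cP$ whose $\dist_\omega$-diameter $\max_{u,v \in C} \dist_\omega(u,v)$ exceeds $\Delta$, deterministically choose a root $r_C \in C$, sample $\bm{\rho}_C$ uniformly from $[0, \delta]$, and transfer into $\bm{S}$ every $v \in C$ whose ``straddle interval'' $J_v \seteq \left(\dist_\omega(r_C, v) - \tfrac12 \omega(v),\, \dist_\omega(r_C, v) + \tfrac12 \omega(v)\right]$ contains some translate $\bm{\rho}_C + k\delta$, $k \in \Z_{\geq 0}$. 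Refine $C$ in $\cP$ into the connected components of $G[C \setminus \bm{S}]$.

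Two of the three required properties are short. For the induced-partition property, take any edge $\{u,v\}$ with both endpoints uncut in round $i$ and both lying in the same pre-round cluster $C$: the direct edge, of length $(\omega(u)+\omega(v))/2$, forces $|\dist_\omega(r_C, u) - \dist_\omega(r_C, v)| \le (\omega(u)+\omega(v))/2$, so $J_u$ and $J_v$ overlap; their union is a cut-free interval and hence has length less than $\delta$, so $u$ and $v$ lie strictly between the same two consecutive translates and are placed in the same new cluster. For the skinny-ball bound, fix $v \in V_G$ and $R > 0$, and suppose $v$ lies in some round-$i$ cluster $C$ that is being cut. For every $w \in \cB_\omega(v, R) \cap C$, the defining inequality $\dist_\omega(v,w) < R - \omega(w)/2$ places $J_w$ strictly inside the length-$2R$ open interval $\left(\dist_\omega(r_C, v) - R,\, \dist_\omega(r_C, v) + R\right)$; since $\bm{\rho}_C$ is uniform on $[0,\delta]$, a translate $\bm{\rho}_C + k\delta$ meets that interval with probability at most $2R/\delta = 8hR/\Delta$. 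A union bound over the $h-1$ rounds yields $\Pr[\cB_\omega(v, R) \cap \bm{S} \neq \emptyset] \le O(h^2 R/\Delta)$, so $\alpha = O(h^2)$.

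The substantive step is the diameter bound: after $h-1$ rounds, every surviving cluster has $\dist_\omega$-diameter at most $\Delta$. Suppose for contradiction that a chain of nested clusters $C_1 \supsetneq C_2 \supsetneq \cdots \supsetneq C_h$ with roots $r_1, \ldots, r_{h-1}$ ends in a $C_h$ still containing two vertices of $\dist_\omega$-distance $> \Delta$. I will extract a careful $K_h$ minor of $G$, contradicting the hypothesis. Concretely, using the BFS structure around each $r_i$ in the metric $\dist_\omega$, together with cut vertices deposited along $\dist_\omega$-geodesics between successive roots, I will build $h$ pairwise-disjoint connected supernodes $A_1, \ldots, A_h$ (BFS balls about the $r_i$ of carefully decreasing radii) and $\binom{h}{2}$ pairwise-disjoint singletons $\{s_{ij}\}$ with $s_{ij} \in \bm{S}$ a cut vertex from round $\min(i,j)$ on a geodesic linking the contemporaneous clusters containing $r_i$ and $r_j$. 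Chosen correctly, each $s_{ij}$ is adjacent in $G$ to both $A_i$ and $A_j$, while no pair $A_i, A_j$ with $i \neq j$ is itself $G$-adjacent---this is precisely a strict $\dot{K_h}$-minor of $G$, i.e., a careful $K_h$ minor.

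The main obstacle is this last step. The construction must guarantee (i) pairwise disjointness of the $h + \binom{h}{2}$ pieces, (ii) all required adjacencies via the subdivision vertices $s_{ij}$, and crucially (iii) that no two of the large supernodes $A_i, A_j$ are themselves adjacent in $G$. Property (iii) is what distinguishes a careful $K_h$-minor from a plain $K_h$-minor (the latter being freely permitted in our setting: rigs over planar graphs already realize every complete graph). Once the radii and geodesic representatives are chosen appropriately---standard KPR bookkeeping, with the conformal-metric and induced-partition aspects handled by the overlap and interval computations sketched above---the three properties combine to give the claimed $(O(h^2), \Delta)$-random separator.
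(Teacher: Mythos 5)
Your overall strategy---iterated annulus-cutting in the conformal metric, with the induced-partition and cut-probability calculations you give---is the same as the paper's, and those two parts of your argument are fine. But there is a genuine gap at the step you yourself flag as substantive, and it is not just missing bookkeeping. You claim that after $h-1$ rounds every surviving cluster has $\dist_\omega$-diameter at most $\Delta$, or else a careful $K_h$ minor can be extracted. That is not the dichotomy the KPR-style induction delivers. The minor construction needs, at each level $t$, a fresh representative in the current cluster that is far from \emph{all previously used roots}; so what the induction actually yields (Lemma~\ref{lem:spaced} in the paper) is: either a careful $K_h$ minor, or every vertex of the depth-$(h-1)$ cluster lies within $O(h)\times(\text{annulus width})$ of the \emph{set of ancestor roots}. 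Since those $h-1$ roots need not be close to one another, this is not a diameter bound on the cluster at all: the surviving cluster can contain points near two mutually distant roots. The paper converts distance-to-roots into a diameter bound by a second randomized cutting stage ($\shards$/$\shatter$ around the ancestor roots at radius $\approx 21h\Delta$), after which each component sits inside a single ball around one root. Your construction has no analogue of this stage, and without it the diameter property of the random separator is simply not established.

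Two further points. First, the minor construction that you defer to ``standard KPR bookkeeping'' is exactly the non-standard part here: a careful $K_h$ minor requires $E_G(A_i,A_j)=\emptyset$ for $i\neq j$ and that the subdivision vertices form an independent set adjacent only to their two designated supernodes. The paper enforces these via Lemma~\ref{lem:noedge} (two vertices at $\dist_\omega$-distance $>\Delta$ cannot be adjacent), which is only valid after the preprocessing step that places every vertex with $\omega(v)>\Delta$ directly into $\bm{S}$; you never perform this reduction, and without it a single heavy vertex adjacent to two of your supernodes destroys carefulness. Maintaining these separations through the induction is what forces the paper's six invariants (P1)--(P6) and the $21h\Delta$ spacing; none of this is routine. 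Second, your choice $\delta=\Delta/(4h)$ is too small to make the non-adjacency mechanism work: the separation guarantees scale with the annulus width, so with width $\Delta/(4h)$ you could only forbid edges between vertices of weight at most $\Delta/(4h)$, and the preprocessing needed to ensure that would itself violate the cut-probability bound. The paper instead uses width $\Delta$ throughout, accepts a final diameter of $O(h\Delta)$, and rescales at the end.
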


Applying \pref{lem:careful-minor} immediately yields the following.

\begin{corollary}\label{cor:random-sep} Suppose that $G$ excludes a $K_h$ minor and $\hat G \in \rig(G)$. Then there is a number $\alpha \leq O(h^2)$ such that for any $\omega : V_{\hat G} \to \R_+$ and $\Delta > 0$, the conformal graph $(\hat G,\omega)$ admits an $(\alpha,\Delta)$-random separator. \end{corollary}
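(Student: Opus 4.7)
The plan is to combine \pref{lem:careful-minor} with \pref{thm:random-sep}; together they yield the corollary essentially immediately. First I would apply \pref{lem:careful-minor} in its contrapositive form: since $\hat G \in \rig(G)$ and $G$ excludes $K_h$ as a minor, $\hat G$ cannot contain a careful $K_h$ minor, for otherwise the lemma would force $G$ to contain a $K_h$ minor, contradicting the hypothesis.

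With $\hat G$ now known to exclude a careful $K_h$ minor, I would invoke \pref{thm:random-sep} directly, applied to $\hat G$ with the given conformal weight $\omega$ and scale $\Delta > 0$. The theorem produces an $(\alpha, \Delta)$-random separator for $(\hat G, \omega)$ with $\alpha \leq O(h^2)$, which is exactly the conclusion claimed.

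The real content of the argument lies upstream, not in this corollary. The structural lemma \pref{lem:careful-minor} is what makes ``careful minors'' the right notion of forbidden substructure for region intersection graphs: unlike ordinary minors, which can be abundant in $\hat G$ even when $G$ is planar, careful minors faithfully pull back to minors of $G$. The heavy lifting is done by \pref{thm:random-sep}, which is expected to adapt the Klein--Plotkin--Rao iterative random partitioning scheme so that the iteration exploits the absence of careful $K_h$ minors (rather than ordinary $K_h$ minors) at each step; this is precisely where the factor $\alpha \leq O(h^2)$ would arise. Granting both results, however, the present corollary reduces to a one-step logical deduction and requires no further argument.
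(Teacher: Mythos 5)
Your proposal is correct and matches the paper's argument exactly: the paper derives the corollary by noting that \pref{lem:careful-minor} (in contrapositive) shows $\hat G$ excludes a careful $K_h$ minor, and then applying \pref{thm:random-sep} to $(\hat G,\omega)$. Nothing further is needed.
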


The proof of \pref{thm:random-sep} is based on a procedure that iteratively removes random sets of vertices from the graph in rounds.  It is modeled after the argument of \cite{FT03} which is itself based on \cite{KPR93}. For an exposition of the latter argument, one can consult the book \cite[Ch. 3]{Ostro13}.

\subsection{Careful minors in rigs} \label{sec:careful-minors}

The next lemma clarifies slightly the structure of careful minors.

\begin{lemma}\label{lem:careful-struct} $G$ has a careful $H$-minor if and only if there exist pairwise-disjoint connected subsets $\{B_u \subseteq V_G : u \in V_H\}$ and distinct vertices $W = \left\{ w_{xy} \in V_G \setminus \bigcup_{u \in V_H} B_u : \{x,y\} \in E_H\right\}$ such that \begin{enumerate} \item $E_G(B_u, B_v) = \emptyset$ for $u,v \in V_H$ with $u \neq v$. \item $W$ is an independent set. \item For every $\{x,y\} \in E_H$, it holds that \begin{equation}\label{eq:careful-struct-prop} E_G(w_{xy}, B_u) \neq \emptyset \iff u \in \{x,y\}\,. \end{equation} \end{enumerate} \end{lemma}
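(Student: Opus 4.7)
The plan is to prove both directions by direct construction, using the definition that $G$ has a careful $H$-minor precisely when $\dot H$ is a strict minor of $G$.

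For the easier ($\Leftarrow$) direction, I would take the given $\{B_u\}$ and $W = \{w_{xy}\}$ and exhibit a strict minor model of $\dot H$ in $G$ by setting the supernodes to be $A_u = B_u$ for $u \in V_H$ and $A_{m_{xy}} = \{w_{xy}\}$ for $\{x,y\} \in E_H$. These supernodes are pairwise disjoint since $W$ is disjoint from $\bigcup_u B_u$ and the $w_{xy}$ are distinct, and each is connected (singletons are trivially so). The three hypotheses translate exactly into the strict-minor adjacency conditions for $\dot H$: condition (1) kills $A_u$--$A_v$ adjacencies, independence of $W$ kills $A_{m_{xy}}$--$A_{m_{x'y'}}$ adjacencies, and condition (3) gives $E_G(A_u, A_{m_{xy}}) \neq \emptyset \iff u \in \{x,y\}$, matching $E_{\dot H}$.

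For the harder ($\Rightarrow$) direction, start from a strict minor model of $\dot H$ with supernodes $\{A_u : u \in V_H\} \cup \{A_{m_{xy}} : \{x,y\} \in E_H\}$. The obstacle is that each $A_{m_{xy}}$ is merely a connected set, not necessarily a single vertex. Since the strict minor property gives $E_G(A_{m_{xy}}, A_x) \neq \emptyset$ and $E_G(A_{m_{xy}}, A_y) \neq \emptyset$ but no other adjacencies for $A_{m_{xy}}$, I would fix, for each $\{x,y\} \in E_H$, a shortest path $P_{xy} = (p_0^{xy}, \ldots, p_{k_{xy}}^{xy})$ inside $G[A_{m_{xy}}]$ with $p_0^{xy}$ adjacent to $A_x$ and $p_{k_{xy}}^{xy}$ adjacent to $A_y$. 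After arbitrarily orienting each edge $\{x,y\}$ of $H$ (so that one endpoint plays the role of ``$y$''), set $w_{xy} = p_0^{xy}$ and absorb the tail $p_1^{xy}, \ldots, p_{k_{xy}}^{xy}$ into $B_y$. Finally, let $B_u = A_u \cup \bigcup\{p_1^{xy}, \ldots, p_{k_{xy}}^{xy} : \{x,y\} \in E_H, \text{$u$ plays the role of $y$}\}$.

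The main verification is that the three conditions hold. Disjointness of the $B_u$ follows because the $A_{m_{xy}}$ are pairwise disjoint and disjoint from each $A_v$; connectedness of $B_u$ holds because each absorbed tail is a path glued to $A_u$ through $p_{k_{xy}}^{xy}$. For condition (1), I would observe that $A_u$ and $A_v$ are non-adjacent in $G$ (as $\{u,v\} \notin E_{\dot H}$), that the absorbed tails lie in $A_{m_{\bullet u}}$ which the strict minor forbids from meeting $A_v$ for $v \notin \{\bullet, u\}$, and crucially that the shortest-path choice of $P_{xy}$ prevents any $p_i^{xy}$ with $i \geq 1$ from being adjacent to $A_x$ (otherwise the subpath $(p_i^{xy}, \ldots, p_{k_{xy}}^{xy})$ would give a strictly shorter $A_x$--$A_y$ path in $A_{m_{xy}}$, since this subpath goes the wrong direction — I would state the shortest-path condition carefully to rule this out). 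For condition (2), $w_{xy}$ and $w_{x'y'}$ lie in different non-adjacent $A_m$-supernodes, so $W$ is independent. Condition (3) is immediate: $w_{xy} = p_0^{xy}$ is adjacent to $A_x \subseteq B_x$ by construction, and to $B_y$ either via $p_1^{xy}$ (if $k_{xy} \geq 1$) or directly to $A_y \subseteq B_y$ (if $k_{xy} = 0$), while adjacency to any other $B_u$ is ruled out by the strict minor structure combined with disjointness of the various $A_{m_{\bullet\bullet}}$.

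The only delicate point, which I would flag as the main place to be careful, is the shortest-path argument ensuring that absorbing the tail into $B_y$ does not create a $B_x$--$B_y$ edge. I would formalize ``shortest path from $A_x$ to $A_y$ in $A_{m_{xy}}$'' to mean a path minimizing $k_{xy}$ among all paths in $G[A_{m_{xy}}]$ whose first vertex is adjacent to $A_x$ and whose last vertex is adjacent to $A_y$; then minimality forces no internal vertex $p_i^{xy}$ with $i \geq 1$ to be adjacent to $A_x$, which is precisely what makes the absorption compatible with condition (1).
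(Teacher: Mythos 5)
Your proof is correct and follows essentially the same route as the paper's: the ($\Leftarrow$) direction contracts each $w_{xy}$ to a singleton supernode of $\dot H$, and the ($\Rightarrow$) direction routes a path through the subdivision supernode $A_{m_{xy}}$, designates one of its vertices as $w_{xy}$, and absorbs the remainder into the $B$-sets. If anything, your shortest-path minimality argument is slightly more careful than the paper's, which takes an arbitrary simple path $\gamma_{xy}$ and asserts that deleting an internal vertex $w_{xy}$ disconnects $G[A_x \cup A_y \cup \gamma_{xy}]$ into two components --- a claim that implicitly needs the path to be chordless, which is exactly the issue your minimality condition addresses.
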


\begin{proof} The ``only if'' direction is straightforward.  We now argue the other direction.

Let $\{A_u \subseteq V_G : u \in V_{\dot{H}} \}$ witness a strict $\dot{H}$-minor in $G$. For every $\{x,y\} \in E_H$, there exists a simple path $\gamma_{xy}$ with one endpoint in $A_x$, one endpoint in $A_y$, and whose internal vertices satisfy $\gamma^{\circ}_{xy} \subseteq A_{m_{xy}}$ and $\gamma_{xy}^{\circ} \neq \emptyset$, where $m_{xy} \in V_{\dot{H}}$ is the vertex subdividing the edge $\{x,y\}$.

Choose some vertex $w_{xy} \in \gamma^{\circ}_{xy}$.  Removal of $w_{xy}$ breaks the graph $G[A_x \cup A_y \cup \gamma_{xy}]$ into two connected components; define these as $B_x$ and $B_y$ (so that $A_x \subseteq B_x$ and $A_y \subseteq B_y$).
Property (1) is verified by strictness of the $\dot{H}$ minor and the fact that the the non-subdivision vertices $V_{\dot{H}} \setminus \left\{m_{xy} : \{x,y\} \in V_H \right\}$ form an independent set in $\dot{H}$. Similarly, properties (2) and (3) follow from strictness of the $\dot{H}$ minor and the fact that $N_{\dot{H}}(m_{xy}) = \{x,y\}$ for $\{x,y\} \in E_H$. \end{proof}

We now prove that if $\hat G \in \rig(G)$, then careful minors in $\hat G$ yield minors in $G$.

\begin{proof}[Proof of \pref{lem:careful-minor}] Let $\{ R_v \subseteq V_G : v\in V_{\hat G}\}$ be a set of regions realizing $\hat G$. Assume that $\hat G$ has a careful $H$-minor and let $\{B_u \subseteq V_{\hat G} : u \in V_H\}$ and $W=\{w_{xy} : \{x,y\} \in E_H\}$ be the sets guaranteed by \pref{lem:careful-struct}.

For $u \in V_H$, define \[ A_u = \bigcup_{v \in B_u} R_v\,. \] Since $B_u$ is connected in $\hat G$ and the regions $\{R_v : v \in V_{\hat G}\}$ are each connected in $G$, it follows that $A_u$ is connected in $G$.

Let us verify that the sets $\{A_u : u \in V_H\}$ are pairwise disjoint. If $x \in A_u \cap A_v$ for $u \neq v$, then there must be regions $R_a$ and $R_b$ with $a \in B_u, b \in B_v$ and $x \in R_a \cap R_b$.  This would imply $\{a,b\} \in E_{\hat G}$, but \pref{lem:careful-struct}(1) asserts that $E_{\hat G}(B_u,B_v)=\emptyset$.

We will show that there exist pairwise vertex-disjoint paths $\left\{\gamma_{uv} \subseteq V_G : \{u,v\} \in E_H\right\}$ with \begin{equation}\label{eq:disjoint-gamma} \gamma_{uv} \cap \left(\bigcup_{x\in V_H} A_x\right) \subseteq A_u \cup A_v\,, \end{equation} and such that $\gamma_{uv}$ connects $A_u$ to $A_v$.  This will yield the desired $H$-minor in $G$.

Fix $\{u,v\} \in E_H$. From \pref{lem:careful-struct}(3), we know that the connected set $R_{w_{uv}}$ shares a vertex with $A_u$ and also
shares a (different) vertex with $A_v$. Thus we can choose $\gamma_{uv}$ as above with $\gamma_{uv} \subseteq R_{w_{uv}}$. Note that \pref{lem:careful-struct}(2) (in particular, \eqref{eq:careful-struct-prop}) also yields $R_{w_{uv}} \cap A_x = \emptyset $ for any $x \in V_H \setminus \{u,v\}$, verifying \eqref{eq:disjoint-gamma}.

Thus we are left to verify that the sets $\{R_{w_{uv}} : \{u,v\} \in E_H\}$ are pairwise vertex-disjoint. But this also follows from \eqref{eq:careful-struct-prop}, specifically the fact that $W=\{w_{uv} : \{u,v\} \in E_H\}$ is an independent set in $\hat G$. \end{proof}

\subsection{Chopping trees} \label{sec:chopping}

Observe that by a trivial approximation argument,
it suffices to prove \pref{thm:random-sep} for any conformal metric $\omega : V_G \to (0,\infty)$, i.e.,
one that satisfies
\begin{equation}\label{eq:nonzero}
   \omega(v) > 0\quad\forall v \in V_G
\end{equation}

Let us now fix such a conformal metric $\omega : V_G \to (0,\infty)$ on $G$ and a number $\Delta > 0$. Fix an arbitrary ordering $v_1, v_2, \ldots, v_{|V_G|}$ of $V_G$ in order to break ties in the argument that follows.

\begin{figure} \begin{center}
\subfigure[Illustration of a fat sphere]{ \includegraphics[width=6cm]{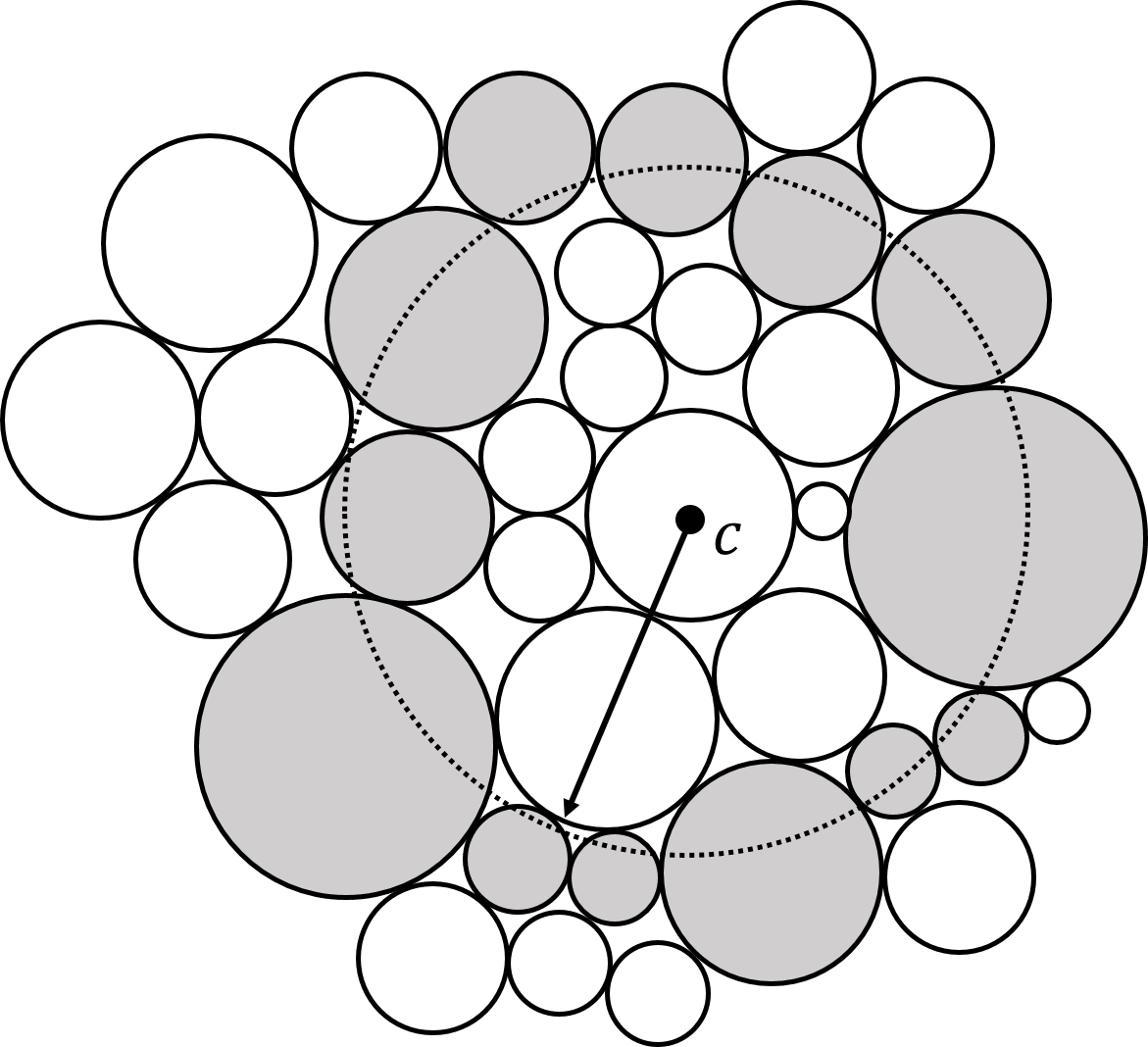}\label{fig:fatsphere} } \hspace{0.6in}
\subfigure[Chopping a graph into subgraphs]{ \makebox[5cm]{ \includegraphics[width=5.5cm]{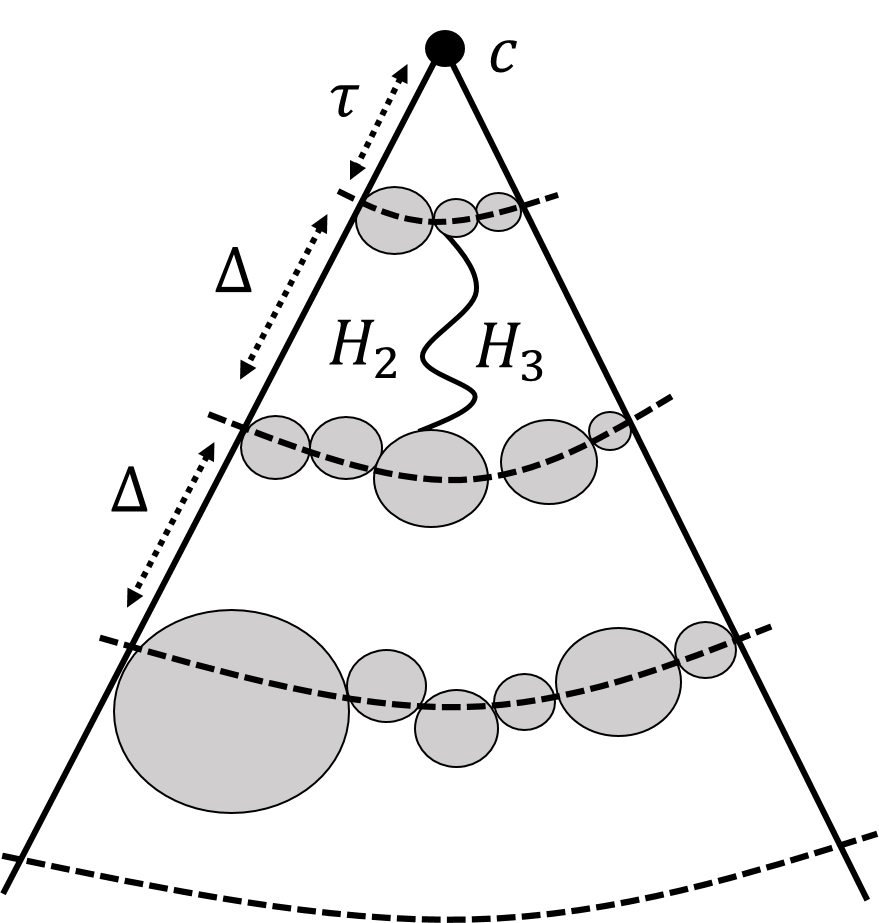}\label{fig:chop}} } \caption{The chopping procedure} \end{center} \end{figure}

We will use $\Indc(G)$ to denote the collection of all connected, induced subgraphs of $G$. For such a subgraph $H \in \Indc(G)$, we use $\dist_{\omega}^H$ to denote the induced distance coming from the conformal metric $(H,\omega|_{V_H})$. For $c \in V_H$ and $R > 0$, let us define the skinny ball, fat ball, and fat sphere, respectively: \begin{align*} {\cB}^H_{\omega}(c,R) &= \left\{ v \in V_H : \dist_{\omega}^H(c,v) < R - \frac12 \omega(v) \right\},\\ \vvB_{\omega}^H(c,R) &= \left\{ v \in V_H : \dist_{\omega}^H(c,v) \leq R + \frac12 \omega(v) \right\}, \\
\vvS_{\omega}^H(c,R) &= \left\{ v \in V_H : R \in \left[\dist_{\omega}^H(c,v) - \tfrac12 \omega(v), \dist_{\omega}^H(c,v) + \tfrac12 \omega(v)\right]\right\} \\ &= \vvB_{\omega}^H(c,R) \setminus \cB_{\omega}^H(c,R)\,. \end{align*} See \pref{fig:fatsphere} for a useful (but non-mathematical) illustration where one imagines a vertex $v \in V_H$ as a disk of radius $\frac12 \omega(v)$. Note that ${\cB}_{\omega}^H(c,R)$ is the connected component of $c$ in the graph $H[V_H \setminus \vvS_{\omega}^H(c,R)]$.
The next fact requires our assumption \eqref{eq:nonzero}.

\begin{fact}\label{fact:unique} If $\gamma \subseteq V_H$ is a $\dist^H_{\omega}$-shortest path emanating from $c \in V_H$, then for every $R > 0$, it holds that $|\gamma \cap \vvS_{\omega}^H(c,R)| \leq 1$. \end{fact}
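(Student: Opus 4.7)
The plan is to exploit the fact that along a $\dist^H_\omega$-shortest path emanating from $c$, the distances from $c$ increase in a very predictable way governed by the $\omega$ values at consecutive vertices. First I would parameterize $\gamma = (c = u_0, u_1, \ldots, u_k)$ and set $d_i := \dist^H_\omega(c, u_i)$; because $\gamma$ is a shortest path, $d_i$ equals the length of the prefix of $\gamma$ from $u_0$ to $u_i$, giving the telescoping identity
\[
d_j - d_i \;=\; \tfrac{1}{2}\omega(u_i) + \omega(u_{i+1}) + \cdots + \omega(u_{j-1}) + \tfrac{1}{2}\omega(u_j)
\]
for every $0 \leq i < j \leq k$.

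The main step is to assume two distinct vertices $u_i, u_j$ with $i < j$ both lie in $\vvS^H_\omega(c, R)$ and derive that there can be no interior vertex between them on $\gamma$. Membership in $\vvS^H_\omega(c, R)$ yields $R \geq d_j - \tfrac12 \omega(u_j)$ and $R \leq d_i + \tfrac12 \omega(u_i)$, whence $d_j - d_i \leq \tfrac12 (\omega(u_i) + \omega(u_j))$. Comparing this with the telescoping identity forces $\omega(u_{i+1}) + \cdots + \omega(u_{j-1}) \leq 0$. By the standing assumption \eqref{eq:nonzero} that $\omega$ is strictly positive, this sum is either empty or strictly positive, so it must be empty, which means $j = i+1$.

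The main obstacle is the residual edge case $j = i+1$: all of the above inequalities become equalities and the fat balls around adjacent vertices on $\gamma$ touch at the single radius $R = d_i + \tfrac12 \omega(u_i) = d_{i+1} - \tfrac12 \omega(u_{i+1})$, at which both $u_i$ and $u_{i+1}$ formally satisfy the membership condition for $\vvS^H_\omega(c, R)$. To close this gap I would appeal to the arbitrary ordering $v_1, v_2, \ldots, v_{|V_G|}$ of $V_G$ fixed at the beginning of \pref{sec:chopping} precisely for tie-breaking, using it to declare that at such a boundary radius only the earlier-indexed vertex counts as lying on the fat sphere. This convention is consistent with the downstream use of $\vvS^H_\omega$ in the chopping procedure, since the radii that will be sampled there come from continuous distributions and such boundary events occur with probability zero.
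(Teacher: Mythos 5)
The paper states this as a bare Fact with no proof, so there is nothing to diverge from: your telescoping argument is exactly the intended one. The identity $d_j-d_i=\tfrac12\omega(u_i)+\omega(u_{i+1})+\cdots+\omega(u_{j-1})+\tfrac12\omega(u_j)$ (valid because subpaths of shortest paths are shortest), combined with the two sphere-membership inequalities, forces $\sum_{i<l<j}\omega(u_l)\le 0$, and assumption \eqref{eq:nonzero} is precisely what kills this unless $j=i+1$. Your observation about the residual case $j=i+1$ is a legitimate catch: since $\vvS^H_\omega$ is defined via closed intervals on both ends, at the single critical radius $R=d_i+\tfrac12\omega(u_i)=d_{i+1}-\tfrac12\omega(u_{i+1})$ two consecutive path vertices do both satisfy the membership condition, so the Fact is false as literally written for that one value of $R$ per edge of $\gamma$. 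However, your proposed repair is off target: the ordering $v_1,\ldots,v_{|V_G|}$ is introduced solely to break ties in the $\argmax$ of \eqref{eq:argmax} and is not meant to redefine sphere membership. The cleaner fixes are either to make one endpoint of the defining interval of $\vvS^H_\omega$ half-open, or to note that downstream (in the proof of \pref{lem:spaced}) uniqueness is only used to \emph{name} the vertices $r_i^{t+1}$ and $w_{i,t+1}$; existence together with the distance estimates is all that is actually used, so in the degenerate case one may simply select either of the two adjacent candidates. With that caveat your proof is complete and correct.
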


For $c \in V_H$ and $\tau \in [0,\Delta]$, let $\cut_{\Delta}(H,c;\tau) = \bigcup_{k \in \Z_+} \vvS_{\omega}^H(c, \tau+k\Delta)$. We define  $\chop_{\Delta}(H,c;\tau)$ as the collection of connected components of the graph $H\left[V_H \setminus\cut_{\Delta}(H,c;\tau)\right]$. See \pref{fig:chop}.

The next lemma is straightforward.

\begin{lemma}\label{lem:random-chop} If $\tau \in [0,\Delta]$ is chosen uniformly at random, then for every $v \in V_H$ and $R \geq 0$, \[ \Pr[\cB_{\omega}^H(v,R) \cap \cut_{\Delta}(H,c;\tau) = \emptyset] \geq 1-\frac{2R}{\Delta}\,. \] \end{lemma}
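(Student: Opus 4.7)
The plan is to reduce the event in question to a translation-invariance statement about the Lebesgue measure on $[0,\Delta]$. Write $d = \dist_\omega^H(c,v)$ and, for each $w \in V_H$, let
\[
I_w \defeq \left[\dist_\omega^H(c,w) - \tfrac12 \omega(w),\ \dist_\omega^H(c,w) + \tfrac12 \omega(w)\right].
\]
By the definition of the fat sphere, $w \in \vvS_\omega^H(c,R')$ holds iff $R' \in I_w$, so that $w \in \cut_\Delta(H,c;\tau)$ iff $\tau + k\Delta \in I_w$ for some $k \in \Z_+$.

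Next I would show that for every $w \in \cB_\omega^H(v,R)$, the interval $I_w$ is contained in $(d - R,\ d + R)$. Indeed, by definition of the skinny ball, $\dist_\omega^H(v,w) + \tfrac12 \omega(w) < R$, and the triangle inequality for $\dist_\omega^H$ gives $|\dist_\omega^H(c,w) - d| \leq \dist_\omega^H(v,w)$; adding $\tfrac12\omega(w)$ to both sides yields that both endpoints of $I_w$ lie strictly within $R$ of $d$. Combining the two paragraphs, if $\cB_\omega^H(v,R) \cap \cut_\Delta(H,c;\tau) \neq \emptyset$, then there exist $w \in \cB_\omega^H(v,R)$ and $k \in \Z_+$ with $\tau + k\Delta \in I_w \subseteq (d-R, d+R)$.

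To finish, I estimate the measure of $E \defeq \{\tau \in [0,\Delta] : \exists k \in \Z_+,\ \tau + k\Delta \in (d-R,\, d+R)\}$. Because $(d-R, d+R)$ has length $2R$, its preimage under the $\Delta$-periodic shift $\tau \mapsto \tau + k\Delta$ has Lebesgue measure in $[0,\Delta]$ equal to $\min(2R, \Delta)$ (the lattice $\{d + k\Delta : k \in \Z_+\}$ induces a covering of the line by translates of $[0,\Delta]$, and the event reduces to ``$\tau \bmod \Delta$ lands in a length-$2R$ set modulo $\Delta$''). If $2R \geq \Delta$ the claimed bound $1 - 2R/\Delta$ is vacuous; otherwise $|E|/\Delta \leq 2R/\Delta$, which gives
\[
\Pr\!\left[\cB_\omega^H(v,R) \cap \cut_\Delta(H,c;\tau) = \emptyset\right] \geq 1 - \frac{2R}{\Delta}.
\]

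The only subtle point is the geometric step: the definition of the skinny ball uses a strict inequality with the $\tfrac12\omega(w)$ slack, and that slack is exactly what is needed to absorb the $\tfrac12\omega(w)$ half-width of the fat sphere $\vvS_\omega^H(c,R')$, so that each relevant $I_w$ fits inside an interval of length $2R$ centered at $d$. Everything else is a routine application of the triangle inequality and translation invariance of Lebesgue measure modulo $\Delta$.
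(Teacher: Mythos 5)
Your proof is correct. The paper states this lemma without proof (``The next lemma is straightforward''), and your argument --- reducing membership of $w$ in $\cut_{\Delta}(H,c;\tau)$ to $\tau+k\Delta$ landing in the interval $I_w$, showing via the triangle inequality that the $\tfrac12\omega(w)$ slack in the skinny ball forces $I_w \subseteq (d-R,d+R)$, and then bounding the measure of the bad set of $\tau$ modulo $\Delta$ by $2R$ --- is precisely the intended routine verification.
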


A {\em $\Delta$-chopping tree of $(G,\omega)$} is a rooted, graph-theoretic tree $\cT(\sigma)$ for some $\sigma : \Indc(G) \to [0,\Delta]$. The nodes of $\cT(\sigma)$ are triples $(H,c,j)$ where $H \in \Indc(G)$, $c \in V_H$, and $j \in \Z_+$. We refer to $c$ as the {\em center} of the node and $j$ as its {\em depth.} We now define $\cT(\sigma)$ inductively (by depth) as follows.

The root of $\cT=\cT(\sigma)$ is $(G,v_1,0)$. For a node $\lambda = (H,c,j)$ of $\cT$, we let $\vec{c}_{\cT}(\lambda)$ denote the sequence of centers encountered on the path from $\lambda$ to the root of $\cT$, not including $\lambda$ itself. If $\chop_{\Delta}(H,c;\sigma(H)) = \emptyset$, then $\lambda$ has no children.

Otherwise, if $\chop_{\Delta}(H,c;\sigma(H))=\{H_i : i \in I\}$, the children of $(H,c,j)$ are $\{(H_i, c_i,j+1)\}$, where \begin{equation}\label{eq:argmax} c_i = \argmax_{x \in V_{H_i}} \dist_{\omega}^G(x, \vec{c}_{\cT}(\lambda) \cup \{c\})\,. \end{equation} In other words, $c_i$ is chosen as the point of $V_{H_i}$ that is furthest from the centers of its ancestors in the {\em ambient metric $\dist_{\omega}^G$.} For concreteness, if the maximum in \eqref{eq:argmax} is not unique, we choose the first vertex (according to the ordering of $V_G$) that achieves the maximum.

A final definition:  We say that a node $\lambda=(H,c,j)$ of a chopping tree $\cT$ is {\em $\beta$-spaced} if the value of the maximum in \eqref{eq:argmax} is at least $\beta$, i.e.,
 \[\dist_{\omega}^G(c,\vec{c}_{\cT}(\lambda)) \geq \beta\,.\]

 Note that the nodes in each level of $\cT(\sigma)$ correspond to the connected components
 that result after removing a subset of nodes from $G$.
 We state the following consequence.

\begin{lemma}\label{lem:alevel}
   Suppose that $\cT(\sigma)$ is a $\Delta$-chopping tree for some $\Delta > 0$.
   Consider an integer $k \geq 0$,
   and let $\{H_i : i \in I\} \subseteq \Indc(G)$ denote the collection of
   induced subgraphs occuring in the depth-$k$ nodes of $\cT(\sigma)$.
   Then each $H_i$ is a unique connected component in the induced graph $G\left[\bigcup_{i \in I} V_{H_i}\right]$.
\end{lemma}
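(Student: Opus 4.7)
The plan is to proceed by induction on the depth $k$. The base case $k=0$ is vacuous: the only depth-$0$ node is the root $(G,v_1,0)$, so $I$ is a singleton and $G$ is trivially the unique connected component of itself.

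For the inductive step, I would let $\{H'_j : j \in J\}$ denote the induced subgraphs that appear at depth $k-1$, with centers $\{c_j\}$. By the inductive hypothesis, each $H'_j$ is the unique connected component of $G\left[\bigcup_j V_{H'_j}\right]$ containing its vertex set. Each depth-$k$ subgraph $H_i$ is, by the definition of $\cT(\sigma)$, an element of $\chop_{\Delta}(H'_{j(i)},c_{j(i)};\sigma(H'_{j(i)}))$ for a unique parent index $j(i) \in J$; that is, $H_i$ is a connected component of $H'_{j(i)}[V_{H'_{j(i)}} \setminus \cut_\Delta(H'_{j(i)},c_{j(i)};\sigma(H'_{j(i)}))]$. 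Because $H'_j \in \Indc(G)$ means $H'_j = G[V_{H'_j}]$, this is the same as a connected component of $G[V_{H'_{j(i)}} \setminus \cut_\Delta(\cdots)]$, so each $H_i$ is a connected induced subgraph of $G$.

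To finish, I would verify that for distinct $i,i' \in I$ there is no edge of $G$ between $V_{H_i}$ and $V_{H_{i'}}$, splitting into two cases. If $j(i) = j(i') = j$, then $V_{H_i}$ and $V_{H_{i'}}$ are vertex sets of two distinct connected components of $G[V_{H'_j} \setminus \cut_\Delta(H'_j,c_j;\sigma(H'_j))]$, so no edge of $G$ joins them. If $j(i) \neq j(i')$, then by the inductive hypothesis there is no edge of $G$ between $V_{H'_{j(i)}}$ and $V_{H'_{j(i')}}$, and since $V_{H_i} \subseteq V_{H'_{j(i)}}$ and $V_{H_{i'}} \subseteq V_{H'_{j(i')}}$, the same holds for the smaller sets. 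In either case $V_{H_i}$ and $V_{H_{i'}}$ sit in different connected components of $G\left[\bigcup_i V_{H_i}\right]$, completing the induction. There is no real obstacle here: the statement is essentially an unwinding of the recursive definition of $\cT(\sigma)$ together with the observation that chopping an induced subgraph $H = G[V_H]$ removes a vertex set whose removal in $G$ has exactly the same effect on $V_H$.
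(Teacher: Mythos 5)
Your proof is correct. The paper states this lemma without proof, presenting it as an immediate consequence of the recursive construction of $\cT(\sigma)$, and your induction on the depth $k$ is exactly the natural formalization of that observation (the only detail left implicit in your write-up, disjointness of the $V_{H_i}$, follows from the same two-case analysis you already give).
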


We now state the main technical lemma on chopping trees.  The proof appears in \pref{sec:warm}.

\begin{lemma}\label{lem:spaced} Consider any $h \geq 1$ and $\Delta > 0$. Assume the following conditions hold: \begin{enumerate} \item $\max_{v \in V_G} \omega(v) \leq \Delta$ \item $\cT$ is a $\Delta$-chopping tree of $(G,\omega)$. \item There exists a $21h\Delta$-spaced node of $\cT$ at depth $h-1$. \end{enumerate} Then $G$ contains a careful $K_h$ minor. \end{lemma}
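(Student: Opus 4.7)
The plan is to adapt the Klein--Plotkin--Rao minor-construction \cite{KPR93,FT03} to our setting, with two twists: the construction proceeds along a nested sequence of connected components in $\cT$ rather than in the whole graph $G$, and we harvest a \emph{careful} $K_h$ minor (equivalently, by \pref{lem:careful-struct}, a strict $\dot K_h$ minor) rather than merely a $K_h$ minor. The fat-sphere structure---whose thickness $\omega(v)/2$ at a vertex $v$ precisely matches the incident edge length $\tfrac12(\omega(u)+\omega(v))$---will supply the connector vertices with the required neighborhood restrictions.

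First, I would extract the root-to-node path $\lambda_0,\lambda_1,\ldots,\lambda_{h-1}$ in $\cT$ ending at the assumed $21h\Delta$-spaced node, with $\lambda_k=(H_k,c_k,k)$ and $V_G\supseteq V_{H_0}\supseteq V_{H_1}\supseteq\cdots\supseteq V_{H_{h-1}}$. A key initial observation is that \emph{all} centers are pairwise $21h\Delta$-separated in $\dist_\omega^G$: since $c_{h-1}\in V_{H_k}$ for every $k$, the argmax rule \eqref{eq:argmax} gives $\dist_\omega^G(c_k,\{c_0,\ldots,c_{k-1}\})\geq \dist_\omega^G(c_{h-1},\{c_0,\ldots,c_{k-1}\})\geq 21h\Delta$. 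For each pair $i<j$ I would then fix a $\dist_\omega^{H_i}$-shortest path $P_{ij}\subseteq V_{H_i}$ from $c_i$ to $c_j$; by \pref{fact:unique}, $P_{ij}$ meets each fat shell $\vvS_\omega^{H_i}(c_i,R)$ in at most one vertex, and since its length exceeds $21h\Delta$, it traverses on the order of $21h$ consecutive shells of the level-$i$ chopping.

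Next, from this spacing budget I would carve out $h-1$ disjoint radial windows of width $\Delta$ around each $c_k$---one per partner $\ell\neq k$---spaced so that windows around different centers cannot interact. For each pair $i<j$, the connector $w_{ij}$ would be chosen as the unique vertex of $P_{ij}$ lying on the level-$i$ fat shell inside the window assigned to $\ell=j$ around $c_i$. The branch set $B_k$ would be the connected subgraph obtained from the union of the segments of the paths $\{P_{k\ell}\}_\ell$ and $\{P_{\ell k}\}_\ell$ between $c_k$ and the respective connector windows, minus the connectors themselves; each $B_k$ contains $c_k$ by construction and is connected because all its pieces meet at $c_k$.

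The main obstacle, and the reason for the constant $21$, will be verifying the three conditions of \pref{lem:careful-struct}: (1) pairwise absence of $E_G(B_i,B_j)$-edges, (2) independence of $W=\{w_{ij}\}$, and (3) each $w_{ij}$ has $G$-neighbors reaching \emph{only} $B_i\cup B_j$ among the branch sets. The fat-sphere identity $v\in\vvS_\omega^{H_i}(c_i,R)\iff |\dist_\omega^{H_i}(c_i,v)-R|\leq \omega(v)/2$ implies that removing $\vvS_\omega^{H_i}(c_i,R)$ is a clean $V_{H_i}$-cut separating $\cB_\omega^{H_i}(c_i,R)$ from its exterior, so neighbors of $w_{ij}$ land strictly inside the shell or on one of the two adjacent sides. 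By arranging that windows allocated to different pairs are separated by at least one full $\Delta$-gap---and using the pairwise $21h\Delta$-separation of centers together with the assumption $\max_v\omega(v)\leq\Delta$ to rule out interactions between shells anchored at distinct centers---no two connectors are adjacent and no connector touches a foreign branch set. The carefulness then falls out of the shell separations, and \pref{lem:careful-struct} delivers the careful $K_h$ minor.
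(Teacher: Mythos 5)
There is a genuine gap in the construction of the branch sets. You connect each pair of centers $c_i,c_j$ directly by a shortest path $P_{ij}\subseteq V_{H_i}$, place the connector $w_{ij}$ on a fat shell in a small radial window near $c_i$, and then assign to $B_j$ the segment of $P_{ij}$ running from just past $w_{ij}$ all the way to $c_j$. That segment has length on the order of $21h\Delta$, and its interior is completely uncontrolled: the pairwise $21h\Delta$-separation of the \emph{centers} and the disjointness of the radial windows say nothing about where the middle of $P_{ij}$ wanders. Two such long segments belonging to different supernodes (e.g.\ the tails of $P_{\ell k}$ and $P_{\ell' k'}$ with $k\neq k'$) can intersect or be $G$-adjacent far from every center, destroying disjointness and property (1) of \pref{lem:careful-struct}; likewise a foreign path $P_{\ell' k'}$ with $\ell'\neq i$ can pass straight through the window around $c_i$ and sit next to $w_{ij}$, since the fat shells $\vvS^{H_i}_{\omega}(c_i,\cdot)$ only control the radial coordinate of paths emanating from $c_i$ itself (\pref{fact:unique} applies only to those). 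There is also a containment issue: $P_{0j}$ lives in $H_0=G$ and need not stay inside $V_{H_1}$, so the level-$1$ shells cannot be used to separate it from anything.

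The paper's proof avoids exactly this trap by never keeping the middles of the connecting paths in the small supernodes. It runs an induction from the deep spaced node up to the root: at stage $t+1$ the existing careful $K_t$ minor sits in an annulus of the chopping of $H_{t+1}$ around $c_{t+1}$ at some radius $r\geq 20h\Delta$; one routes shortest paths $\gamma_i$ from the current representatives $r_i^t$ (kept within $10t\Delta$ of $c_i$ by an invariant) inward to $c_{t+1}$, but the \emph{new} supernode is the entire skinny ball ${\cB}_{\omega}^{H_{t+1}}(c_{t+1},r-8\Delta)$, which absorbs all the uncontrollable interiors of the $\gamma_i$, while each old supernode $A_i$ is extended only by the $O(\Delta)$-length stub of $\gamma_i$ lying outside that ball, with $w_{i,t+1}$ and $r_i^{t+1}$ pinned on the shells at radii $r-8\Delta$ and $r-4\Delta$. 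All disjointness and non-adjacency checks then reduce to shell separations around the single center $c_{t+1}$ plus short-range triangle inequalities near the representatives. Your scheme cannot be repaired without this idea (one endpoint's supernode must swallow the whole interior region), so as written the proposal does not establish the lemma.
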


Finally, we have the following analysis of a random chopping tree.

\begin{lemma}\label{lem:random-chopping} For any $k \geq 1$, the following holds. Suppose that $\sigma : \Indc(G) \to [0,\Delta]$ is chosen uniformly at random. Let $\{H_i : i \in I\} \subseteq \Indc(G)$ denote the collection of induced subgraphs occurring in the depth-$k$ nodes of $\cT(\sigma)$. For any $v \in V_G$, \[ \Pr\left[\cB_{\omega}^G(v,R) \subseteq \bigcup_{i \in I} V_{H_i}\right] \geq 1 - 2k \frac{R}{\Delta}\,. \] \end{lemma}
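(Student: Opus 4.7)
The strategy is induction on $k$, tracking the event that the ball $\cB := \cB_\omega^G(v,R)$ stays inside the distinguished subgraph of $\cT(\sigma)$ that contains $v$. For each $j \in \{0,1,\ldots,k\}$, let $H^{(j)} \in \Indc(G)$ denote the depth-$j$ subgraph of $\cT(\sigma)$ whose vertex set contains $v$ (so $H^{(0)} = G$); these are random variables determined by $\sigma$. I will show
\[
\Pr\!\left[\cB \subseteq V_{H^{(k)}}\right] \geq 1 - \frac{2kR}{\Delta},
\]
which implies the lemma since in the good event $V_{H^{(k)}}$ is one of the $V_{H_i}$, $i \in I$.

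The crucial structural observation is that $\cB$ is connected in $G$, and, moreover, that whenever $\cB \subseteq V_H$ for some $H \in \Indc(G)$ with $v \in V_H$, the ``ball as seen from $H$'' coincides with $\cB$:
\[
\cB_\omega^H(v,R) = \cB.
\]
Indeed, along any shortest $v$-$u$ path $v_0 = v, v_1, \ldots, v_\ell = u$ in $G$ with $u \in \cB$, optimal substructure yields
\[
\dist_\omega^G(v,v_i) + \tfrac12 \omega(v_i) \leq \tfrac12 \omega(v_0) + \omega(v_1) + \cdots + \omega(v_{i-1}) + \omega(v_i),
\]
a quantity nondecreasing in $i$ whose value at $i=\ell$ equals $\dist_\omega^G(v,u) + \tfrac12 \omega(u) < R$. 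Hence every $v_i$ lies in $\cB$, which shows both that $\cB$ is connected and, whenever $\cB \subseteq V_H$, that the shortest $v$-$u$ path stays in $V_H$; this forces $\dist_\omega^H(v,u) = \dist_\omega^G(v,u)$ and in turn $\cB_\omega^H(v,R) \supseteq \cB$, while the reverse inclusion is automatic.

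With this in hand, the inductive step is immediate. Condition on the event $\cB \subseteq V_{H^{(j-1)}}$, which by induction has probability at least $1 - 2(j-1)R/\Delta$. After this conditioning $\sigma(H^{(j-1)})$ remains uniform on $[0,\Delta]$, because the coordinates of $\sigma$ are independent and the conditioning event constrains only the $\sigma$-values at strict ancestors of $H^{(j-1)}$ in $\cT(\sigma)$. By the structural observation, $\cB = \cB_\omega^{H^{(j-1)}}(v,R)$, so \pref{lem:random-chop} applied in $H^{(j-1)}$ gives
\[
\Pr\!\left[\cB \cap \cut_\Delta(H^{(j-1)}, c^{(j-1)}; \sigma(H^{(j-1)})) = \emptyset\right] \geq 1 - \frac{2R}{\Delta}.
\]
On this event the connected set $\cB$ avoids the cut, so it lies in a single connected component of the post-cut graph, and that component must be the one containing $v$, namely $H^{(j)}$. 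A union bound over $j = 1, \ldots, k$ closes the induction.

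The one delicate point is the identification $\cB_\omega^H(v,R) = \cB$: without it, the ball protected by \pref{lem:random-chop} is an object that shifts with $H$, and the union bound over levels cannot be assembled around the fixed target $\cB$. The shortest-path-stays-in-the-skinny-ball observation supplies exactly what is needed; the rest of the proof is bookkeeping.
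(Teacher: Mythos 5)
Your proof is correct and follows essentially the same route as the paper's: reduce to containment in a single component via connectedness of the skinny ball, apply Lemma~\ref{lem:random-chop} once per level, and use the identity $\cB_{\omega}^{H}(v,R)=\cB_{\omega}^{G}(v,R)$ whenever $\cB_{\omega}^{G}(v,R)\subseteq V_H$ to make the per-level bound apply to the fixed target set. The only difference is that you supply a proof of that last identity (via the monotone shortest-path estimate), which the paper merely asserts.
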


\begin{proof} Note that since $\cB_{\omega}^G(v,R)$ is a connected set and
   there are no edges between $H_i$ and $H_j$ for $i \neq j$ (cf. \pref{lem:alevel}),
we have \[ \cB_{\omega}^G(v,R) \subseteq \bigcup_{i \in I} V_{H_i} \iff \exists i \in I \textrm{ s.t. } \cB_{\omega}^G(v,R) \subseteq V_{H_i}\,. \] The set $\cB_{\omega}^G(v,R)$ experiences at most $k$ random chops, and the probability it gets removed in any one of them is bounded in \pref{lem:random-chop}.  The desired result follows by observing that if $H \in \Indc(G)$ satisfies $\cB_{\omega}^G(v,R) \subseteq V_H$, then $\cB_{\omega}^H(v,R)=\cB_{\omega}^G(v,R)$. \end{proof}

\subsection{The random separator construction}

We require an additional tool before proving \pref{thm:random-sep}. For nodes that are not well-spaced, we need to apply one further operation.

If $H \in \Indc(G)$ and $\vec c = (c_1, c_2, \ldots, c_k) \in V_G^k$ and $\vec{\tau} = (\tau_1, \tau_2, \ldots, \tau_k) \in \R_+^k$, we define a subset $\shatter_{\Delta}(H,\vec{c},\vec{\tau}) \subseteq \Indc(G)$ as follows. Define \[ \shards_{\Delta}(H,\vec{c},\vec{\tau}) = V_H \cap \bigcup_{i=1}^k \vvS_{\omega}^G(c_i, \Delta+\tau_i)\,, \] and let $\shatter_{\Delta}(H,\vec{c},\vec{\tau})$ be the collection of connected components of $H[V_H \setminus \shards(H,\vec{c},\vec{\tau})]$. The next two lemmas are straightforward consequences of this construction.

\begin{lemma}\label{lem:diambound} If every $v \in V_H$ satisfies $\min \{ \dist_{\omega}^G(v,c_i) : i=1,\ldots,k\} \leq \Delta$,
then for every $H' \in \shatter_{\Delta}(H,\vec{c},\vec{\tau})$, it holds that \[\diam_{\omega}^G(V_{H'}) \leq 2(\Delta + \max \vec{\tau})\,.\] \end{lemma}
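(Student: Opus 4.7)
The plan is to assign, to each vertex in $V_H \setminus \shards_{\Delta}(H,\vec{c},\vec{\tau})$, an index $i \in \{1,\ldots,k\}$ such that the vertex lies strictly inside the ``skinny ball'' $\cB_{\omega}^G(c_i, \Delta+\tau_i)$, then to show that this index assignment is constant on each connected component $H'$ of the shatter, and finally to invoke the triangle inequality.

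First, I would show that every $v \in V_H \setminus \shards$ satisfies $v \in \cB_{\omega}^G(c_i, \Delta+\tau_i)$ for some $i$. By hypothesis, there is an index $i^*$ with $\dist_{\omega}^G(v, c_{i^*}) \leq \Delta$. Since $\tau_{i^*} \geq 0$ and $\omega(v) \geq 0$, we have $\dist_{\omega}^G(v, c_{i^*}) \leq \Delta + \tau_{i^*} + \tfrac12 \omega(v)$, which rules out $v$ being ``outside'' the fat sphere $\vvS_{\omega}^G(c_{i^*}, \Delta+\tau_{i^*})$. Because $v \notin \shards$ also means $v \notin \vvS_{\omega}^G(c_{i^*}, \Delta+\tau_{i^*})$, the only remaining option is the strict inequality $\dist_{\omega}^G(v,c_{i^*}) < \Delta + \tau_{i^*} - \tfrac12 \omega(v)$, i.e., $v \in \cB_{\omega}^G(c_{i^*}, \Delta+\tau_{i^*})$.

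Next, I would argue that on a connected component $H' \in \shatter_{\Delta}(H,\vec{c},\vec{\tau})$, a single index $i$ works for every vertex. Suppose $u \in V_{H'} \cap \cB_{\omega}^G(c_i, \Delta+\tau_i)$ and $\{u,u'\} \in E_{H'}$. Then $\dist_{\omega}^G(u,u') = \tfrac{\omega(u)+\omega(u')}{2}$, so by the triangle inequality,
\[
\dist_{\omega}^G(u', c_i) < \Delta + \tau_i - \tfrac12\omega(u) + \tfrac{\omega(u)+\omega(u')}{2} = \Delta + \tau_i + \tfrac12 \omega(u').
\]
Since $u' \notin \vvS_{\omega}^G(c_i, \Delta+\tau_i)$, this forces $\dist_{\omega}^G(u',c_i) < \Delta + \tau_i - \tfrac12 \omega(u')$, i.e., $u' \in \cB_{\omega}^G(c_i, \Delta+\tau_i)$ as well. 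Propagating this along any path in the connected subgraph $H'$ shows that all of $V_{H'}$ lies in a single $\cB_{\omega}^G(c_i, \Delta+\tau_i)$.

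Finally, for any $u, v \in V_{H'}$, the triangle inequality gives $\dist_{\omega}^G(u,v) \leq \dist_{\omega}^G(u,c_i) + \dist_{\omega}^G(v,c_i) < 2(\Delta+\tau_i) \leq 2(\Delta + \max \vec{\tau})$, yielding the diameter bound. The main subtlety is the propagation step: one must exploit that the shards consist precisely of the fat spheres (which are the transition zone between ``strictly inside'' and ``strictly outside'' a ball) to rule out an edge in $H[V_H \setminus \shards]$ crossing from one skinny ball to the exterior of that same ball, and this is exactly where the half-weight slack $\tfrac12 \omega(\cdot)$ baked into the definitions of $\cB_{\omega}^G$ and $\vvS_{\omega}^G$ does the work.
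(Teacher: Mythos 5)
Your proof is correct, and since the paper omits the proof of this lemma as a ``straightforward consequence of the construction,'' your argument supplies exactly the intended details: removing the fat spheres $\vvS_{\omega}^G(c_i,\Delta+\tau_i)$ disconnects each skinny ball $\cB_{\omega}^G(c_i,\Delta+\tau_i)$ from its exterior, so each component of the shatter lies entirely inside one skinny ball and has $\dist_{\omega}^G$-diameter less than $2(\Delta+\tau_i)$. The only cosmetic nit is that $\dist_{\omega}^G(u,u')$ for an edge $\{u,u'\}$ is $\leq \frac{\omega(u)+\omega(u')}{2}$ rather than equal to it, but your triangle-inequality step only needs the upper bound.
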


\begin{lemma}\label{lem:shard-prob} For any $\vec{c} \in V_G^k$ and any $\Delta' > 0$, if $\vec{\tau} \in [0,\Delta']^k$ is chosen uniformly at random, then for any $v \in V_H$ and $R \geq 0$, \[ \Pr[\cB_{\omega}^H(v,R) \cap \shards_{\Delta}(H,\vec{c},\vec{\tau}) = \emptyset] \geq 1 - 2 k \frac{R}{\Delta'}\,. \] \end{lemma}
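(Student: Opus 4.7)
\textbf{Proof plan for Lemma~\ref{lem:shard-prob}.} The plan is to reduce to a single center by a union bound and, for each $i$, to show that the set of ``bad'' values of $\tau_i$ (those that cause $\cB_\omega^H(v,R)$ to intersect $\vvS_\omega^G(c_i,\Delta+\tau_i)$) is contained in an interval of length at most $2R$. Since $\tau_i$ is uniform on $[0,\Delta']$, this gives probability at most $2R/\Delta'$ for each $i$, and the lemma follows from summing over $i=1,\dots,k$. This mirrors exactly the structure of \pref{lem:random-chop}, with $G$ in place of $H$ on the sphere side.

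Fix $i \in \{1,\dots,k\}$ and set $g_i(u) \seteq \dist_\omega^G(c_i,u)$ for $u \in V_H \subseteq V_G$. By definition of the fat sphere, $u \in \vvS_\omega^G(c_i, \Delta+\tau_i)$ if and only if
\[
\Delta + \tau_i \;\in\; \bigl[g_i(u) - \tfrac12\omega(u),\; g_i(u) + \tfrac12\omega(u)\bigr]\mper
\]
Hence the set of values $t \in \R$ for which $\cB_\omega^H(v,R) \cap \vvS_\omega^G(c_i,t) \neq \emptyset$ is exactly $I_i \seteq \bigcup_{u \in \cB_\omega^H(v,R)} [g_i(u)-\tfrac12\omega(u),\,g_i(u)+\tfrac12\omega(u)]$.

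The main computation is that $I_i$ is contained in an interval of length less than $2R$. Let $u^+, u^- \in \cB_\omega^H(v,R)$ achieve, respectively, the maximum of $g_i(u)+\tfrac12\omega(u)$ and the minimum of $g_i(u)-\tfrac12\omega(u)$ over the skinny ball. Because $H$ is an induced subgraph of $G$, every $H$-path is a $G$-path, giving $\dist_\omega^G \le \dist_\omega^H$ on $V_H \times V_H$; combined with the $g_i$-triangle inequality this yields
\[
g_i(u^+) - g_i(u^-) \;\le\; \dist_\omega^G(u^+,u^-) \;\le\; \dist_\omega^H(u^+,u^-)\mper
\]
Using $u^\pm \in \cB_\omega^H(v,R)$, i.e., $\dist_\omega^H(v,u^\pm) < R - \tfrac12 \omega(u^\pm)$, the triangle inequality in $H$ gives $\dist_\omega^H(u^+,u^-) < 2R - \tfrac12(\omega(u^+)+\omega(u^-))$, so
\[
\bigl(g_i(u^+)+\tfrac12\omega(u^+)\bigr) - \bigl(g_i(u^-)-\tfrac12\omega(u^-)\bigr) \;<\; 2R\mper
\]
Thus $I_i$ is contained in an interval of length strictly less than $2R$.

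Finally, $\cB_\omega^H(v,R) \cap \vvS_\omega^G(c_i,\Delta+\tau_i) \neq \emptyset$ precisely when $\Delta+\tau_i \in I_i$. Since $\tau_i$ is uniform on $[0,\Delta']$, this event has probability at most $|I_i|/\Delta' \le 2R/\Delta'$. A union bound over $i=1,\dots,k$ bounds the probability that $\cB_\omega^H(v,R)$ meets $\shards_\Delta(H,\vec c,\vec\tau)$ by $2kR/\Delta'$, completing the proof. No step is difficult; the only subtle point is remembering that the sphere is measured in $\dist_\omega^G$ while the ball is measured in $\dist_\omega^H$, and that the inequality $\dist_\omega^G \le \dist_\omega^H$ runs in the right direction to make the key length bound go through.
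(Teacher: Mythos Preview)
Your proof is correct and is exactly the natural argument the paper has in mind; the paper itself does not prove \pref{lem:shard-prob}, declaring it (together with \pref{lem:diambound}) a ``straightforward consequence of this construction.'' Your identification of the bad set of offsets as a subset of an interval of length less than $2R$, using the $1$-Lipschitz property of $u \mapsto \dist_\omega^G(c_i,u)$ together with $\dist_\omega^G \le \dist_\omega^H$ and the skinny-ball inequality $\dist_\omega^H(v,u) < R - \tfrac12\omega(u)$, is the intended computation, and the union bound over the $k$ centers finishes it.
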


\begin{proof}[Proof of \pref{thm:random-sep}] We may assume that if $v \in V_G$ has $\omega(v) > \Delta$, then $v \in \bm{S}$. Indeed, denote
\[
      Q = \{ v \in V_G : \omega(v) > \Delta\}\,.
\]
      If we can produce an $(\alpha,\Delta)$-separator for each of the connected components of $G[V_G \setminus Q]$, then taking the union of those separators together with $Q$ yields a $(2\alpha,\Delta)$-separator of $G$. We may therefore assume that $\max_{v \in V_G} \omega(v) \leq \Delta$.

Assume now that $G$ excludes a careful $K_h$ minor. Let $\cT=\cT(\sigma)$ be the $\Delta$-chopping tree of $(G,\omega)$ with $\sigma : \Indc(G) \to [0,\Delta]$ chosen uniformly at random.

Let $D_{h-1} = \{\lambda_i = (H_i, c_i, h) : i \in I\}$ be the collection of depth-$(h-1)$ nodes of $\cT(\sigma)$. Let $\bm{S}_1 = V_G \setminus \bigcup_{i \in I} V_{H_i}$.  By construction, the graphs $\{H_i\}$ are precisely the connected components of $G[V\setminus \bm{S}_1]$ (and they occur without repetition, i.e., $H_i \neq H_j$ for $i \neq j$).

By \pref{lem:random-chopping}, for any $v \in V_G$ and $R \geq 0$, we have \begin{equation}\label{eq:prob1} \Pr[\cB_{\omega}^G(v, R) \cap \bm{S}_1=\emptyset ] \geq 1 - 2h \frac{R}{\Delta}\,. \end{equation} Define \begin{equation*} \bm{S}_2 = \bigcup_{i \in I} \shards_{21h\Delta}(H_i, \vec{c}_{\cT}(\lambda_i); \vec{\tau})\,, \end{equation*} where $\vec{\tau} \in [0,\Delta]^h$ is chosen uniformly at random.  From \pref{lem:shard-prob}, for any $i \in I$, $v \in V_{H_i}$, and $R \geq 0$, we have
 \begin{equation}\label{eq:prob2}
\Pr[\cB_{\omega}^{H_i}(v, R) \cap \bm{S}_2=\emptyset] \geq 1- 2h\frac{R}{\Delta}\,. \end{equation}

So consider $v \in V_G$ and $R \geq 0$.  If $\cB_{\omega}^G(v,R) \cap \bm{S}_1 = \emptyset$, then $\cB_{\omega}^G(v,R) \subseteq V_{H_i}$ for some $i \in I$, and in that case $\cB_{\omega}^{G}(v,R)=\cB_{\omega}^{H_i}(v,R)$.  Therefore \eqref{eq:prob1} and \eqref{eq:prob2} together yield \begin{equation}\label{eq:prob3} \Pr[\cB_{\omega}^G(v,R) \cap (\bm{S}_1 \cup \bm{S}_2) = \emptyset] \geq 1 - 4h \frac{R}{\Delta}\,. \end{equation}

Moreover, the collection of induced subgraphs \[ \cH = \bigcup_{i \in I} \shatter_{21h\Delta}(H_i, \vec{c}_{\cT}(\lambda_i); \vec{\tau}) \] is precisely the set of connected components of $G[V \setminus (\bm{S}_1 \cup \bm{S}_2)]$.

We are thus left to bound $\diam_{\omega}^G(V_H)$ for every $H \in \cH$. Consider a node $\lambda_i \in D_{h-1}$. Since $G$ excludes a careful $K_h$ minor, \pref{lem:spaced} implies that $\lambda_i$ is not $21h\Delta$-spaced. It follows that \[ \max_{v \in V_{H_i}} \dist_{\omega}^G(v, \vec{c}_{\cT}(\lambda_i)) \leq \dist_{\omega}^G(c_i, \vec{c}_{\cT}(\lambda_i)) \leq 21 h \Delta\,, \] based on how $c_i$ is chosen in \eqref{eq:argmax}.  Therefore \pref{lem:diambound} implies that for every $H \in \shatter_{21 h\Delta}(H_i, \vec{c}_{\cT}(\lambda_i);\vec{\tau})$, we have $\diam_{\omega}^G(V_{H}) \leq 2(21 h+1) \Delta$.

We conclude that every connected component $H$ of $G[V \setminus (\bm{S}_1\cup \bm{S}_2)]$ has $\diam_{\omega}^G(V_H) \leq (42h+2)\Delta$. Combining this with \eqref{eq:prob3} shows that $\bm{S}_1 \cup \bm{S}_2$ is a $\left(4h(42h+2),(42h+2)\Delta\right)$-random separator, yielding the desired conclusion.
(Note that establishing the existence of a $(c\alpha, c\Delta)$-random separator for every $\Delta > 0$ implies
the existence of an $(\alpha,\Delta)$-random separator for every $\Delta > 0$ by homogeneity.)
\end{proof}

\subsection{A diameter bound for well-spaced subgraphs} \label{sec:warm}

Our goal now is to prove \pref{lem:spaced}.

\begin{lemma}[Restatement of \pref{lem:spaced}]\label{lem:spaced-copy} Consider any $h \geq 1$ and $\Delta > 0$. Assume the following conditions hold: \begin{enumerate} \item[(A1)] $\max_{v \in V_G} \omega(v) \leq \Delta$ \item[(A2)] $\cT$ is a $\Delta$-chopping tree of $(G,\omega)$. \item[(A3)] There is a $21 h\Delta$-spaced node of $\cT$ at depth $h-1$. \end{enumerate} Then $G$ contains a careful $K_h$ minor. \end{lemma}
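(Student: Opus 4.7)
The plan is to use (A3) to extract a chain of $h$ centers $c_0, c_1, \ldots, c_{h-1}$ that are pairwise at $\dist_\omega^G$-distance $\geq 21h\Delta$, and then to build the supernodes $\{B_i\}$ and connectors $\{w_{ij}\}$ demanded by \pref{lem:careful-struct} out of a family of $\dist_\omega^G$-geodesics joining these centers. Assumption (A1) will make every edge of $\omega$-length at most $\Delta$, so each such geodesic contains many vertices, while assumption (A2) provides the nested subgraph structure that is used to arrange disjointness of the various arms of the construction.

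First I would set up the centers. Let $\lambda^* = (H_{h-1}, c_{h-1}, h-1)$ be the $21h\Delta$-spaced node guaranteed by (A3), and let $c_0, c_1, \ldots, c_{h-1}$ be its ancestral chain of centers, with corresponding nested subgraphs $V_G = V_{H_0} \supseteq V_{H_1} \supseteq \cdots \supseteq V_{H_{h-1}}$. Because $c_{h-1} \in V_{H_i}$ for every $i \leq h-1$, the argmax rule \eqref{eq:argmax} selecting each $c_i$ gives
\[
\dist_\omega^G(c_i, \{c_0,\ldots,c_{i-1}\}) \;\geq\; \dist_\omega^G(c_{h-1}, \{c_0,\ldots,c_{i-1}\}) \;\geq\; \dist_\omega^G(c_{h-1}, \{c_0,\ldots,c_{h-2}\}) \;\geq\; 21h\Delta,
\]
so $\dist_\omega^G(c_i, c_j) \geq 21h\Delta$ for every $0 \leq i < j \leq h-1$.

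Next I would construct the minor. For each pair $\{i,j\}$ with $i < j$, I would fix a $\dist_\omega^G$-geodesic $\gamma_{ij}$ from $c_i$ to $c_j$ in $G$; by (A1) each edge has $\omega$-length at most $\Delta$, so $\gamma_{ij}$ has at least $21h$ edges. I would choose the connector $w_{ij}$ to be a carefully selected interior vertex of $\gamma_{ij}$ splitting it into two subpaths of $\omega$-length $\gtrsim 10h\Delta$ each, call those subpaths $\alpha_{ij}$ (the $c_i$-side) and $\alpha_{ji}$ (the $c_j$-side), and finally set $B_i := \bigcup_{j \neq i} \alpha_{ij}$. Each $B_i$ is connected since all its constituent subpaths share $c_i$.

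The main obstacle is then the verification of the three conditions in \pref{lem:careful-struct}: pairwise non-adjacency $E_G(B_i,B_j) = \emptyset$, independence of the connector set $W$, and the precise adjacency pattern \eqref{eq:careful-struct-prop}. The guiding principle is that any failure exhibits an alternative walk of $\omega$-length strictly below $21h\Delta$ between two centers $c_a,c_b$, contradicting Step 1; the large factor $21h$ is calibrated precisely to absorb all $O(h)$ such potential shortcuts. Within a single geodesic $\gamma_{ij}$, no chord edge can exist because $\omega > 0$ and a chord would strictly shorten the path. The genuinely hard subcase is across two geodesics $\gamma_{ij}$ and $\gamma_{ik}$ emanating from a common center $c_i$: shortest paths to distinct targets can share a long prefix, threatening both disjointness of $\alpha_{ij}$ and $\alpha_{ik}$ and isolation of the connectors. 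This is where (A2) enters crucially—by selecting all geodesics leaving each $c_i$ consistently from a single shortest-path tree rooted at $c_i$ (adapted to the nested chain $\{H_\ell\}$, whose annular chops force the geodesics toward different targets to leave common subgraphs through different fat-sphere cuts) and then trimming to the safety buffer of $\omega$-length $\gtrsim 10h\Delta$, one can force the trimmed arms to be disjoint and the connectors to be isolated. With the three conditions verified, \pref{lem:careful-struct} produces the required careful $K_h$ minor in $G$.
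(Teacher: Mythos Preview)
Your setup in Step~1 (extracting the chain of $h$ pairwise $21h\Delta$-separated centers) is correct and matches the paper. But the construction in Step~2 has a genuine gap, and the vague invocation of (A2) at the end does not close it.

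The problem is that you take all $\binom{h}{2}$ geodesics $\gamma_{ij}$ \emph{in the ambient graph $G$}. Nothing prevents a geodesic $\gamma_{ij}$ from being much longer than $21h\Delta$ and from wandering arbitrarily close to a third center $c_k$ (or to another geodesic $\gamma_{k\ell}$ with $\{k,\ell\}$ disjoint from $\{i,j\}$). Your triangle-inequality accounting can rule out unwanted adjacencies only when every point of $\alpha_{ij}$ is within $O(h\Delta)$ of $c_i$; but if you place $w_{ij}$ at the midpoint this can fail badly, and if you place $w_{ij}$ at distance $\approx 10h\Delta$ from $c_i$ then $w_{ij}$ need not be adjacent to $B_j$. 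The ``shortest-path tree rooted at $c_i$'' idea only addresses geodesics sharing an endpoint; it says nothing about $\gamma_{ij}$ versus $\gamma_{k\ell}$ with all four indices distinct, and the nested chain $\{H_\ell\}$ does not by itself force such geodesics apart.

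The paper's proof avoids this by a different architecture: it builds the minor \emph{inductively in $t$}, and at stage $t$ it takes geodesics \emph{in the subgraph $H_{t+1}$} (not in $G$) from the current representatives $r_i^t$ to the single new center $c_{t+1}$. Because $H_t$ arises from $H_{t+1}$ by chopping at a fat sphere of radius $r$ about $c_{t+1}$, each such geodesic spends at most $\Delta$ of its length inside $H_t$ (this is exactly where (A2) is used). The new supernodes and connectors are then carved out at fixed radii $r-4\Delta$ and $r-8\Delta$ from $c_{t+1}$, confining everything new to a thin annulus; disjointness follows from property~(P5), which is maintained inductively. The key structural point you are missing is that at each stage all new paths go to a \emph{common} target $c_{t+1}$ and are short in $H_{t+1}$, so their pairwise separation is inherited from the separation of their starting points $r_i^t$. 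Your all-pairs-at-once scheme has no analogue of this control.
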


In order to enforce the properties of a careful minor, we will need a way to ensure that there are no edges between certain vertices.  The following simple fact will be the primary mechanism.

\begin{lemma}\label{lem:noedge} Suppose $H \in \Indc(G)$ and $\max_{v \in V_H} \omega(v) \leq \Delta$. If $u,v \in V_H$ satisfy $\dist_{\omega}^H(u,v) > \Delta$, then $\{u,v\} \notin E_G$. \end{lemma}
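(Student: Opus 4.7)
The plan is to prove the contrapositive: assume $\{u,v\} \in E_G$ and deduce $\dist_{\omega}^H(u,v) \leq \Delta$. The key observation is that because $H \in \Indc(G)$ is an \emph{induced} subgraph and both $u,v$ lie in $V_H$, the edge $\{u,v\}$ also belongs to $E_H$. Hence a single-edge path from $u$ to $v$ is available inside $H$ for computing $\dist_{\omega}^H(u,v)$.

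By the definition of the conformal path metric, this single edge contributes length $\frac{\omega(u)+\omega(v)}{2}$, so
\[
\dist_{\omega}^H(u,v) \leq \frac{\omega(u)+\omega(v)}{2}\,.
\]
Applying assumption (A1), that is $\max_{v \in V_H} \omega(v) \leq \Delta$, to both $\omega(u)$ and $\omega(v)$ yields $\dist_{\omega}^H(u,v) \leq \Delta$, which contradicts the hypothesis $\dist_{\omega}^H(u,v) > \Delta$. Therefore $\{u,v\} \notin E_G$, as desired.

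There is essentially no obstacle here: the only subtlety to flag is that the bound $\dist_{\omega}^H \leq \frac{\omega(u)+\omega(v)}{2}$ requires the edge $\{u,v\}$ to actually lie in $E_H$, which is where we use that $H$ is an \emph{induced} subgraph (rather than an arbitrary subgraph of $G$); otherwise the shortest $u$-$v$ path inside $H$ might be forced to route through other vertices and no such one-step bound would hold.
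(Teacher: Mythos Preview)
Your proof is correct and follows essentially the same argument as the paper: use that $H$ is an induced subgraph to conclude $\{u,v\}\in E_H$, then bound $\dist_{\omega}^H(u,v)$ by the single-edge length $\tfrac{\omega(u)+\omega(v)}{2}\leq \Delta$. The only cosmetic difference is that the paper writes equality $\dist_{\omega}^H(u,v)=\tfrac{\omega(u)+\omega(v)}{2}$ (which indeed holds), whereas you use the inequality $\leq$, which is all that is needed.
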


\begin{proof} Since $H$ is an induced subgraph, $\{u,v\} \in E_G \implies \{u,v\} \in E_H$. And then clearly $\dist_{\omega}^H(u,v) = \frac{\omega(u)+\omega(v)}{2} \leq \Delta\,.$ \end{proof}

\begin{proof}[Proof of \pref{lem:spaced-copy}] We will construct a careful $K_h$ minor inductively. Recall that a careful $K_h$ minor is a strict minor of the subdivision $\dot{K}_h$. We use the notation $\{A_u\}$ for the supernodes corresponding to original vertices of $K_h$. The supernodes corresponding to subdivision vertices will be single nodes of $V_G$ which we denote $\{ w_{uv} : \{u,v\} \in E_{K_h}\}$.

Let $\lambda=(H,c,h-1)$ be a $21h\Delta$-spaced node in $\cT$, and denote by $\lambda=\lambda_1, \lambda_2, \ldots, \lambda_{h}$ the
sequence of nodes of $\cT$ on the path from $\lambda$ to the root of $\cT$.
For each $t=1,\ldots,h$, write $\lambda_t=(H_t,c_t,h-t)$.

Observe that since $\lambda$ is $21 h \Delta$-spaced, it also holds that $\lambda_t$ is $21 h\Delta$-spaced for each $t=1,\ldots,h$.  (The property only becomes stronger for children in $\cT$.)  Hence, \begin{equation}\label{eq:center-sep} \dist_{\omega}^G(c_i, c_j) \geq 21h\Delta \textrm{ for all $i,j \in \{1,\ldots,h\}$ with $i\neq j$}. \end{equation}
We will show, by induction on $t$, that $H_t$ contains a careful $K_{t}$ minor for $t=1,\ldots,h$.

For the sake of the induction, we will need to maintain some additional properties that we now describe. The first three properties simply ensure that we have found a strict $\dot{K}_{t}$ minor. Let us use the numbers $\{1,2,\ldots,t\}$ to index the vertices of $V_{K_t}$. We will show there exist sets $\{A^t_u \subseteq V_{H_t} : u \in V_{K_t}\}$ and $W^t = \left\{ w_{uv} \in V_{H_t} : \{u,v\} \in E_{K_t}\right\} \subseteq V_{H_t}$ with the following properties: \begin{enumerate} \item[P1.] The sets $\{A^t_u : u \in V_{K_t}\}$ are connected and mutually disjoint, and $E_G(A^t_u,A^t_v)=\emptyset$ for $u \neq v$. \item[P2.] The set $W^t$ is an independent set in $G$. \item[P3.] For all $\{u,v\} \in E_{K_t}$ it holds that $E_G(w_{uv}, A^t_x) \neq \emptyset \iff x \in \{u,v\}$. \item[P4.] For every $u \in V_{K_t}$, there is a representative $r^t_u \in A^t_u$ such that $\dist_{\omega}^G(r^t_u, c_u) \leq 10 t \Delta$. \item[P5.] For all $u \neq v \in V_{K_t}$, we have $\dist_{\omega}^G(r^t_u,r^t_v) \geq 21(h-t) \Delta\,.$ \item[P6.] For every $u \in V_{K_t}$, it holds that \[ \dist_{\omega}^{H_t}\left(r^t_u, W^t \cup \bigcup_{v \neq u} A^t_v\right) \geq 3\Delta\,. \] \end{enumerate}

In the base case $t=1$, take $A_1^1 = \{c_1\}$ and $r^1_1=c_1$, and $W^1=\emptyset$. It is easily checked that these choices satisfy (P1)--(P6). So now suppose that for some $t \in \{1,2,\ldots,h-1\}$, we have objects satisfying (P1)--(P6). We will establish the existence of objects satisfying (P1)--(P6) for $t+1$.

\begin{figure} \begin{center} \makebox[18.5cm]{ \includegraphics[width=16cm]{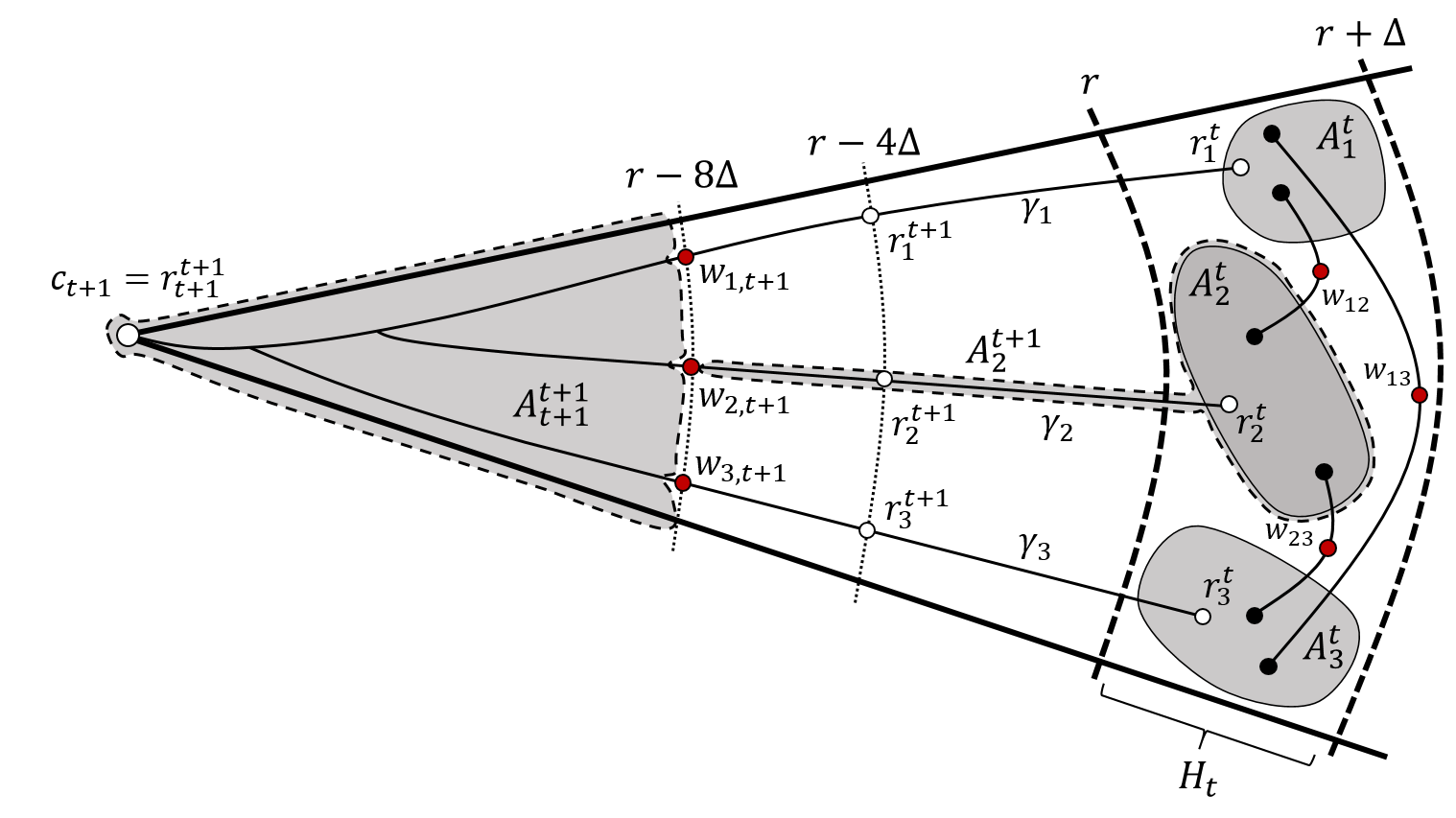}\hspace{3cm}} \vspace{-1cm} \caption{Construction of a careful $K_h$ minor.\label{fig:minor}} \end{center} \end{figure}

It may help to consult \pref{fig:minor} for the inductive step. Recall that, by construction, $H_t \in \chop_{\Delta}(H_{t+1}, c_{t+1}; r)$ for some $r > 0$.
Note that since $\lambda_t$ is $21 h\Delta$-spaced, it holds that
$\dist_{\omega}^G(c_t,c_{t+1}) \geq 21 h \Delta$ (cf. \eqref{eq:center-sep}), and therefore
(recalling that $\max_{v \in V_G} \omega(v) \leq \Delta$)
\begin{equation}\label{eq:rbound} r \geq 20h\Delta\,. \end{equation}

For each $i=1,2,\ldots,t$, let $\gamma_i$ denote a $\dist_{\omega}^{H_{t+1}}$-shortest-path from $r^t_i$ to $c_{t+1}$. Let $r_i^{t+1}$ denote the unique element in $\gamma_i \cap \vvS^{H_{t+1}}_{\omega}(c_{t+1}, r-4\Delta)$, and let $w_{i,t+1}$  denote the unique element in $\gamma_i \cap \vvS^{H_{t+1}}_{\omega}(c_{t+1},r-8\Delta)$. (Recall \pref{fact:unique}.) Define: \begin{align*} A_i^{t+1} &= A_i^t \cup \gamma_i \setminus \vvB_{\omega}^{H_{t+1}}(c_{t+1},r-8\Delta),\quad 1 \leq i \leq t \\ A_{t+1}^{t+1} &= {\cB}_{\omega}^{H_{t+1}}(c_{t+1}, r-8\Delta)\,. \end{align*}

\begin{lemma}\label{lem:far-prelim} For every $i,j \in \{1,2,\ldots, t\}$ with $i\neq j$, the following holds:  $\gamma_i \cap A_j^t = \emptyset$, $\gamma_i \cap W^t = \emptyset$, and $E_G(\gamma_i, A_j^t) = E_G(\gamma_i, W^t) = \emptyset$. \end{lemma}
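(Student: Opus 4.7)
The plan is to prove \pref{lem:far-prelim} by a geometric analysis of the shortest path $\gamma_i = (u_0, u_1, \ldots, u_m)$ from $r_i^t = u_0$ to $c_{t+1} = u_m$ in $H_{t+1}$, combining the inductive hypothesis (P6) with the annular structure of $H_t$ inside $H_{t+1}$. Two structural facts will drive the argument. First, because $\gamma_i$ is a $\dist_{\omega}^{H_{t+1}}$-shortest path, subpath optimality gives that the function $d(x) := \dist_{\omega}^{H_{t+1}}(c_{t+1}, x)$ is strictly decreasing along $\gamma_i$, with $d(u_k) - d(u_{k+1}) = \tfrac{1}{2}(\omega(u_k) + \omega(u_{k+1}))$. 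Second, since $H_t$ is a connected component of $H_{t+1} \setminus C$ where $C = \cut_{\Delta}(H_{t+1}, c_{t+1}; \sigma(H_{t+1}))$, there exist consecutive cut-radii $a < b$ with $b - a = \Delta$ such that every $v \in V_{H_t}$ satisfies $d(v) \in (a + \omega(v)/2,\, b - \omega(v)/2) \subseteq (a,b)$, so $V_{H_t}$ lies in a distance annulus of width $\Delta$ around $c_{t+1}$.

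For the intersection claim $\gamma_i \cap (A_j^t \cup W^t) = \emptyset$, I will show that any $u_k \in \gamma_i \cap V_{H_t}$ satisfies $\dist_{\omega}^{H_t}(r_i^t, u_k) < \Delta$. The $H_{t+1}$-length of the subpath of $\gamma_i$ from $r_i^t$ to $u_k$ equals $d(r_i^t) - d(u_k) < b - a = \Delta$. A careful analysis of how $C$ intersects the distance range $(a,b)$, where $C$ consists only of parts of the two boundary fat spheres $\vvS_{\omega}^{H_{t+1}}(c_{t+1}, a)$ and $\vvS_{\omega}^{H_{t+1}}(c_{t+1}, b)$, shows that this subpath remains inside the induced subgraph $H_t$, so its $H_t$-length equals its $H_{t+1}$-length. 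Combined with (P6), which gives $\dist_{\omega}^{H_t}(r_i^t, x) \geq 3\Delta$ for every $x \in A_j^t \cup W^t$ when $j \neq i$, this rules out any $u_k \in A_j^t \cup W^t$ with $u_k \in V_{H_t}$. For indices $k$ with $u_k \notin V_{H_t}$, the inclusion $A_j^t \cup W^t \subseteq V_{H_t}$ closes the argument.

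For the edge claim $E_G(\gamma_i, A_j^t \cup W^t) = \emptyset$ I will apply \pref{lem:noedge}. If $u \in \gamma_i \cap V_{H_t}$ and $v \in A_j^t \cup W^t$, the triangle inequality in $H_t$ combining $\dist_{\omega}^{H_t}(r_i^t, u) < \Delta$ with (P6) yields $\dist_{\omega}^{H_t}(u, v) > 2\Delta > \Delta$, so because $\max_{V_{H_t}} \omega \leq \Delta$, \pref{lem:noedge} applied in $H_t$ rules out $\{u,v\} \in E_G$. The harder case is $u \in \gamma_i \setminus V_{H_t}$: if $\{u,v\} \in E_G$ for some $v \in A_j^t \cup W^t \subseteq V_{H_t}$, then induced-ness of $H_{t+1}$ places $\{u,v\} \in E_{H_{t+1}}$, and since $H_t$ is a component of $H_{t+1} \setminus C$, we must have $u \in C$; the edge constraint $|d(u) - d(v)| \leq \tfrac{1}{2}(\omega(u) + \omega(v)) \leq \Delta$ forces $u$ onto one of the two boundary spheres of $H_t$'s annulus, and monotonicity of $d$ along $\gamma_i$ bounds the $\gamma_i$-subpath from $r_i^t$ to $u$ in $H_{t+1}$-length by at most $3\Delta/2$.

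The main obstacle will be this last case: one must convert the $H_{t+1}$-distance bound just obtained into a bound on $\dist_{\omega}^{H_t}(r_i^t, v)$ sharp enough to contradict (P6). I plan to handle this by a finer analysis of how shortest paths in $H_{t+1}$ between vertices of $V_{H_t}$ can locally detour through the cut $C$, exploiting the quantitative bound $r \geq 20h\Delta$ on the outer cut radius (which makes any deep excursion through the inner ball around $c_{t+1}$ prohibitively long) to show that the only admissible short detours keep the corresponding $H_t$-path from $r_i^t$ to $v$ of length strictly less than $3\Delta$, contradicting (P6) and completing the proof.
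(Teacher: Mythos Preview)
Your approach to the intersection claim and to edges with $u \in \gamma_i \cap V_{H_t}$ is correct and essentially matches the paper: both use that the prefix of $\gamma_i$ inside $V_{H_t}$ has $\omega$-length at most $\Delta$, combine this with (P6), and invoke \pref{lem:noedge}. Your monotonicity argument for why $\gamma_i \cap V_{H_t}$ is an initial segment is a clean way to justify the bound $\len_\omega(\gamma_i \cap V_{H_t}) \leq \Delta$ that the paper asserts without detail.

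The gap is in your plan for the remaining case $u \in \gamma_i \setminus V_{H_t}$. You correctly isolate the single problematic vertex (the unique $u_{k+1} \in \gamma_i \cap \vvS_{\omega}^{H_{t+1}}(c_{t+1},r)$), and you obtain $\dist_{\omega}^{H_{t+1}}(r_i^t, u_{k+1}) \leq 3\Delta/2$. But your proposed resolution --- converting this into an upper bound on $\dist_{\omega}^{H_t}(r_i^t, v)$ that contradicts (P6) --- does not work. In general $\dist_{\omega}^{H_t}$ can be arbitrarily larger than $\dist_{\omega}^{H_{t+1}}$: the cut may disconnect short $H_{t+1}$-paths, and there is no mechanism to ``repair'' the single detour vertex $u_{k+1}$ into a short $H_t$-path from $u_k$ to $v$ (indeed you have already shown $\dist_{\omega}^{H_t}(u_k,v) > 2\Delta$, so no such short path exists). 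The bound $r \geq 20h\Delta$ is irrelevant here: it rules out deep excursions toward $c_{t+1}$, but the obstruction is a one-vertex detour through the inner boundary sphere, not a deep excursion.

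The paper avoids this conversion entirely. Instead of trying to return to $H_t$, it applies \pref{lem:noedge} \emph{in $H_{t+1}$}: since $H_{t+1}$ is itself an induced subgraph of $G$ with $\max_{V_{H_{t+1}}} \omega \leq \Delta$, it suffices to lower-bound $\dist_{\omega}^{H_{t+1}}(u,v)$ by more than $\Delta$ for $u \in \gamma_i \setminus \cB_{\omega}^{H_{t+1}}(c_{t+1},r)$ and $v \in A_j^t \cup W^t$. The paper's displayed inequality does this via the triangle inequality, subtracting the length of the short initial segment $\gamma_i[r_i^t,u]$ (bounded by $\len_\omega(\gamma_i \cap V_{H_t}) + \tfrac12 \max_{\vvS}\omega \leq 3\Delta/2$) from the (P6) lower bound of $3\Delta$. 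This is the step you are missing: work in $H_{t+1}$ for the no-edge conclusion rather than trying to pull the estimate back into $H_t$.
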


\begin{proof} Observe that $\len_{\omega}(\gamma_i \cap V_{H_t}) \leq \Delta$ and $\gamma_i$ emanates from $r_i^t$. (P6) implies that \[\dist_{\omega}^{H_t}(r_i^t, A_j^{t} \cup W^t) \geq 3\Delta\,,\] and thus $\gamma_i \cap (A_j^{t} \cup W^t) = \emptyset$ for $i \neq j$.

Note furthermore that $\vvS_{\omega}^{H_{t+1}}(c_{t+1}, r)$ separates $\gamma_i \setminus V_{H_t}$ from $V_{H_t}$ in $H_{t+1}$. Thus we need only prove that $E_G(\gamma_i \setminus {\cB}_{\omega}^{H_{t+1}}(c_{t+1},r), A_j^t \cup W^t) = \emptyset$. But we have \begin{align*} \dist_{\omega}^{H_{t+1}}(\gamma_i \setminus {\cB}_{\omega}^{H_{t+1}}(c_{t+1},r), A_j^t \cup W^t) & \geq \dist_{\omega}^{H_{t}}(r_i^t, A_j^t \cup W^t) - \len_{\omega}(\gamma_i \cap V_{H_t}) - \tfrac12 \max_{v \in \vvS_{\omega}^{H_{t+1}}(c_{t+1}, r)} \omega(v) \\& \stackrel{\textrm{(A1)}}{\geq}
 \dist_{\omega}^{H_{t}}(r_i^t, A_j^t \cup W^t) -3\Delta/2 \\ &\stackrel{\textrm{(P6)}}{>} \Delta\,.
\end{align*} Therefore \pref{lem:noedge} yields $E_G(\gamma_i \setminus {\cB}_{\omega}^{H_{t+1}}(c_{t+1},r), A_j^t \cup W^t) = \emptyset$, and this suffices to prove $E_G(\gamma_i, A_j^t \cup W^t)=\emptyset$ for $i \neq j$. \end{proof}

Let us verify the six properties (P1)--(P6) above in order.

 \begin{enumerate}
\item Consider first the sets $A_i^{t+1}$ for $i=1,2,\ldots,t$. For $i \neq j$, we have $\gamma_i \cap A_j^t = \emptyset$ and $E_G(\gamma_i, A_j^t)=\emptyset$ from \pref{lem:far-prelim}. Next consider the sets $\gamma_i \cap A_i^{t+1}$ and $\gamma_j \cap A_j^{t+1}$ for $i \neq j$ and $i,j \leq t$. By (P5), we have $\dist_{\omega}^G(r_i^t, r_j^t) \geq 14(h-t)\Delta$, and thus $\dist_{\omega}^G(\gamma_i \cap A_{i}^{t+1}, \gamma_j \cap A_j^{t+1}) \geq 21(h-t)\Delta -2\cdot 9 \Delta > \Delta$. Hence $\gamma_i \cap A_i^{t+1}$ and $\gamma_j \cap A_j^{t+1}$ are disjoint and by \pref{lem:noedge}, $E_G(\gamma_i \cap A_i^{t+1}, \gamma_j \cap A_j^{t+1}) = \emptyset$.

Finally, observe that $A_{t+1}^{t+1} \cap A_i^{t+1} = \emptyset$ and $E_G(A_{t+1}^{t+1}, A_i^{t+1})=\emptyset$ for $i \leq t$ because $\vvS_{\omega}^{H_{t+1}}(c_{t+1},r-8\Delta)$ separates $A_{t+1}^{t+1}$ from $\bigcup_{i \leq t} A_i^{t+1}$ in $H_{t+1}$.

\item Observe that $\vvS_{\omega}^{H_{t+1}}(c_{t+1},r-4\Delta)$ and $\vvS_{\omega}^{H_{t+1}}(c_{t+1},r-8\Delta)$ are disjoint because $\max_{v \in V_{H_{t+1}}} \omega(v) \leq \Delta$ by assumption (A1).  It follows that $\vvS_{\omega}^{H_{t+1}}(c_{t+1},r-4\Delta)$ separates $W^{t+1} \setminus W^t$ from $W^t$ in $H_{t+1}$.  We thus need only verify that $W^{t+1}\setminus W^t$ is an independent set.

To this end, observe that \begin{equation}\label{eq:wrep-dist} \dist_{\omega}^G(w_{i,t+1}, r_i^t) \leq 9\Delta+\Delta/2 \leq 9.5\Delta\,, \end{equation} where we have again employed (A1). This implies that for $i \neq j$, \[\dist_{\omega}^G(w_{i,t+1}, w_{j,t+1}) \geq \dist_{\omega}^G(r_i^t, r_j^t) - 2\cdot 9.5\Delta\stackrel{\textrm{(P5)}}{\geq} 21(h-t)\Delta - 19 \Delta > \Delta\,,\] hence \pref{lem:noedge} implies that $W_{t+1} \setminus W_t$ is indeed an independent set.

\item The facts that $E_G(A_{t+1}^{t+1}, w_{i,t+1}) \neq \emptyset$ and $E_G(A_i^{t+1}, w_{i,t+1}) \neq \emptyset$ for $i \leq t$ both follow immediately from the construction ($w_{i,t+1}$ is a separator vertex on the path $\gamma_i$ connecting $r_i^t$ to $c_{t+1}$).

We are left to verify that for every $\{u,v\} \in V_{K_{t+1}}$ and $x \notin \{u,v\}$, we have $E_G(w_{uv}, A_x^{t+1})=\emptyset$. We argue this using three cases: \begin{itemize} \item For $i \leq t$,  $E_G(w_{i,t+1}, A_x^t) = \emptyset$:

  This follows because $E_G(w_{i,t+1}, V_{H_t})=\emptyset$
since $\vvS_{\omega}^{H_{t+1}}(c_{t+1}, r)$ separates $w_{i,t+1}$ from $V_{H_t}$ in $H_{t+1}$. \item For $i,j \leq t$ with $i \neq j$: $E_G(w_{i,t+1}, \gamma_j \cap A_j^{t+1})=\emptyset$:

We have \begin{align*} \dist_{\omega}^{H_{t+1}}(w_{i,t+1}, \gamma_j \cap A_j^{t+1}) & \stackrel{\eqref{eq:wrep-dist}}{\geq} \dist_{\omega}^{G}(r_i^t, r_j^t) - 9.5 \Delta - \len_{\omega}(\gamma_j \cap A_j^{t+1}) \\ &\geq \dist_{\omega}^G(r_i^t, r_j^t) - 18.5 \Delta \\ &\stackrel{\textrm{(P5)}}{\geq} 21(h-t)\Delta - 18.5 \Delta > \Delta\,, \end{align*} which implies the desired bound using \pref{lem:noedge}.

\item $E_G(W^t, \bigcup_{i \leq t} \gamma_i \cap A_i^{t+1})=\emptyset$:  This follows from \pref{lem:far-prelim}. \end{itemize}

\item For $i \leq t$, we have \[\dist_{\omega}^G(r_i^{t+1}, c_i) \leq \dist_{\omega}^G(r_i^t,c_i) + \dist_{\omega}^G(r_i^t, r_i^{t+1}) \leq 10 t\Delta + 9.5\Delta \leq 10(t+1)\Delta.\] Moreover, $r_{t+1}^{t+1}=c_{t+1}$.

\item Similarly, for $i,j \leq t$ and $i\neq j$, \[ \dist_{\omega}^G(r_i^{t+1}, r_j^{t+1}) \geq \dist_{\omega}^G(r_i^t, r_j^t) - 2\cdot 9.5\Delta \stackrel{\textrm{(P5)}}{\geq} 21(h-t-1)\Delta\,. \] One also has \[ \dist_{\omega}^G(r_{t+1}^{t+1}, r_i^{t+1}) = \dist_{\omega}^G(c_{t+1}, r_i^{t+1}) \stackrel{\textrm{(P4)}}{\geq} \dist_{\omega}^G(c_{t+1}, c_i) - 10t\Delta \stackrel{\eqref{eq:center-sep}}{\geq} 21(h-t-1)\Delta\,. \]

\item First, note that if $i \leq t$, then $r_i^{t+1} \in \vvS_{\omega}^{H_{t+1}}(c_{t+1}, r-4\Delta)$, so using (A1) gives \[\dist_{\omega}^{H_{t+1}}(c_{t+1}, r_i^{t+1}) \leq r-4\Delta+\Delta/2\,.\] On the other hand, $(W^t \cup \bigcup_{j \leq t} A_j^t) \cap \vvB_{\omega}^{H_{t+1}}(c_{t+1}, r) = \emptyset$, hence \[ \dist_{\omega}^{H_{t+1}}\left(c_{t+1}, W^t \cup {\textstyle \bigcup_{j \leq t} A_j^t}\right) > r\,. \]
It follows from the triangle inequality
that $\dist_{\omega}^{H_{t+1}}(r_i^{t+1}, W^t \cup \bigcup_{j \leq t} A_j^t) \geq 3\Delta$.

Next, we have, for $i,j \leq t$ and $i \neq j$, \begin{align*} \dist_{\omega}^{H_{t+1}}(r_i^{t+1}, \gamma_j \cap A_j^{t+1}) &\geq \dist_{\omega}^{G}(r_i^{t+1}, \gamma_j \cap A_j^{t+1}) \\ &\geq \dist_{\omega}^G(r_i^t, r_j^t) - \dist_{\omega}^G(r_i^t, r_i^{t+1}) - \len_{\omega}(\gamma_j \cap A_j^{t+1}) \\ &\geq 21 (h-t)\Delta - 9.5\Delta - 5\Delta \geq 3\Delta\,. \end{align*} Also note that $\dist_{\omega}^{H_{t+1}}(c_{t+1}, W_{t+1} \setminus W_t) \leq r-8\Delta+\Delta/2$ because $W^{t+1} \setminus W_t \subseteq \vvS_{\omega}^{H_{t+1}}(c_{t+1},r-8\Delta)$ and $\dist_{\omega}^{H_{t+1}}(c_{t+1}, r_i^{t+1}) \geq r-4\Delta-\Delta/2$ because $r_i^{t+1} \in \vvS_{\omega}^{H_{t+1}}(c_{t+1},r-4\Delta)$. It follows that \[ \dist_{\omega}^{H_{t+1}}(r_i^{t+1}, W^{t+1}\setminus W^t) \geq 3\Delta\,. \] We have thus verified that for $i\leq t$, it holds that $\dist_{\omega}^{H_{t+1}}(r_i^{t+1}, W^{t+1} \cup \bigcup_{j \leq t, j \neq i} A_j^{t+1}) \geq 3\Delta$.

The fact that $\dist_{\omega}^{H_{t+1}}(r_i^{t+1}, A_{t+1}^{t+1}) \geq 3\Delta$ for $i \leq t$ follows
 similarly since $A_{t+1}^{t+1} = {\cB}_{\omega}^{H_{t+1}}(c_{t+1}, r-8\Delta)$
and $r_i^{t+1} \in \vvS_{\omega}^{H_{t+1}}(c_{t+1},r-4\Delta)$.

We are left to verify the last case:  $\dist_{\omega}^{H_{t+1}}(r_{t+1}^{t+1}, W^{t+1} \cup \bigcup_{i \leq t} A_i^{t+1}) \geq 3\Delta$. This follows from the two facts:  $\left(W^{t+1} \cup \bigcup_{i \leq t} A_i^{t+1}\right) \cap {\cB}_{\omega}^{H_{t+1}}(c_{t+1},r-8\Delta) = \emptyset$ and $r \geq 20\Delta$ from \eqref{eq:rbound}. \end{enumerate} We have completed verification of the inductive step, and thus by induction there exists a careful $K_h$ minor in $G$, completing the proof. \end{proof}

\section{Applications and discussion} \label{sec:applications}

\subsection{Spectral bounds} \label{sec:eigenvalues}

Say that a conformal graph $(G,\omega)$ is {\em $(r,\e)$-spreading} if it holds that for every subset $S \subseteq V_G$ with $|S|=r$, one has \[ \frac{1}{|S|^2} \sum_{u,v \in S} \dist_{\omega}(u,v) \geq \e \|\omega\|_{\ell_2(V_G)}\,. \] Let $\e_r(G,\omega)$ be the smallest value $\e$ for which $(G,\omega)$ is $(r,\e)$-spreading. The next theorem appears as \cite[Thm 2.3]{KLPT09}.

\begin{theorem}\label{thm:klpt-evs} If $G$ is an $n$-vertex graph with maximum degree $d_{\max}$, then for $k=1,2,\ldots,n-1$, the following holds: If $\omega : V_G \to \R_+$ satisfies $\|\omega\|_{\ell_2(V_G)}=1$, and $(V_G, \dist_{\omega})$ admits an $(\alpha, \e/2)$-padded partition with $\e = \e_{\lfloor n/8k\rfloor}(G,\omega)$, then \[ \lambda_k(G) \lesssim d_{\max} \frac{\alpha^2}{\e^2 n}\,. \] \end{theorem}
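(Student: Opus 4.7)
The plan is to invoke the Courant--Fischer characterization $\lambda_k(G) = \min_{\dim V = k+1}\max_{f\in V\setminus\{0\}} R(f)$, where $R(f) = \sum_{uv\in E_G}(f(u)-f(v))^2/\|f\|_{\ell^2}^2$, and to build $V$ as the span of $k+1$ disjointly supported ``bump'' test functions derived from the padded random partition. After sampling a $(\alpha,\e/2)$-padded partition $\bm P$ of $(V_G,\dist_\omega)$, for each piece $S \in \bm P$ I would define
\[
f_S(x) = \Ind_S(x)\cdot \min\!\bigl(\dist_\omega(x,V_G\setminus S),\tfrac{\e}{4\alpha}\bigr)\mper
\]
Each $f_S$ is $1$-Lipschitz on $(V_G,\dist_\omega)$ and supported on $S$, so the $\{f_S\}$ are pairwise $\ell^2$-orthogonal. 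Since $\dist_\omega(u,v)=\tfrac12(\omega(u)+\omega(v))$ for $uv \in E_G$, the estimate $(f_S(u)-f_S(v))^2 \le \tfrac12(\omega(u)^2+\omega(v)^2)$ together with the fact that each edge is touched by at most two pieces yields $\sum_{S\in\bm P} E_S \le d_{\max}\|\omega\|_{\ell^2}^2 = d_{\max}$, where $E_S$ denotes the Dirichlet energy of $f_S$.

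For the mass $M_S = \|f_S\|_{\ell^2}^2$, call a vertex $x$ \emph{good} when $B_\omega(x,\e/(2\alpha)) \subseteq \bm P(x)$, so that $f_{\bm P(x)}(x) = \e/(4\alpha)$. The padding property gives $\mE[|\mathrm{Good}|]\ge n/2$, so after fixing a favorable realization one has $\sum_S M_S \gtrsim n\e^2/\alpha^2$. The spreading hypothesis now enters as a \emph{per-piece mass cap}: any piece of $\dist_\omega$-diameter at most $\e/2$ containing $r=\lfloor n/(8k)\rfloor$ vertices would contradict the $(r,\e)$-spreading property (whose average pairwise distance $\ge \e$ cannot be achieved inside a set of diameter $\e/2$), so every piece has $|S| < r$ and hence $M_S \lesssim n\e^2/(k\alpha^2)$.

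These three bounds combine via a Markov step. Setting $t = C\,d_{\max}\alpha^2/(n\e^2)$ for a sufficiently large absolute constant $C$, the pieces with $E_S/M_S > t$ contribute mass at most $d_{\max}/t \le \tfrac12 \sum_S M_S$ in total; hence the ``low-ratio'' pieces $\cG = \{S : E_S/M_S \le t\}$ still carry $\gtrsim n\e^2/\alpha^2$ mass, and combined with the per-piece cap this forces $|\cG| \gtrsim k$, in particular $|\cG|\ge k+1$. For any $k+1$ pieces $S_0,\dots,S_k \in \cG$, disjoint supports yield $\|f\|_{\ell^2}^2 = \sum_i a_i^2 M_{S_i}$ for $f = \sum_i a_i f_{S_i}$, while each edge's contribution to the Dirichlet form splits across at most two pieces, so $R(f) \le 2\max_i E_{S_i}/M_{S_i} \le 2t \lesssim d_{\max}\alpha^2/(n\e^2)$, and the min-max principle delivers the claimed eigenvalue bound.

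The most delicate point is the three-way balance: padding controls total mass, the edge-length constraint bounds total energy, and only the spreading hypothesis prevents mass from pooling into a single piece (which would yield only a bound on $\lambda_1$ rather than $\lambda_k$). Tuning the threshold $t$ so that the Markov step simultaneously preserves enough total mass \emph{and} forces enough low-ratio pieces is the crux of the calculation; all three inputs are essential, and removing any one collapses the argument.
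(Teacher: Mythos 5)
Your argument is correct. Note that the paper does not prove this statement at all---it is imported verbatim as Theorem~2.3 of the cited reference [KLPT09]---so what you have written is a self-contained reconstruction of that reference's proof, and it follows the same strategy: truncated distance-to-complement bump functions from a padded partition, the Lipschitz/edge-length bound $\sum_S E_S\le d_{\max}\|\omega\|_{\ell_2}^2$, the spreading hypothesis as a per-piece cardinality (hence mass) cap, and a Markov/pigeonhole selection of $k+1$ disjointly supported low-Rayleigh-quotient functions fed into Courant--Fischer. The constants all check out (e.g.\ $t=64\,d_{\max}\alpha^2/(n\e^2)$ leaves at least $2k$ low-ratio pieces), so there is nothing to repair; the only cosmetic imprecisions are that $\dist_\omega(u,v)\le\tfrac12(\omega(u)+\omega(v))$ on edges rather than equality, and that pieces of zero mass should be discarded before the Markov step---neither affects the argument.
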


The methods of \cite{KLPT09} also give a way of producing $(r,\e)$-spreading weights. Consider a graph $G$ and let $\mu$ be a probability measure on subsets of $V_G$. A flow $\Lambda : \cP_G \to \R_+$ is called a {\em $\mu$-flow} if \[ \Lambda[u,v] \geq \mu(\left\{ S : u,v \in S\right\})\,. \] For a number $r \leq |V_G|$, let $\cF_r(G)$ denote the set of all $\mu$-flows in $G$ with $\supp(\mu) \subseteq {V_G \choose r}$ (i.e., $\mu$ is supported on subsets of size exactly $r$). The following is a consequence of the duality theory of convex programs (see \cite[Thm 2.4]{KLPT09}).

\begin{theorem}\label{thm:klpt-duality} For every graph $G$ and $r \leq |V_G|$, it holds that \[ \max_{\omega : V_G \to \R_+} \left\{ \e_r(G,\omega) : \|\omega\|_{\ell_2(V_G)} \leq 1\right\} = \frac{1}{r^2} \min \left\{ \|c_{\Lambda}\|_{\ell_2(V_G)} : \Lambda \in \cF_r(G) \right\}\,. \] \end{theorem}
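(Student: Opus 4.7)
The plan is to derive the identity by two applications of Sion's minimax theorem, chained through an LP-dual flow formulation.

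First, I would rewrite the LHS as a saddle-point problem. By homogeneity of $\dist_\omega$ in $\omega$, I can fix $\|\omega\|_{\ell_2(V_G)} = 1$, so that $\e_r(G,\omega) = \frac{1}{r^2} \min_{|S|=r} \sum_{u,v \in S} \dist_\omega(u,v)$. Relaxing the discrete minimum over $r$-subsets to a minimum over probability measures $\mu$ on $\binom{V_G}{r}$ and setting $q_\mu(u,v) := \mu(\{S : u,v \in S\})$, the LHS becomes
\[
\max_{\substack{\omega \geq 0 \\ \|\omega\|_{\ell_2} \leq 1}} \min_{\mu}\ \frac{1}{r^2} \sum_{u,v \in V_G} q_\mu(u,v)\, \dist_\omega(u,v).
\]
The objective is concave in $\omega$ (a sum of pointwise minima of linear functionals, since $\dist_\omega(u,v) = \min_{\gamma \in \cP_G^{uv}} \len_\omega(\gamma)$) and linear in $\mu$, with both feasible sets convex and compact. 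Sion's minimax theorem therefore lets me swap $\max_\omega$ and $\min_\mu$.

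Next, for fixed $\mu$ I would express the inner max over $\omega$ as a flow problem. Using the pairing $\sum_\gamma \Lambda(\gamma)\, \len_\omega(\gamma) = \langle c_\Lambda, \omega\rangle$, one obtains $\sum_{u,v} q_\mu(u,v)\, \dist_\omega(u,v) = \min_\Lambda \langle c_\Lambda, \omega\rangle$, where the minimum ranges over $\mu$-flows $\Lambda$ (an optimal $\Lambda$ routes $q_\mu(u,v)$ units along a shortest $u,v$-path). The resulting max-min is bilinear in $\omega$ and $\Lambda$; restricting $\Lambda$ to a bounded sublevel set of $\|c_\Lambda\|_{\ell_2}$ (without loss, since a minimizer exists) makes both feasible sets compact, so Sion's theorem applies again to give
\[
\max_\omega \min_\Lambda \langle c_\Lambda, \omega\rangle = \min_\Lambda \max_\omega \langle c_\Lambda, \omega\rangle = \min_\Lambda \|c_\Lambda\|_{\ell_2},
\]
where the last equality uses $c_\Lambda \geq 0$. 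Since every $\Lambda \in \cF_r(G)$ is a $\mu$-flow for some $\mu$ supported on $r$-subsets and vice versa, combining the two swaps yields
\[
\text{LHS} = \frac{1}{r^2} \min_\mu \min_{\Lambda \in \cF(\mu)} \|c_\Lambda\|_{\ell_2} = \frac{1}{r^2} \min_{\Lambda \in \cF_r(G)} \|c_\Lambda\|_{\ell_2}.
\]

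The main point I expect to need care is the bookkeeping tying $\len_\omega$ to $c_\Lambda$: under the paper's edge-length convention $\frac{\omega(u)+\omega(v)}{2}$, one computes $\len_\omega(v_0 \cdots v_k) = \sum_{v \in \gamma} \omega(v) - \frac12(\omega(v_0)+\omega(v_k))$, so for the pairing $\sum_\gamma \Lambda(\gamma) \len_\omega(\gamma) = \langle c_\Lambda, \omega\rangle$ to be exact either the endpoints of $c_\Lambda$ must be weighted at $\tfrac12$, or the equivalent length $\sum_{v \in \gamma} \omega(v)$ must be used. Either convention yields the stated equality, and everything else is routine convex duality.
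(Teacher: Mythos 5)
The paper offers no proof of this statement: it is imported verbatim from \cite[Thm.~2.4]{KLPT09} with the remark that it ``is a consequence of the duality theory of convex programs.'' Your argument is precisely the standard route to that duality --- relax the minimum over $r$-subsets to probability measures, apply one minimax swap, identify the inner maximization over $\omega$ as the LP dual of a minimum-congestion $\mu$-flow problem, and swap again --- and each step checks out: the objective is concave in $\omega$ (a nonnegative combination of pointwise minima of linear functionals) and linear in $\mu$, both feasible regions are convex with at least one compact, and $\max\{\langle c_\Lambda,\omega\rangle : \omega\ge 0,\ \|\omega\|_{\ell_2}\le 1\}=\|c_\Lambda\|_{\ell_2}$ since $c_\Lambda\ge 0$. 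So I regard the proposal as correct in substance, and as supplying an argument the paper delegates to a citation.

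The one place where I would push back slightly is your closing claim that ``either convention yields the stated equality.'' Under the paper's \emph{literal} conventions --- edge lengths $\frac{\omega(u)+\omega(v)}{2}$ in the definition of $\e_r$, but $c_\Lambda(v)=\sum_{\gamma\ni v}\Lambda(\gamma)$ counting endpoints with full weight, and $\mu$-flows required to satisfy $\Lambda[u,u]\ge\mu(\{S: u\in S\})$ via the length-zero paths the paper explicitly allows --- the pairing gives only
$\min_{\Lambda}\langle c_\Lambda,\omega\rangle=\sum_{u,v}q_\mu(u,v)\bigl(\dist_\omega(u,v)+\tfrac{\omega(u)+\omega(v)}{2}\bigr)$,
so the two sides of the theorem agree only up to this endpoint/diagonal correction (a factor of at most $2$), not exactly. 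This is an inconsistency inherited from translating \cite{KLPT09} (which uses $\len_\omega(\gamma)=\sum_{v\in\gamma}\omega(v)$) into this paper's normalization, not a flaw in your reasoning --- you located it correctly, and it is immaterial to the use of the theorem in \pref{sec:eigenvalues} --- but a finished write-up should state which convention is in force rather than assert exact equality for both.
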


We need to extend the notion of $H$-flows to weighted graphs. Suppose that $H$ is equipped with a non-negative weight on edges $\mathfrak{D} : E_H \to \R_+$. Then an {\em $(H,\mathfrak{D})$-flow in $G$} is a pair $(\Lambda,\f)$ that satisfies properties (1) and (2) of an $H$-flow, but property (3) is replaced by:  For every $u,v \in V_G$, \[ \Lambda[u,v] = \sum_{\{x,y\} \in E_H : \{\f(x),\f(y)\}=\{u,v\}} \mathfrak{D}(\{x,y\})\,. \] We define the {\em crossing congestion $\cross^*_G(H,\mathfrak{D})$} as the infimum of $\cross_G(\Lambda,\f)$ over all $(H,\mathfrak{D})$-flows $(\Lambda,f)$ in $G$. Given a measure $\mu$ on $2^{V_H}$, let $\mathfrak{D}_{\mu}$ be defined by \[ \mathfrak{D}_{\mu}(\{x,y\}) = \mu\left(\left\{S \subseteq V_H : x,y \in S\right\}\right)\,. \] We need the following result which is an immediate consequence of Corollary 3.6 and Corollary 4.2 in \cite{KLPT09}.

\begin{theorem}\label{thm:klpt-main} There is a constant $\theta_0 > 0$ such that for every $h \geq 3$ and $r \geq \theta_0 h^2 \log h$, the following holds. If $G$ excludes $K_h$ as a minor, then for any graph $H$ and any measure $\mu$ supported on ${V_H \choose r}$, it holds that \[ \cross_G^*(H,\mathfrak{D}_{\mu}) \gtrsim \frac{r^5}{|V_H| h^2 \log h}\,. \] \end{theorem}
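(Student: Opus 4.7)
The plan is to reduce this weighted crossing bound to the unweighted $K_r$-crossing bound from \pref{thm:blr}, aggregated over the probability measure $\mu$. Since $r \geq \theta_0 h^2 \log h$ comfortably exceeds $Kh\sqrt{\smash[b]{\log h}}$ (for $\theta_0$ large enough), the strong form $\cross_G^*(K_r) \gtrsim r^4/(h^2\log h)$ applies to every $r$-element subset of $V_H$.

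First, I would disaggregate via the identity $\mathfrak{D}_\mu = \sum_{S\in\supp(\mu)} \mu(\{S\})\cdot \mathbf{1}_{E(K_S)}$: any $(H,\mathfrak{D}_\mu)$-flow $(\Lambda,\f)$ in $G$ can be written as $\Lambda = \sum_S \mu(\{S\})\,\Lambda_S$, where each $\Lambda_S$ is an honest $K_r$-flow in $G$ routed via $\f|_S$ (one may assume $\f$ is injective on each such $S$ by a general-position argument). Bilinearity of the crossing count yields
\[
\cross_G(\Lambda,\f) = \sum_{S,T\in\supp(\mu)} \mu(\{S\})\mu(\{T\})\,\chi_G(\Lambda_S,\Lambda_T),
\]
where $\chi_G(\Lambda_S,\Lambda_T)$ counts intersecting path-pairs with four distinct $H$-endpoints, and every summand is non-negative. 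The diagonal terms $\chi_G(\Lambda_S,\Lambda_S) = \cross_G(\Lambda_S)$ are each $\gtrsim r^4/(h^2\log h)$ by \pref{thm:blr}.

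Next, I would control the off-diagonal terms and aggregate through the second-order structure of $\mu$. For each pair $S,T\in\supp(\mu)$ with $|S\cap T|$ not too close to $r$, extend $\Lambda_S+\Lambda_T$ to an honest $K_{S\cup T}$-flow $\Lambda^{ST}$ by padding with arbitrary inter-set routings in $G$. Then \pref{thm:blr} gives $\cross_G(\Lambda^{ST}) \gtrsim |S\cup T|^4/(h^2\log h)$, and a bookkeeping argument shows that the $\chi_G(\Lambda_S,\Lambda_T)$ share of this absorbs a constant fraction, so $\chi_G(\Lambda_S,\Lambda_T) \gtrsim r^4/(h^2\log h)$. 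To aggregate across $\mu\otimes\mu$, write $p_v = \Pr_{S\sim\mu}[v\in S]$; then $\sum_v p_v = r$, and Cauchy--Schwarz delivers $\mathbb{E}_{S,T\sim\mu}|S\cap T| = \sum_v p_v^2 \geq r^2/|V_H|$. A case split on whether $\mu$ is ``concentrated'' (in which $\sum_S \mu(\{S\})^2$ is large enough that the diagonal contribution alone reaches $r^5/(|V_H|h^2\log h)$) or ``spread'' (in which a constant fraction of $\mu\otimes\mu$ mass sits on pairs with $|S\cap T|\leq r/2$ via Markov, so the off-diagonal contribution does the job) then gives the claimed bound in either case, with the factor $r/|V_H|$ materializing precisely as the Cauchy--Schwarz gain.

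The main obstacle is the off-diagonal accounting. The crossings guaranteed by \pref{thm:blr} for the extended flow $\Lambda^{ST}$ could in principle be absorbed entirely by the intra-$\Lambda_S$, intra-$\Lambda_T$, or padding contributions, leaving $\chi_G(\Lambda_S,\Lambda_T)$ small; ruling this out requires either a symmetrization over many choices of the bridging routing, or a refinement of \pref{thm:blr} that controls the endpoint types of the forced crossings---essentially the content of Corollary~3.6 in \cite{KLPT09}. A cleaner version of the aggregation may avoid the dichotomy altogether by passing through the intermediate notion of $(r,\varepsilon)$-spreading weights (as in \pref{thm:klpt-duality}), which packages exactly this balance.
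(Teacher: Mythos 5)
First, a point of context: the paper does not prove \pref{thm:klpt-main} at all --- it is imported wholesale, stated as ``an immediate consequence of Corollary 3.6 and Corollary 4.2 in \cite{KLPT09}.'' So you are attempting to re-derive a cited black box, and your own closing sentence concedes that the missing step is ``essentially the content of Corollary~3.6 in \cite{KLPT09}.'' That concession is accurate, and it means the proposal reduces the theorem to the very result being cited rather than proving it.

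The gap is not just the bookkeeping you flag; the per-pair off-diagonal claim in your ``spread'' branch is false as stated. If $S\cap T=\emptyset$ (or is small), there is no reason for $\chi_G(\Lambda_S,\Lambda_T)$ to be large: the images $\f(S)$ and $\f(T)$ may sit in vertex-disjoint pieces of $G$, each internally rich enough to route its own clique flow, so that the paths of $\Lambda_S$ and $\Lambda_T$ never meet and $\chi_G(\Lambda_S,\Lambda_T)=0$ --- even though each flow individually has $\Omega(r^4/(h^2\log h))$ self-crossings. Any forced interaction between $\Lambda_S$ and $\Lambda_T$ must be extracted from the shared terminals $S\cap T$, which is exactly the opposite of the regime ($|S\cap T|\le r/2$) your Markov step selects. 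A sanity check confirms this: if your per-pair bound held for a constant fraction of $\mu\otimes\mu$, summing would give $\cross_G^*(H,\mathfrak{D}_\mu)\gtrsim r^4/(h^2\log h)$ with no $|V_H|$ dependence, strictly stronger than the stated theorem for all $|V_H|>r$; the $r/|V_H|$ loss in the true bound is telling you that the naive union-flow extension cannot work. The ``concentrated'' branch is also not complementary to the ``spread'' branch in the way you need ($\sum_S\mu(\{S\})^2$ small does not force most pairs to have small intersection --- take $\mu$ uniform over $r$-sets all containing a fixed $(r/2)$-set). So the dichotomy does not close, and the aggregation that produces the $r^5/|V_H|$ exponent has to come from the finer quadruple-counting argument of \cite{KLPT09} rather than from applying \pref{thm:blr} pairwise.
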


We can use the preceding theorem combined with the method of \pref{sec:vcon-rigs} to reach a conclusion for rigs over $K_h$-minor-free graphs.

\begin{corollary}\label{cor:c2} Suppose that $\hat{G} \in \rig(G)$ and $G$ excludes $K_h$ as a minor for some $h \geq 3$. Then for every $r \geq \theta_0 h^2 \log h$ and $\Lambda \in \cF_r(\hat G)$, it holds that \[ \|c_{\Lambda}\|^2_{\ell_2(V_{\hat G})} \gtrsim \frac{r^5}{d_{\max}(\hat G) |V_{\hat G}| h^2 \log h}\,. \] \end{corollary}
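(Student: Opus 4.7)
The plan is to mimic the argument of \pref{sec:vcon-rigs} (specifically the proof of \pref{thm:mat}), but to apply \pref{thm:klpt-main} in place of \pref{thm:blr} and to bound the crossing congestion of the pushed flow by the $\ell_2$-congestion of the original flow (rather than the $\ell_\infty$-congestion).

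\medskip

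\noindent\emph{Setup.} Fix $\Lambda \in \cF_r(\hat G)$ and let $\mu$ be the associated measure supported on $\binom{V_{\hat G}}{r}$. Since decreasing any $\Lambda[x,y]$ down to $\mathfrak{D}_{\mu}(\{x,y\})$ can only decrease $\|c_\Lambda\|_{\ell_2(V_{\hat G})}^2$ (each congestion value is a sum of non-negative terms), we may assume $\Lambda[x,y] = \mathfrak{D}_{\mu}(\{x,y\})$ for every pair $x,y \in V_{\hat G}$.

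\medskip

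\noindent\emph{Pushing the flow to $G$.} Let $\{R_u : u \in V_{\hat G}\}$ be a region representation of $\hat G$ over $G$, fix a distinguished $\check v \in R_v$ for each $v$, and define the path map $\gamma \mapsto \check\gamma$ exactly as in the proof of \pref{thm:mat}. Together with $\check\f(v)=\check v$, this yields a pair $(\check\Lambda,\check\f)$ in $G$ that satisfies $\check\Lambda[\check x,\check y] = \Lambda[x,y] = \mathfrak{D}_\mu(\{x,y\})$, so $(\check\Lambda,\check\f)$ is a $(K_{V_{\hat G}},\mathfrak{D}_\mu)$-flow in $G$ (choosing the $\check v$'s distinct where possible; any residual coincidences just merge contributions in the $(H,\mathfrak{D})$-flow condition and do not affect the argument).

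\medskip

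\noindent\emph{Upper bound on crossing congestion.} Applying the charging scheme of \pref{claim:crossup} verbatim to $(\check\Lambda, \check\f)$ gives
\[
\cross_G(\check\Lambda,\check\f) \leq \sum_{u \in V_{\hat G}} c_\Lambda(u)^2 + \sum_{\{u,v\} \in E_{\hat G}} (c_\Lambda(u)+c_\Lambda(v))^2.
\]
Using $(a+b)^2 \leq 2(a^2+b^2)$ and reorganizing the edge sum by vertices,
\[
\sum_{\{u,v\}\in E_{\hat G}} (c_\Lambda(u)+c_\Lambda(v))^2 \leq 2 \sum_{u \in V_{\hat G}} \deg_{\hat G}(u)\, c_\Lambda(u)^2 \leq 2\, d_{\max}(\hat G)\, \|c_\Lambda\|_{\ell_2(V_{\hat G})}^2,
\]
so that $\cross_G(\check\Lambda,\check\f) \lesssim d_{\max}(\hat G)\,\|c_\Lambda\|_{\ell_2(V_{\hat G})}^2$.

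\medskip

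\noindent\emph{Lower bound and conclusion.} Since $r \geq \theta_0 h^2 \log h$ and $G$ excludes $K_h$ as a minor, \pref{thm:klpt-main} applied with $H = K_{V_{\hat G}}$ yields
\[
\cross_G(\check\Lambda,\check\f) \geq \cross_G^*(K_{V_{\hat G}},\mathfrak{D}_\mu) \gtrsim \frac{r^5}{|V_{\hat G}|\, h^2 \log h}.
\]
Combining the two bounds gives the desired estimate
\[
\|c_\Lambda\|_{\ell_2(V_{\hat G})}^2 \gtrsim \frac{r^5}{d_{\max}(\hat G)\,|V_{\hat G}|\, h^2 \log h}.
\]

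\medskip

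The only subtle point is verifying that the charging argument of \pref{claim:crossup}, originally written for a $K_n$-flow in the $\ell_\infty$ setting, goes through when we care about $\ell_2$-congestion and when the $H$-flow has weights $\mathfrak{D}_\mu$ rather than unit demands: the charging is purely local (each intersection is charged to a vertex or edge of $\hat G$), so it is insensitive to the demands, and the degree-weighted reorganization is precisely where the factor $d_{\max}(\hat G)$ arises. Nothing else in the argument is delicate.
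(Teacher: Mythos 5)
Your proposal is correct and follows essentially the same route as the paper: push the $\mu$-flow to $G$ via the region/path map of Theorem~\ref{thm:mat}, bound $\cross_G(\check\Lambda,\check\f)$ by $O(d_{\max}(\hat G))\|c_\Lambda\|_{\ell_2(V_{\hat G})}^2$ using the charging scheme of Claim~\ref{claim:crossup}, and compare against the lower bound of Theorem~\ref{thm:klpt-main}. Your extra care about normalizing $\Lambda[x,y]$ down to $\mathfrak{D}_\mu(\{x,y\})$ and about non-injectivity of $\check\f$ only makes explicit what the paper leaves implicit.
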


\begin{proof} Suppose that $\Lambda \in \cF_r(\hat G)$.  Let $(\check{\Lambda},\check{\f})$ be the flow induced in $G$ from the mapping described in the proof of \pref{thm:mat}.  By \pref{claim:crossup}, it holds that \[ \cross_{G}(\check{\Lambda},\check{\f}) \leq 4 d_{\max}(\hat G) \|c_{\Lambda}\|_{\ell_2(V_{\hat G})}^2\,. \] But from \pref{thm:klpt-main}, we know that \[ \cross_G(\check{\Lambda},\check{\f}) \gtrsim \frac{r^5}{|V_{\hat G}|}\,.\qedhere \] \end{proof}

We are now in position to prove \pref{thm:eigenvalues}.

\begin{theorem}[Restatement of \pref{thm:eigenvalues}] Suppose that $G \in \rig(G_0)$ and $G_0$ excludes $K_h$ as a minor for some $h \geq 3$. If $d_{\max}$ is the maximum degree of $G$, then for any $k=1,2,\ldots,|V_G|-1$, it holds that \[ \lambda_k(G) \leq O(d_{\max}^2 h^6 \log h) \frac{k}{|V_G|}\,. \] \end{theorem}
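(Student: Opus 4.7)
The plan is to chain together the four main tools developed earlier in the paper: the $\mu$-flow/congestion bound for rigs (\pref{cor:c2}), the duality between $\mu$-flow congestion and spreading conformal weights (\pref{thm:klpt-duality}), the existence of $O(h^2)$-padded partitions for conformal metrics on rigs (\pref{thm:pad} via \pref{lem:padsep}), and finally the spectral bound of Kelner--Lee--Price--Teng (\pref{thm:klpt-evs}). Concretely, set $n = |V_G|$ and write $d = d_{\max}$. First I would treat the ``tail'' regime $k \geq n/(8\theta_0 h^2 \log h)$ separately: in this range the target bound $O(d^2 h^6 \log h)\, k/n$ already exceeds $\Omega(d^2 h^4) \geq 2d \geq \lambda_k(G)$ trivially (since every Laplacian eigenvalue is at most $2 d_{\max}$), so nothing needs to be proved. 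Thus I may assume $r \seteq \lfloor n/(8k)\rfloor \geq \theta_0 h^2 \log h$ and invoke the machinery of \Sref{sec:eigenvalues}.

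Next I would apply \pref{cor:c2} to this value of $r$: for every $\Lambda \in \cF_r(G)$,
\[
\|c_{\Lambda}\|_{\ell_2(V_G)}^2 \gtrsim \frac{r^5}{d\, n\, h^2 \log h}\,,
\]
and then appeal to \pref{thm:klpt-duality} to produce a conformal weight $\omega : V_G \to \R_+$ with $\|\omega\|_{\ell_2(V_G)} \leq 1$ and
\[
\e \seteq \e_r(G,\omega) \;\gtrsim\; \frac{1}{r^2}\cdot \frac{r^{5/2}}{\sqrt{d\, n\, h^2 \log h}} \;=\; \frac{\sqrt{r}}{\sqrt{d\, n\, h^2 \log h}}\,.
\]
Plugging in $r \asymp n/k$ gives $\e^2 \gtrsim \frac{1}{d\, k\, h^2 \log h}$.

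With this conformal metric in hand, the third step is to apply \pref{thm:pad} to $(G,\omega)$ at scale $\Delta = \e/2$ to obtain an $(\alpha,\e/2)$-random separator with $\alpha \leq O(h^2)$; then \pref{lem:padsep} upgrades this to an $(O(h^2), \e/2)$-padded partition of $(V_G, \dist_{\omega})$. Finally, feed this padded partition, the weight $\omega$, and the spreading bound into \pref{thm:klpt-evs}:
\[
\lambda_k(G) \;\lesssim\; d\, \frac{\alpha^2}{\e^2 n} \;\lesssim\; d \cdot \frac{h^4}{n} \cdot d\, k\, h^2 \log h \;=\; O(d^2 h^6 \log h)\,\frac{k}{n}\,,
\]
which is the claimed bound.

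I do not anticipate a serious obstacle, since every ingredient has already been proved; the entire argument is a matter of threading the parameters correctly. The one place requiring mild care is checking that the spreading parameter $r = \lfloor n/(8k)\rfloor$ really satisfies the hypothesis $r \geq \theta_0 h^2 \log h$ of \pref{cor:c2} and \pref{thm:klpt-main}, which is what forces the separate (and trivial) treatment of the regime where $k$ is too large relative to $n/(h^2\log h)$. A secondary minor point is that \pref{thm:pad} bounds the padding constant only up to a universal multiplicative factor in $\Delta$; this is absorbed into the $\lesssim$ in the last display, since passing from an $(\alpha, c\e/2)$-padded partition to an $(\alpha,\e/2)$-padded one is harmless up to constants in the application of \pref{thm:klpt-evs}.
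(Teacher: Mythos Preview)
Your proposal is correct and follows essentially the same route as the paper's own proof: set $r=\lfloor n/8k\rfloor$, dispose of the small-$r$ regime via the trivial bound $\lambda_k(G)\leq 2d_{\max}$, combine \pref{cor:c2} with \pref{thm:klpt-duality} to obtain a spreading conformal weight, invoke \pref{thm:pad} (together with \pref{lem:padsep}) for the $O(h^2)$-padded partition, and conclude with \pref{thm:klpt-evs}. Your secondary concern about the scale of the padded partition is unnecessary—\pref{thm:pad} applies at every $\Delta>0$, so you may take $\Delta=\e/2$ directly and \pref{lem:padsep} preserves the scale—but this does not affect correctness.
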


\begin{proof} Let $r=\lfloor n/8k\rfloor$.  We may assume that $r \geq \theta_0 h^2 \log h$ since the bound $\lambda_k(G) \leq 2 d_{\max}(G)$ always holds. From the conjunction of \pref{cor:c2} and \pref{thm:klpt-duality}, we know there exists a conformal metric $\omega : V_G \to \R_+$ with $\|\omega\|_{\ell_2(V_G)}=1$ and such that \[
\e_r(G,\omega) \gtrsim \frac{1}{r^2} \sqrt{\frac{r^5}{d_{\max} h^2 \log h |V_{\hat G}|}}\,. %
\] From \pref{thm:pad}, we know that $(V_G,\dist_{\omega})$ admits an $(\alpha,\Delta)$-padded partition for every $\Delta > 0$ with $\alpha \leq O(h^2)$.  Now applying \pref{thm:klpt-evs} yields the claimed eigenvalue bound. \end{proof}

\subsection{Weighted separators}

Throughout the paper, we have equipped graphs with the uniform measure over their vertices. There are natural extensions to the setting where a graph $G$ is equipped with a non-negative measure on vertices $\mu : V_G \to \R_+$. The corresponding definitions naturally replace $L^p(V_G)$ by the weighted space $L^p(V_G,\mu)$.  The methods of \pref{sec:separators} and \pref{sec:flows} extend in a straightforward way to this setting (see \cite{FHL08} and, in particular, Section 3.6 there for extensions to a more general setting with pairs of weights).

As an illustration, we state a weighted version of \pref{thm:main}. Suppose that $\mu$ is a probability measure on $V_G$. A $\frac23$-balanced separator in $(G,\mu)$ is a subset of nodes $S \subseteq V_G$ such that every connected component of $G[V \setminus S]$ has $\mu$-measure at most $\frac23$.

\begin{theorem} If $G \in \rig(G_0)$ and $G_0$ excludes $K_h$ as a minor, then for any probability measure $\mu$ on $V_G$, there is a $\frac23$-balanced separator of weight at most $c_h \frac{\sqrt{m}}{n}$, where $m=|E_G|$ and $n=|V_G|$. One has the estimate $c_h \leq O(h^3 \log h)$. \end{theorem}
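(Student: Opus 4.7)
The plan is to follow the proof of \pref{thm:main} step-by-step, replacing each unweighted quantity by its natural $\mu$-weighted analogue. Define
\[
\phi_G^\mu = \min_{\mu(U^\circ) \leq \mu(V_G)/2} \frac{\mu(\partial U)}{\mu(U)}, \qquad \cspread_1^\mu(G) = \sup_{\|\omega\|_{L^1(V_G,\mu)} \leq 1}\ \sum_{u,v \in V_G} \mu(u)\mu(v)\,\dist_\omega(u,v),
\]
and $\csobs^\mu(G)$ analogously, with $\|\omega\|_{L^1(V_G,\mu)} = \sum_v \mu(v)\omega(v)/\mu(V_G)$ and $\mu \otimes \mu$ pair-averaging in the Lipschitz supremum. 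A $\mu$-weighted variant of \pref{lem:chomp}---obtained by the same recursion but using the weighted product-of-measures inequality $\mu(\partial U) \geq \phi_G^\mu \cdot \mu(A)\mu(B)/\mu(V_G)$ in place of \eqref{eq:conductance}---reduces the theorem to showing $\phi_H^\mu \lesssim (h^3 \log h) \sqrt{m}/(n \cdot \mu(V_H))$ for every induced subgraph $H$ of $G$, where $\mu$ is understood restricted to $V_H$.

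First I will prove the weighted FHL inequality $\phi_G^\mu \asymp 1/\csobs^\mu(G)$. The proof of \pref{thm:fhl} transfers line-by-line: the two cut-function constructions remain valid with $\mu$-weights in place of cardinalities, and the level-set integration hinges on the Fubini-type identity $\int_{\R} \mu(S_\theta)\,d\theta \leq \mu(V_G)\|\omega\|_{L^1(V_G,\mu)}$, which is the same manipulation. The weighted embedding inequality $\cspread_1^\mu(G) \lesssim h^2 \csobs^\mu(G)$ is then immediate: \pref{thm:pad} and the $(\alpha,\Delta)$-random separator it produces are purely metric statements about $(V_G, \dist_\omega)$ and depend on no vertex measure, so only the Rabinovich rounding \pref{lem:rab} needs to be reread with $\mu\otimes\mu$-averaging, which is again immediate.

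Finally, the weighted spread lower bound $\cspread_1^\mu(G) \gtrsim \mu(V_G)/(h\sqrt{m\log h})$ follows from weighted LP duality combined with the crossing-congestion argument of \pref{sec:flows}. The duality \pref{thm:duality} generalizes via $L^1(V_G,\mu)$-$L^\infty(V_G,\mu)$ H\"older pairing to a weighted vertex congestion $\vcong_\infty^\mu$ for flows routing $\mu(u)\mu(v)$ demand between every pair of vertices, which is exactly the $(H,\mathfrak{D})$-flow framework of \pref{sec:eigenvalues} specialized to $H = K_n$ with $\mathfrak{D}_\mu(\{u,v\}) = \mu(u)\mu(v)$. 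The charging in the proof of \pref{thm:mat} is measure-agnostic and produces $\cross_G^*(K_n, \mathfrak{D}_\mu) \leq O(m+n)\cdot \vcong_\infty^\mu(\hat G)^2$; pairing this with the weighted crossing lower bound supplied by \pref{thm:klpt-main} (or the proof of \pref{thm:blr} adapted to $(K_n,\mathfrak{D}_\mu)$-flows) closes the chain.

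The main obstacle is bookkeeping rather than any new idea: the factor $\mu(V_G)$ must be threaded correctly through both the LP duality and the crossing manipulations so that, after invoking the weighted chomping lemma, the final separator weight bound depends on $\mu$ only through the $1/n$ prefactor. Since the $(H,\mathfrak{D})$-flow apparatus is already in place for \pref{thm:eigenvalues}, and padded partitions as well as the careful-minor construction of \pref{sec:careful} are oblivious to the vertex measure, no new technique needs to be developed.
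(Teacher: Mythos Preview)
The paper does not actually prove this theorem; it simply states that ``the methods of \pref{sec:separators} and \pref{sec:flows} extend in a straightforward way to this setting'' and points to \cite{FHL08} for details. Your proposal is precisely an outline of that extension---replacing uniform averages by $\mu$-averages everywhere and observing that the random-separator/padded-partition machinery of \pref{sec:careful} is measure-free---so it is essentially the same approach the paper asserts.

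One point deserves more care than you give it. Neither \pref{thm:blr} nor \pref{thm:klpt-main} as stated yields the crossing lower bound you need: \pref{thm:blr} is for unweighted $K_N$-flows, and \pref{thm:klpt-main} requires the demand measure to be supported on $r$-subsets with $r \gtrsim h^2\log h$, whereas your $\mathfrak{D}_\mu(\{u,v\})=\mu(u)\mu(v)$ corresponds to $r=2$. The clean fix is a discretization: approximate $\mu$ by a rational measure $\mu(v)=k_v/N$, regard each $v$ as $k_v$ collocated copies so that routing $\mu(u)\mu(v)$ demand between $u$ and $v$ in $\hat G$ is $N^{-2}$ times an unweighted $K_N$-flow in $\hat G$ (with $\f$ mapping all $k_v$ copies to $v$), and then invoke \pref{thm:blr} directly. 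With this in hand your chain closes; the remaining normalization bookkeeping (your stated bound $\cspread_1^\mu(\hat G)\gtrsim \mu(V_{\hat G})/(h\sqrt{m\log h})$ is off by a factor of $n$ relative to the uniform case, for instance) is genuinely just bookkeeping.
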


\subsection{Bi-Lipschitz embedding problems}

We state two interesting open metric embedding problems. We state them here only for string graphs, but the extension to rigs over $K_h$-minor-free graphs is straightforward.

\medskip \noindent {\bf Random embeddings into planar graphs.} Let $G$ be a graph and consider a random variable $(\bm{F},\bm{G}_0, \bm{\len})$, where $\bm{F} : V_G \to V_{\bm{G}_0}$, $\bm{G}_0$ is a (random) planar graph, and $\bm{\len} : E_{\bm{G}_0} \to \R_+$ is an assignment of lengths to the edges of $\bm{G}_0$. We use $\dist_{(\bm{G}_0, \bm{\len})}$ denote the induced shortest-path distance in $\bm{G}_0$.

\begin{question}\label{ques:random-embeddings} Is there a constant $K > 0$ so that the following holds for every finite string graph $G$? For every $\omega : V_G \to \R_+$, there exists a triple $(\bm{F},\bm{G}_0, \bm{\len})$ such that: \begin{enumerate} \item (Non-contracting) Almost surely, for every $u,v \in V_G$, \[\dist_{(\bm{G}_0, \bm{\len})}(\bm{F}(u),\bm{F}(v)) \geq
    \dist_{\omega}(u,v)\,.\]
\item (Lipschitz in expectation) For every $u,v \in V_G$, \[ \E\left[\dist_{(\bm{G}_0, \bm{\len})}(\bm{F}(u),\bm{F}(v))\right] \leq K \cdot \dist_{\omega}(u,v)\,. \] \end{enumerate} \end{question}

A positive answer would clarify the geometry of the conformal metrics on string graphs. In \cite{CJLV08}, the lower bound method of \cite{GNRS99} is generalized to rule out the existence of non-trivial reductions in the topology of graphs under random embeddings of the above form. But that method relies on the initial family of graphs being closed under $2$-sums, a property which is manifestly violated for string graphs (since, in particular, string graphs are not closed under subdivision).

\medskip \noindent {\bf Bi-Lipschitz embeddings into $L^1$.} A well-known open question is whether every planar graph metric admits an embedding into $L^1([0,1])$ with bi-Lipschitz distortion at most $C$ (for some universal constant $C$); see \cite{GNRS99} for a discussion of the conjecture and its extension to general excluded-minor families. The following generalization is also natural.

\begin{question}\label{ques:L1strings} Do conformal string metrics admit bi-Lipschitz embeddings into $L^1$? More precisely, is there a constant $K > 0$ such that the following holds for every string graph $G$? For every $\omega : V_G \to \R_+$, there is a mapping $\f : V_G \to L^1([0,1])$ such that for all $u,v \in V_G$, \[ \dist_{\omega}(u,v) \leq \|\f(u)-\f(v)\|_{L^1} \leq K \cdot \dist_{\omega}(u,v)\,. \] \end{question}

Note that, unlike in the case of edge-capacitated flows, a positive resolution does not imply an $O(1)$ vertex-capacitated multi-flow min/cut theorem for string graphs.  See \cite{FHL08} for a discussion and \cite{LMM15} for stronger types of embeddings that do yield this implication.
If \pref{ques:random-embeddings} has a positive resolution, it implies that \pref{ques:L1strings} is equivalent to the same question for planar graphs.

\subsection*{Acknowledgements}

The author thanks Noga Alon, Nati Linial, and Laci Lov\'asz for helpful discussions, Janos Pach for emphasizing Jirka's near-optimal bound for separators in string graphs, and the organizers of the ``Mathematics of Ji{\v{r}}\'i Matou{\v{s}}ek'' conference, where this work was initiated.

\bibliographystyle{alpha} \bibliography{strings}

\end{document}